
\RequirePackage{ifthen}
\newboolean{mpc}
\setboolean{mpc}{false}

\ifthenelse {\boolean{mpc}}
{
\documentclass[smallextended]{svjour3}
\smartqed  
} {
\documentclass[10pt]{article}
\usepackage[margin=1.35in]{geometry}
}

\usepackage{graphicx}
\usepackage{amsmath}
\usepackage{amssymb}
\usepackage{bm}
\usepackage{hyperref}
\usepackage{color}
\usepackage{multirow}
\usepackage{mathrsfs} 
\usepackage{soul} 
\usepackage{stmaryrd} 
\usepackage[makeroom]{cancel} 
\usepackage{braket,amsfonts}
\usepackage{array}
\usepackage{pgfplots}
\usepackage{algorithmic}
\usepackage{epstopdf}
\usepackage{amsopn}
\usepackage{mathtools}
\usepackage{comment}
\usepackage{tikz}
\usepackage{accents}
\usepackage{longtable}
\usepackage[thinc]{esdiff}
\usepackage{pdflscape}
\usepackage{hyperref}
\usepackage{caption}
\usepackage{subcaption}
\usepackage{lineno}
\usepackage{comment}
\usepackage{accents}
\usepackage{dsfont}
\usepackage{multirow}
\usepackage{booktabs}
\usepackage[linesnumbered,ruled,vlined]{algorithm2e}
\newcommand{\fnm}[1]{#1}
\newcommand{\sur}[1]{#1}

\makeatletter
\let\cl@chapter\undefined
\makeatletter

\usepackage[capitalize,noabbrev]{cleveref}

\ifthenelse {\boolean{mpc}}
{

} {
\usepackage[round]{natbib}
\usepackage{amsthm}
\newtheorem{theorem}{Theorem}
\newtheorem{proposition}{Proposition}
\newtheorem{lemma}{Lemma}

\newtheorem{remark}{Remark}

\newtheorem{definition}{Definition}

\newtheorem{example}{Example}
\Crefname{section}{Sect.}{Sects.}
\Crefname{proposition}{Prop.}{Props.}
\Crefname{theorem}{Thm.}{Thms.}
\Crefname{definition}{Defn.}{Defns.}
\Crefname{corollary}{Cor.}{Cors.}
\Crefname{figure}{Fig.}{Figs.}
\Crefname{observation}{Obs}{Obss.}
\Crefname{chapter}{Chap.}{Chaps.}
\Crefname{table}{Tab.}{Tabs.}
}

\newcommand{\cP}{\mathcal P}

\newcommand{\cE}{\mathcal E}
\newcommand{\cY}{\mathcal Y}
\newcommand{\R}{\mathbb R}
\newcommand{\F}{\mathbb F}
\newcommand{\C}{\mathbb C}

\newcommand{\cZ}{\mathcal Z}

\newcommand{\fS}{\mathfrak S}
\newcommand{\fZ}{\mathfrak Z}
\newcommand{\cF}{\mathcal F}

\newcommand{\cC}{\mathcal C}

\newcommand{\cM}{\mathcal M}

\newcommand{\orgname}[1]{#1}%
\newcommand{\orgaddress}[1]{#1}%
\newcommand{\postcode}[1]{#1}%
\newcommand{\city}[1]{#1}%
\newcommand{\country}[1]{#1}%

\DeclareMathOperator{\conv}{conv}

\DeclareMathOperator{\sep}{Sep}

\DeclareMathOperator{\dmat}{\mathbb{D}}

\DeclareMathOperator{\SOST}{PSDT}

\newcommand{\X}{\mathbf{X}}
\newcommand{\Z}{\mathbf{Z}}

\renewcommand{\X}{\mathbf{X}}
\newcommand{\x}{\mathbf{x}}
\newcommand{\y}{\mathbf{y}}
\newcommand{\p}{\mathbf{p}}

\newcommand{\z}{\mathbf{z}}
\renewcommand{\u}{\mathbf{u}}
\renewcommand{\v}{\mathbf{v}}
\newcommand{\U}{\mathbf{U}}
\newcommand{\V}{\mathbf{V}}
\newcommand{\bc}{\mathbf{c}}

\newcommand{\Id}{\mathbf{Id}}

\newcommand{\vi}{\mathbf{i}}

\renewcommand{\a}{\mathbf{a}}


\newcommand{\opt}{\text{opt}}
\newcommand{\heur}{\text{heur}}
\newcommand{\relx}{\text{relax}}

\newcommand{\lb}{\text{lb}}
\newcommand{\ub}{\text{ub}}
\newcommand{\feas}{\text{feas}}

\newcommand{\NPh}{\textsf{NP}-hard}
\DeclareMathOperator{\herm}{\mathbb{S}}
\DeclareMathOperator{\ext}{ext}
\DeclareMathOperator{\tang}{T}
\DeclareMathOperator{\argmin}{argmin}
\DeclareMathOperator{\dps}{DPS}

\DeclareMathOperator{\tr}{Tr}

\newcommand{\Tr}{\mathsf{T}}

\DeclareMathOperator{\mtens}{\otimes}
\DeclareMathOperator{\bmtens}{\bigotimes}
\newcommand{\inner}[2]{\langle #1, #2 \rangle}

\newcommand{\norm}[1]{\lVert#1\rVert}
\newcommand{\vc}[1]{\mathbf{#1}}
\newcommand{\vd}{\vc{d}}

\newcommand{\deq}{\coloneqq}

\newcommand{\eg}{e.g.,~}

\usepackage{ulem}

\newcommand{\kywrds}{Tensor Optimization \and Cutting-Plane Methods \and Branch-and-Bound Algorithms \and Quantum Entanglement \and Convex Relaxations}

\title{Convex semidefinite tensor optimization and quantum entanglement}

\ifthenelse {\boolean{mpc}}
{
\titlerunning{Convex semidefinite tensor optimization and quantum entanglement}
\authorrunning{Xu et al.}

\author{\fnm{Liding} \sur{Xu} \and \fnm{Ye-Chao} \sur{Liu} \and \fnm{Sebastian} \sur{Pokutta}}


\institute{
   \fnm{Liding} \sur{Xu},  \fnm{Ye-Chao} \sur{Liu}, \fnm{Sebastian} \sur{Pokutta} \at
 \orgname{Zuse Institute Berlin}, \orgaddress{\city{Berlin}, \postcode{14195}, \country{Germany}} \\
 \email{lidingxu.ac@gmail.com, yechaoliu1994@outlook.com, pokutta@zib.de}
}

}
{
\author{\fnm{Liding} \sur{Xu} \and \fnm{Ye-Chao} \sur{Liu} \and \fnm{Sebastian} \sur{Pokutta}
 \thanks{
    \orgname{Zuse Institute Berlin, Berlin, Germany.
E-mail: {lidingxu.ac@gmail.com, yechaoliu1994@outlook.com, pokutta@zib.de}}
 }}
}

\begin{document}

\maketitle


\begin{abstract}
 The cone of positive-semidefinite (PSD) matrices is fundamental in convex optimization, and we extend this notion to tensors, defining PSD tensors, which correspond to separable  quantum states. We study the convex optimization problem over the PSD tensor cone. While this convex cone admits a smooth reparameterization through tensor factorizations (analogous to the matrix case), it is not self-dual. Moreover, there are currently no efficient algorithms for projecting onto or testing membership in this cone, and the semidefinite tensor optimization problem, although convex, is NP-hard. To address these challenges, we develop methods for computing lower and upper bounds on the optimal value of the problem. We propose a general-purpose iterative refinement algorithm that combines a lifted alternating direction method of multipliers with a cutting-plane approach. This algorithm exploits PSD tensor factorizations to produce heuristic solutions and refine the solutions using cutting planes. Since the method requires a linear minimization oracle over PSD tensors, we design a spatial branch-and-bound algorithm based on convex relaxations and valid inequalities. Our framework allows us to study the white-noise mixing threshold, which characterizes the entanglement properties of quantum states. Numerical experiments on benchmark instances demonstrate the effectiveness of the proposed methods.
\ifthenelse {\boolean{mpc}}
{
\keywords{\kywrds}
\subclass{90C26 \and 90C22 \and 90C25 \and 52B55 \and 90C11}
}{
\\
\textit{Key words:} \kywrds
}
\end{abstract}

\section{Introduction}

In quantum information science, a (quantum) state is represented by a density matrix, which is a Hermitian, positive-semidefinite (PSD) matrix with unit trace. The density matrix of a pure state has rank one. A pure state of a multipartite system is separable if and only if its density matrix can be written as a tensor product of rank-one density matrices corresponding to pure states of the individual subsystems.  A mixed (non-pure) state is separable if its density matrix is a convex combination of density matrices of separable pure states. States that are not separable are called entangled. Detecting entanglement in mixed states is hard; however, it is central to quantum information science and impacts many areas of physics \cite{Eisert2010colloqium,Nishioka2018Entanlgement}.

Mathematically, separability of (quantum) states is closely related to decompositions of tensors. Canonical polyadic decomposition (CPD) is the most well-known tensor decomposition. Given a vector \( \vd = (d_1, \dots, d_m) \), an order-$m$ tensor $\y \in \F^{d_1 \times \cdots \times d_m}$ admits  a CPD, if $\y =  \sum_{i \in [r]} \lambda_i \x_{i1} \mtens \cdots \mtens \x_{im}$,
where $\F$ is the field $\R$ of real numbers or the field $\C$ of complex numbers. The minimum $r$ in CPD of $\y$ is its CPD rank.
If all \textit{modes} $\x_{ij}$ ($j \in [m]$) in each \textit{component} $\x_{i1} \mtens  \cdots  \mtens \x_{im}$  are identical to a vector $\x_{i}$ and all $d_j = d$, then CPD is symmetric (under permutation of indices).

Separable states are tensors that have more restrictive decompositions.  We define the following cone of (order-$2m$) PSD  tensors:
\begin{equation}
    \label{eq.psdt}
    \SOST_{\F}(\vd) \deq \{ \y \in \F^{d_1 \times d_1 \cdots \times d_m \times d_m} \mid \y = \sum_{i \in [r]}  \x_{i1} \mtens \x_{i1}^\dagger \cdots \x_{im} \mtens \x_{im}^\dagger  \},
\end{equation}
where $\x^\dagger$ denotes the conjugate transpose of $\x$.
Any tensor in $\SOST_{\F}(\vd)$, when reshaped as an order-2 tensor, yields a large PSD matrix of size $\bar{d}^2$ with $\bar{d} = \prod_{j \in [m]}d_j$. The set  $\sep(\vd)$ of separable states consists of  all   unit-trace reshaped  tensors in $\SOST_{\C}(\vd)$:
\begin{equation}
    \label{eq.sepstate}
    \sep(\vd) \deq \{ \y \in \SOST_{\C}(\vd)  \mid \tr(\y) = 1 \}.
\end{equation}

Identification of separable states is difficult. For \textit{unipartite} systems ($m=1$), the set of separable states is simply the set of density matrices. For \textit{multipartite systems}, however, separable states form a strict subset of density matrices. In addition, the set \( \sep(\vd) \) is \textit{computationally intractable}: there are no known efficient algorithms for separation, projection, linear minimization, or even membership testing. The membership testing problem is already strongly \NPh{} even for bipartite systems ($m=2$)~\cite{gharibian2008strong}. The linear minimization problem $\min_{\y \in \sep(\vd)} \inner{\chi}{\y}$, known as the \textit{best separable state (BSS)} problem, is also \NPh{}~\cite{fang2021sum}.

We consider a generalization of semidefinite programming (SDP): \textit{semidefinite tensor optimization (SDTO)}, a conic optimization problem over the cone of PSD tensors. The SDTO can be written as:
\begin{equation}
    \label{eq:P}\tag{SDTO}
   \opt \deq \min \{\inner{\bc}{\z}: \z \in \cZ,\;  A(\z) + \a \in  \SOST_{\F}(\vd)\},
\end{equation}
where $\cZ$ is a tractable convex cone, $A$ is a linear map, $\a$ is a constant tensor. The intractability of $\sep(\vd)$ implies that SDTO is also \NPh{}. When $m=1$ and $\cZ$ is a PSD cone, \eqref{eq:P} becomes an SDP, for which off-the-shelf solvers are available. However, algorithms for tackling the general problem \eqref{eq:P} are scarce. We adapt established SDP strategies and address the nonconvexity introduced by PSD tensor factorizations.

Our first contribution is an iterative refinement (IR) algorithm for \eqref{eq:P}. IR alternates between a heuristic stage -- where a lifted alternating direction method of multipliers (LADMM) solves a factorized nonconvex reformulation to find heuristic solutions -- and a refinement stage that tightens and certifies bounds via a cutting-plane (CP) algorithm. LADMM and CP warm-start one another: LADMM outputs accelerate CP, while CP polishes the accuracy of heuristic solutions and provides numerically reliable bounds on $\opt$. The IR algorithm produces a non-increasing sequence of upper bounds, and it can also provide lower bounds and certified optima.

Our second contribution is a spatial branch-and-bound (sBB) algorithm that serves as a linear minimization oracle (LMO) inside the CP algorithm. The convex relaxations in the sBB algorithm are constructed from the intersections of the following building blocks:
\begin{itemize}
    \item Factorizable DPS constraints obtained by combining the Doherty--Parrilo--Spedalieri (DPS) convex outer approximations of separable bipartite states   \cite{doherty2004complete} with McCormick factorization for multilinear monomials~\cite{al1983jointly};
    \item SDP McCormick inequalities generated via Kronecker product of two matrix constraints  ~\cite{anstreicher2017kronecker};
    \item LP McCormick inequalities that support efficient branching rules.
\end{itemize}
We show that these convex sets and inequalities can be derived from a \textit{unified framework} that generalizes the classical \textit{Reformulation--Linearization Technique (RLT)}~\cite{sherali1992global} for polynomial optimization. Moreover, the convex outer approximations of separable states  also give a convex relaxation of SDTO.

We evaluate the proposed algorithms and existing methods   on a benchmark problem: determining the white-noise mixing threshold  of an entangled state—that is, the minimal amount of white-noise admixture required to make the state separable, or equivalently, its random robustness \cite{Vidal1999robustness} under a complete depolarizing channel:
\begin{equation}
    \label{eq.thre} \tag{Threshold}
    S(\phi\|\Id) =  \min \{\z \ge 0: \rho(\z; \phi,\Id) \deq (\Id / \bar{d}-\phi)\z +  \phi \in \sep(\vd)\},
\end{equation}
where the normalized  identity matrix $\Id/ \bar{d}$ represents the  covariance of the white noise. The interpolated state $\rho(\z; \phi, \Id)$ lies on the line segment joining $\Id/\bar{d}$ and $\phi$. The threshold value indicates the separability/entanglement of $\phi$.

\subsection{Related work}
The problem of certifying entanglement or separability has been studied through several complementary approaches. A well-known example is the positive partial transpose (PPT) criterion~\cite{Horodecki1997separability,peres1996separability}, which is necessary and sufficient for bipartite systems of $\vd=(2,2)$ or $(2,3)$, but only necessary in higher dimensions.
The DPS hierarchy \cite{doherty2004complete,Navascues2009complete} extends the PPT criterion and provides nested convex outer approximations of separable states for bipartite systems. From the dual perspective, the DPS hierarchy corresponds to a sum-of-squares hierarchy for a homogeneous Hermitian polynomial over spheres \cite{fang2021sum}. Although the DPS constraints admit physical interpretations, the hierarchy lacks finite-convergence guarantees \cite{fawzi2021set}.  The DPS constraints are primarily used as entanglement certification (YES answers to entanglement). In addition, the DPS  hierarchy can produce lower bounds on white-noise mixing thresholds for bipartite states; see an implementation in Ket.jl~\cite{araujo_2025_15837771}.

The inner approximations of separable states and alternating SDP-based refinement are combined to find  separability certification (NO-answer to entanglement) \cite{Ohst2024certifying} and upper bounds on  white-noise mixing thresholds. As an alternative, the geometric reconstruction (GR) method \cite{Shang2018convex} finds a separability certification for a target state $\phi$  expressed as a convex combination of two separable states $\rho_c$ and $\rho_x$. In the trial step, the GR method performs a line search to generate an extrapolated state~$\rho(\z; \phi, \Id)$ with $\z < 0$. It then applies the conditional-gradient (CG) method (a.k.a. the Frank-Wolfe method) \cite{fwbook,Frank1956algorithm,pokutta2024frank} to minimize the distance from the extrapolated state to the separable set $\sep(\vd)$. The pure separable state returned by the last call of the LMO in this CG procedure becomes $\rho_c$. In the reconstruction step, the method forms a parallelogram with vertices $\rho_x$, $\Id/\bar{d}$, $\rho_c$, and the extrapolated state $\rho(\z; \phi, \Id)$, such that the target state $\phi$ lies on the line segment joining $\rho_c$ and $\rho_x$ \cite{Hu_2023_algorithm}. Finally, the method verifies $\rho_x$ inside an inner approximation ball of $\sep(\vd)$, which implies the separability of $\phi$.

Recently, a primal-dual GR method has been proposed \cite{liu2025unified} to construct upper and lower bounds on the white-noise mixing threshold. This method first applies the CG algorithm to obtain an upper approximation of the distance from the target state~$\phi$ to the separable set $\sep(\vd)$. It then uses the gap estimation from \cite{thuerck2023learning}, together with a simplicial-partition-based LMO equipped with error bounds, to simultaneously find a lower bound on this distance. Whenever the lower bound is strictly positive, the CG-based entanglement detection yields a entanglement certification for the state $\phi$.

Positive-semidefinite and positive-definite tensors were introduced for symmetric tensors in \cite{qi2013symmetric} (via eigenvalue-based definitions) and extended to non-symmetric tensors in \cite{hu2013determinants}. These notions differ from the factorization-based definition of PSD tensors, which is closely related to CPD; see \cite{hitchcock1927expression,hitchcock1928multiple,kolda2009tensor}. Most tensor problems are \NPh{} in the worst case \cite{hillar2013}. It is believed that CPD exhibits a computational-to-statistical gap \cite{bandeira2018}, with efficient algorithms for symmetric CPD expected to exist for random tensors with ranks up to $\mathcal{O}(d^{m/2})$. Notable CPD algorithms include Jennrich's simultaneous diagonalization \cite{de2004computation,leurgans1993decomposition}, alternating least squares \cite{leurgans1993decomposition}, the tensor power method \cite{de2000best}, and optimization-based approaches \cite{sorber2013optimization}. The tensor power method reconstructs components by solving subproblems of the form \(\max\inner{T}{\bmtens_{j\in[m]}\x}\). For symmetric CPD with orthogonal components, local minima of the component subproblem are provably close to global minima \cite{ge2015escaping}.

Manifold optimization \cite{absil2009optimization,boumal2023introduction} is popular for problems with smooth reparameterizations and has been applied to CPD \cite{breiding2018riemannian,dong2022new}.  Reparameterization alone does not guarantee global optimality of stationary points \cite{levin2025effect}. When
$m=1$, the symmetric CPD reduces to the eigenvalue decomposition, which is commonly used to project a matrix onto the PSD cone. In the Burer-Monteiro factorization for SDP \cite{burer2003nonlinear}, the costly projection is avoided by optimizing over low-rank factors, resulting in a nonconvex problem. Under certain equality constraints, all local minima of the factorized formulation are global \cite{burer2005local}; this result can be extended to nonlinear objectives and some structured constraints \cite{journee2010low}. The augmented Lagrangian method (ALM) is a standard tool for constrained optimization \cite{bertsekas2014constrained} and can handle SDP constraints that are not manifold-reparameterizable \cite{dong2022new,wang2023decomposition,wang2025solving}. The alternating direction method of multipliers (ADMM) is widely used for convex problems with partially separable structure \cite{boyd2011distributed}.

\subsection{Notation}
We denote $\herm^d$ as the space of real symmetric (or complex Hermitian) matrices of size $d \times d$, $\herm^d_+$ as the set of PSD matrices of the same size, and $\dmat^d$ as the set of density matrices of the same size. We let $\Id$ denote the identity matrix whose dimension is determined according to the context. For a vector $\vd =(d_1,\dots,d_m)$, we let $d_{[h]}$ denote the product $\prod_{k \in[h]}d_k$, and denote $d_{[m]}$ by $\bar{d}$.

\subsection{Organization of the paper}
In \Cref{sec.approx}, we discuss smooth parameterizations and duality for SDTO and describe methods for computing lower and upper bounds. \Cref{sec.algo} presents algorithms for SDTO, including LADMM for nonconvex lifted problems and the IR algorithm that combines LADMM with the CP method. In \Cref{sec.sbb} we develop an sBB algorithm to serve as the linear-minimization oracle for the CP method, where we detail convex relaxations, valid inequalities, branching rules, and heuristics. \Cref{sec.numerical} reports numerical experiments on white-noise mixing threshold problems, compares our methods with state-of-the-art  approaches in computational physics, and analyzes the computational results.

\section{Reformulations and relaxations of SDTO}
\label{sec.approx}
In this section, we consider the optimization problem \eqref{eq:P}. As discussed in the introduction, $\cZ$ is tractable but $\SOST_{\F}(\vd)$ is not. Unlike classical SDPs, only lower and upper bounds for \eqref{eq:P} are generally computable. We discuss the problem with the following running example.

\begin{example}
    \label{exp.ref1}
    Recall the white-noise mixing threshold problem \eqref{eq.thre}.
    This is an instance of \eqref{eq:P} with $\inner{\bc}{\z} = \z$, $\cZ = \R_+$, $A(\z) = (\Id / \bar{d}-\phi )\z$, $\a = \phi$, and $\SOST_{\F}(\vd) = \SOST_{\C}(\vd)$. Note that $\tr(A(\z) + \a) = \tr(\a)$ is always equal to one, and this implies that $A(\z) + \a \in \sep(\vd)$. We may view $S(\phi\|\Id)$ as a line-search problem over a convex set. In principle, since $\sep(\vd)$ is convex, we could apply a line-search method. In this work, we formulate it as an SDTO problem and solve it via more general-purpose algorithms.
\end{example}

\subsection{Parameterization and duality}

We discuss a smooth parameterization of $\SOST_{\F}(\vd)$. Recall that the factorization size $r$ is the number of components in the tensor decomposition. We call $r$ a \textit{compact} factorization size if every tensor in $\SOST_{\F}(\vd)$ admits a decomposition with at most $r$ components. We have the following loose upper bound on the compact factorization size.
\begin{proposition}
    \label{prop.compact}
    If $\dim\bigl(\SOST_{\F}(\vd)\bigr)=D$, then there exists a compact factorization size $r\le 2D+1$.
    \end{proposition}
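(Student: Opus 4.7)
The plan is to view $\SOST_\F(\vd)$ as the conic hull of the set of rank-one PSD generators
\[
K \;=\; \{\x_1 \mtens \x_1^\dagger \cdots \x_m \mtens \x_m^\dagger : \x_j \in \F^{d_j},\ j \in [m]\},
\]
so that proving the bound reduces to a Carathéodory-type argument for $\cone(K)$. Since the real span of $\SOST_\F(\vd)$ has dimension $D$, the generators in $K$ sit in a $D$-dimensional real ambient space, and the question reduces to bounding the number of elements of $K$ needed to represent an arbitrary conic combination.

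The key preliminary step is the scalar-absorption identity: for any $\lambda \ge 0$,
\[
\lambda\,(\x_1 \mtens \x_1^\dagger \cdots \x_m \mtens \x_m^\dagger) \;=\; (\sqrt{\lambda}\,\x_1) \mtens (\sqrt{\lambda}\,\x_1)^\dagger \cdots \x_m \mtens \x_m^\dagger,
\]
which lies in $K$, using $(c\x)\mtens(c\x)^\dagger = |c|^2\,\x\mtens\x^\dagger$ in both the real and the complex case. Consequently, any nonnegative combination $\sum_{i=1}^{s} \lambda_i g_i$ with $g_i \in K$ can be rewritten as an unweighted sum $\sum_{i=1}^{s} g'_i$ with $g'_i \in K$, which is precisely the decomposition format in the definition of $\SOST_\F(\vd)$.

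With this reparameterization in hand, I would invoke Carathéodory's theorem for conic hulls: every point in $\cone(K)$ lying inside a $D$-dimensional real space can be written as a nonnegative combination of at most $D$ elements of $K$. Combining this with scalar absorption yields for every $\y \in \SOST_\F(\vd)$ a decomposition with at most $D$ (hence at most $2D+1$) components, which establishes the existence of the compact factorization size. The main (minor) obstacle is purely bookkeeping: one must verify that Carathéodory's bound transfers cleanly to the multilinear decomposition format, which hinges on the scalar-absorption identity and the fact that the generators of $\SOST_\F(\vd)$ are closed under nonnegative scaling.
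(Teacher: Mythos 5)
Your proof is correct and follows essentially the same Carath\'eodory-type dimension count as the paper's; the only differences are that you invoke the conic version of Carath\'eodory (yielding at most $D$, or $2D$ if $D$ counts complex dimensions, generators) whereas the paper realifies to $\R^{2D}$ and applies the convex-hull version to get $2D+1$, and that you make the scalar-absorption step explicit where the paper leaves it implicit. Under either reading of $\dim$ your count stays within the claimed bound $r\le 2D+1$, so the argument goes through.
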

    \begin{proof}
        Carathéodory's theorem states that if a point $x$   lies in the convex hull $\conv(S)$ with $S \subset \R^{D}$, then  $x$ is a convex combination of at most $D+1$ extreme points (or generating points) in $S$. Take $S$ as $\SOST_{\F}(\vd)$, and realize the complex tensor as a real tensor, so we can embed $\SOST_{\F}(\vd)$ in $\R^{2D}$. Note that $\SOST_{\F}(\vd)$ is generated from rank-1 tensors. Thus, a tensor in $\SOST_{\F}(\vd)$ can be expressed as a sum of at most $2D+1$ rank-1 tensors.
    \end{proof}

   Definition \eqref{eq.psdt} yields the factorization-based parameterization of PSD tensors. We can construct a smooth surjective map $\Psi: \F^{r\bar{d}} \to \SOST_{\F}(\vd) \subseteq \herm^{\bar{d}}$ such that
   \begin{equation}
    \label{eq.lift}
    \x  = (\x_{11}, \dots, \x_{rm}) \mapsto \Psi(\x) = \sum_{i \in [r]}  \x_{i1} \mtens \x_{i1}^\dagger \cdots \x_{im} \mtens \x_{im}^\dagger,
   \end{equation}
   where $\x_{ij} $ is in the vector space $\F^{d_j}$ equipped with the standard (Hermitian) inner product, and the factorization size $r$ is compact. Recall that a matrix $\y$ is PSD (equivalently, $\y = [\u_1\dots \u_r] [\u_1\dots \u_r]^\top$), if $\v^\dagger \y \v = \inner{\y}{\v\mtens \v^\dagger} = \sum_{i \in [r]} (\u_i^\top \v)^2$ is nonnegative for any vector $\v$. Given  $\y \in \SOST_{\F}(\vd)$, for vectors $\v_{1}, \dots, \v_{m}$, $\inner{\y}{ \v_{1} \mtens \v_{1}^\dagger \cdots \v_{m} \mtens \v_{m}^\dagger}$ equals $\sum_{i \in [r]}(\x_{i1}^\top \v_{1})^2\cdots(\x_{im}^\top \v_{m})^2$, which is non-negative.  This shows that $\SOST_{\F}(\vd)$ is indeed a generalization of PSD matrices.

   The space $\F^{r\bar{d}}$ together with $\Psi$ is a so-called  lift of $\SOST_{\F}(\vd)$ in the context of manifold optimization \cite{levin2025effect}. Thus, any optimization problem involving $\SOST_{\F}(\vd)$ can be transformed into a lifted nonconvex optimization problem in the space $\F^{r\bar{d}}$. We will show that heuristic solutions for the lifted problem can be found by the LADMM in \Cref{sec.ladmm}.

We next consider the duality for \eqref{eq:P}.
Let $A^*$ be the adjoint of the linear map $A$, let $\cZ^*$ be the dual cone of $\cZ$, and let $\SOST^*_{\F}(\vd)$ be the dual cone of $\SOST_{\F}(\vd)$. Assume a standard regularity condition (e.g., Slater's condition) holds so that there is no duality gap between \eqref{eq:P} and its dual:
    \begin{equation}
   \label{eq:D} \tag{DSDTO}
\opt = \max\{\langle -\a,\y^*\rangle \mid \y^* \in \SOST^*_{\F}(\vd),\; -A^*(\y^*)+\bc\in\cZ^*\}.
\end{equation}

Unlike PSD cones, $\SOST_{\F}(\vd)$ is not self-dual.
\begin{proposition}
    The extreme rays of $\SOST_{\F}(\vd)$ are precisely the rank-one product tensors
    \[\{\y\mid\y=\x_1\mtens\x_1^\dagger\cdots\x_m\mtens\x_m^\dagger\}.\]
    The dual cone can be written as
    \[\SOST^*_{\F}(\vd)=\{\y^*\mid\langle\y^*,\y\rangle\ge0\ \forall\ \y\in\ext(\SOST_{\F}(\vd))\},\]
    and for $m\ge2$ this dual cone strictly contains $\SOST_{\F}(\vd)$.
\end{proposition}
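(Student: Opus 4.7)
The plan treats the three claims in sequence, relying throughout on the fact that by definition \eqref{eq.psdt} the cone $\SOST_{\F}(\vd)$ is the conic hull of the set $R = \{\x_1 \mtens \x_1^\dagger \cdots \x_m \mtens \x_m^\dagger : \x_j \in \F^{d_j}\}$ of rank-one product tensors. Since every element of the cone is already a conic combination of elements of $R$, every extreme ray must be generated by some element of $R$, and the core task is to show that each nonzero $\y \in R$ is itself extreme.

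For extremality I would reshape $\y = \x_1 \mtens \x_1^\dagger \cdots \x_m \mtens \x_m^\dagger$ to the rank-one PSD matrix $\x\x^\dagger$ with $\x = \x_1 \otimes \cdots \otimes \x_m$. If $\y = \y_1 + \y_2$ with $\y_i \in \SOST_{\F}(\vd)$, then reshaping yields a decomposition of a rank-one PSD matrix into two PSD summands; testing both summands against any vector orthogonal to $\x$ forces their ranges to lie in $\operatorname{span}(\x)$, so each is a nonnegative scalar multiple of $\x\x^\dagger$. Transporting back, $\y_i = t_i \y$ for some $t_i \ge 0$, proving extremality. The dual formula then follows from the standard identity $(\cone R)^* = \{\y^* : \inner{\y^*}{\r} \ge 0 \text{ for all } \r \in R\}$, applied with $R$ equal to $\ext(\SOST_{\F}(\vd))$ up to scaling.

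For the strict containment when $m \ge 2$ I would use the sandwich
\[
\SOST_{\F}(\vd) \;\subseteq\; \herm^{\bar d}_+ \;\subseteq\; \SOST^*_{\F}(\vd).
\]
The left inclusion follows from reshaping each generator of $\SOST_{\F}(\vd)$ into a rank-one PSD matrix. The right inclusion holds because the PSD cone is self-dual with respect to the Hilbert--Schmidt inner product, so any PSD matrix pairs nonnegatively with every reshaped generator of $\SOST_{\F}(\vd)$. It then suffices to exhibit a PSD state that is not separable: for $\vd = (2,2)$, the Bell state $\tfrac{1}{2}(\ket{00}+\ket{11})(\bra{00}+\bra{11})$ is PSD of unit trace, yet its partial transpose has a negative eigenvalue, so by the PPT criterion it lies outside $\SOST_{\F}(\vd)$ for either $\F = \R$ or $\F = \C$. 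Such a state belongs to $\SOST^*_{\F}(\vd) \setminus \SOST_{\F}(\vd)$; for $m > 2$ one can tensor this witness with fixed separable factors on the remaining subsystems to obtain an analogous example.

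The main obstacle is essentially notational: one must set up the reshape carefully so that rank-one product tensors correspond bijectively to rank-one PSD matrices of size $\bar d \times \bar d$, after which the extremality argument and the self-duality of the PSD cone do almost all of the work. The only genuinely substantive ingredient is invoking a known entangled PSD state, certified by PPT, to witness the strict containment when $m \ge 2$.
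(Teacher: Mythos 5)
Your proposal is correct and follows essentially the same route as the paper: extreme rays are identified with the rank-one product generators, the dual is characterized by testing against those generators, and strict containment for $m\ge 2$ is witnessed by an entangled PSD state. If anything, your reshaping argument (a rank-one PSD matrix admits only proportional PSD summands, proved via vectors orthogonal to $\x$) makes rigorous a step the paper merely asserts, and your explicit Bell-state/PPT witness instantiates the reference the paper makes to the example of Peres.
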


\begin{proof}
  We first show that every tensor of the form
  $
    \y = \x_{1} \mtens \x_{1}^\dagger \cdots \x_{m} \mtens \x_{m}^\dagger
$
  generates an extreme ray of $\SOST_{\F}(\vd)$.
  Suppose $\y$ can be decomposed as $\y = \y_1 + \y_2$ with $\y_1, \y_2 \in \SOST_{\F}(\vd)$.
  Since $\y$ is a rank-one PSD product tensor, any decomposition into PSD product terms must be proportional. Hence there exist $\alpha, \beta \ge 0$ such that $\y_1 = \alpha \y$ and $\y_2 = \beta \y$.
  This proves that $\y$ spans an extreme ray of $\SOST_{\F}(\vd)$. Conversely, let $\y \in \SOST_{\F}(\vd)$ be an extreme ray.
  By definition, $\y$ is a conic combination of elements of the form
  $\x_{1} \mtens \x_{1}^\dagger \cdots \x_{m} \mtens \x_{m}^\dagger$.
  If more than one such term appears with a positive coefficient, then $\y$ can be expressed as the sum of two non-collinear elements of the cone, contradicting extremality.
  Hence $\y$ must itself be of the form above.
  This establishes
$
    \ext(\SOST_{\F}(\vd)) \;=\; \Bigl\{\y \;\big|\; \y = \x_{1} \mtens \x_{1}^\dagger \cdots \x_{m} \mtens \x_{m}^\dagger \Bigr\}.
$
Next, consider the dual cone.
  By definition, $
    \SOST^*_{\F}(\vd) = \{\y^* \;\mid\; \inner{\y^\ast}{\y} \ge 0, \;\forall \y \in \SOST_{\F}(\vd)\}.
  $
  Since a closed convex cone is generated by its extreme rays, it suffices to test positivity against $\ext(\SOST_{\F}(\vd))$.
  Therefore,
  \[
    \SOST^*_{\F}(\vd) = \{\y^* \;\mid\; \inner{\y^\ast}{\y} \ge 0, \;\forall \y \in \ext(\SOST_{\F}(\vd))\}.
  \]
Finally, we show that inclusion is strict when $m \ge 2$.
  For $m=1$, $\SOST_{\F}(d)$ is the cone of PSD matrices, which is self-dual.
  For $m \ge 2$, however, $\SOST_{\F}(\vd)$ is the cone of separable tensors, while its dual consists of block-positive tensors (operators that are nonnegative on all product vectors).
  It is a standard fact that block-positivity strictly relaxes separability: there exist block-positive operators that are not separable; see the example in \cite{peres1996separability}.
\end{proof}

Unlike the cone of PSD matrices, $\SOST_{\F}(\vd)$ is not self-dual.
In quantum information theory, the element of  $\SOST^*_{\F}(\vd)$ is called \textit{entanglement witness}.
An entanglement witness $\y^*$ is an operator that is nonnegative on all separable states and negative on at least one entangled state, thereby certifying entanglement when detected.
They can be analytically constructed for arbitrary pure states~\cite{Bourennane2004experimental} but often fall short when dealing with general (mixed) entangled states~\cite{Bertlmann2005optimal,Bertlmann2002geometric,Pittenger2001convexity}, which is
\begin{equation}
    \y^* = \sigma - \rho + \inner{\sigma}{\rho - \sigma}  \Id\,,
\end{equation}
where $\rho$ is the target entangled state, and $\sigma$ is its closest separable state in $\sep(\vd)$ (normalization omitted).
The main difficulty lies in finding $\sigma$ (which can be as hard as solving SDTO) and parametrizing $\rho$.

\begin{remark}
    \label{rmk.pair}
    The map $\Psi(\x)$ of a point $\x$ in the lift   $\F^{r\bar{d}}$ is defined as the sum of rank-1 extreme rays;  any point in  $\SOST_{\F}(\vd)$ is the sum of its extreme rays. This pairing implies that one can freely convert a point in the lift to a finite subset of extreme rays, and vice versa.
\end{remark}

\subsection{Lower and upper bounds for SDTO}
We present methods for constructing lower and upper bounds for the problem \eqref{eq:P}.

    \begin{lemma}
Any feasible point $\y^*$ in \eqref{eq:D} provides a lower bound $\inner{-\a}{\y^*}$ for $\opt$. Any feasible point $\y$ in \eqref{eq:P} provides an upper bound $\inner{\bc}{\y}$ for $\opt$.
\end{lemma}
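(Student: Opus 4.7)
The plan is to prove this as the standard weak duality statement for the conic primal--dual pair \eqref{eq:P}--\eqref{eq:D}. The upper bound is immediate: since \eqref{eq:P} is a minimization problem, any feasible $\y$ (more precisely, any $\z \in \cZ$ with $A(\z)+\a \in \SOST_{\F}(\vd)$, with corresponding objective $\inner{\bc}{\z}$) bounds $\opt$ from above by the definition of infimum. So essentially all the work is in the lower bound.

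For the lower bound, I would pick an arbitrary primal feasible $\z$ and an arbitrary dual feasible $\y^*$, and directly compute
\[
\inner{\bc}{\z} - \inner{-\a}{\y^*} \;=\; \inner{\bc}{\z} + \inner{\a}{\y^*}.
\]
Inserting $\bc = A^*(\y^*) + \bigl(\bc - A^*(\y^*)\bigr)$ and using the adjoint identity $\inner{A^*(\y^*)}{\z} = \inner{\y^*}{A(\z)}$, this rearranges to
\[
\inner{\y^*}{A(\z)+\a} \;+\; \inner{\bc - A^*(\y^*)}{\z}.
\]
Both terms are non-negative: the first because $A(\z)+\a \in \SOST_{\F}(\vd)$ while $\y^* \in \SOST_{\F}^*(\vd)$, and the second because $\z \in \cZ$ while $\bc - A^*(\y^*) \in \cZ^*$. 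Therefore $\inner{\bc}{\z} \ge \inner{-\a}{\y^*}$, and taking the infimum over feasible $\z$ yields $\opt \ge \inner{-\a}{\y^*}$, which is the claimed lower bound.

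There is no substantive obstacle here -- this is the textbook conic weak duality argument applied to the specific cones at hand, and it does \emph{not} require the Slater-type regularity condition used earlier (that assumption was invoked only to guarantee strong duality / zero duality gap in the equation defining \eqref{eq:D}). The only minor care needed is in the complex case $\F=\C$: the Hermitian inner products $\inner{\y^*}{A(\z)+\a}$ and $\inner{\bc - A^*(\y^*)}{\z}$ must be verified to be real-valued and non-negative, which follows from the fact that the elements being paired lie in a closed convex cone and its dual defined inside the real vector space $\herm^{\bar d}$ (respectively, inside $\cZ$ and $\cZ^*$), so the pairings reduce to real inner products by construction. With this observation, the argument goes through verbatim in both the real and complex settings.
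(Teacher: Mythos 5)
Your proposal is correct and takes the same approach as the paper: the paper's own proof is a one-line appeal to weak duality for the primal--dual conic pair, and you simply supply the standard computation (splitting $\bc = A^*(\y^*) + (\bc - A^*(\y^*))$ and using nonnegativity of the two cone pairings) that the paper leaves implicit. Your added remarks -- that Slater's condition is not needed for this direction, and that the pairings are real-valued in the complex case -- are accurate and fill in details the paper omits.
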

\begin{proof}
    The feasible point in the dual problem gives a dual lower  bound, and the feasible point the primal problem gives a dual upper bound.
\end{proof}
We will use LADMM to find heuristic solutions for the nonconvex lifted problems,  where $\SOST_{\F}(\vd)$ is replaced by the smooth map $\Psi$ from a more tractable set $\F^{r\bar{d}}$. This heuristic method gives us a heuristic upper bound in the following senses.

Let $\x$ be an heuristic solution to the lifted problem of \eqref{eq:P}:
\begin{equation}
   \label{eq.LSDT} \tag{LSDTO}
    \min\{\inner{\bc}{\Psi(\x)} \mid \x \in \F^{r\bar{d}}   \;, \z \in \cZ \;,  A(\z) + \a  = \Psi(\x) \},
\end{equation} then $\ub_{\heur} = \inner{\bc}{\Psi(\x)}$ is a heuristic upper bound on $\opt$.

We present a relaxation method to compute numerically reliable lower and upper bounds.
We can rewrite \eqref{eq:D} in a semi-infinite programming formulation:

\begin{equation}
    \label{eq.semiD} \tag{SemiDSDTO}
   \opt= \max \{\inner{-\a}{\y^*} \mid \forall \y \in \ext(\SOST_{\F}(\vd)),\;  \inner{\y^\ast}{\y} \ge 0  \;,   - A^*(\y^*) + \bc\in \cZ^* \}
\end{equation}
\eqref{eq.semiD} has an infinite number of linear constraints.

A tractable approximation of \eqref{eq.semiD} is to replace the infinite number of linear constraints with a finite number of linear constraints, which can be done by generating a finite subset $\cP$ from $\ext(\SOST_{\F}(\vd))$. This leads to a finite-dimensional convex relaxation for \eqref{eq.semiD} and an upper bound on $\opt$:
\begin{equation}
    \label{eq:rD2} \tag{RelDSDTO}
 \opt \le  \ub_{\relx} \deq \max  \{\inner{-\a}{\y^*} \mid \forall \p \in \cP,\;  \inner{\y^\ast}{\p} \ge 0  \;,   - A^*(\y^*) + \bc\in \cZ^* \}
\end{equation}
Thus, a viable strategy is to solve the finite-dimensional convex relaxation \eqref{eq:rD2} to obtain an upper bound and refine the relaxation by the CP algorithm \cite{kortanek1993central}. The separation subproblem of \eqref{eq:rD2} asks for a linear constraint $\inner{\y^\ast}{\p} \ge 0$ violated by a given $\y^\ast$, so we can find such a constraint by solving the problem $\min_{\y \in \ext(\SOST_{\F}(\vd))}\inner{\y^\ast}{\y}$. Since $\SOST_{\F}(\vd)$ is convex, this problem is equivalent to a linear minimization problem:
\begin{equation}
    \label{eq.LO} \tag{LM}
    \min_{\y \in \SOST_{\F}(\vd)}\inner{\y^\ast}{\y}.
\end{equation}
 This problem is simpler than \eqref{eq:P}, but still requires LMO.

\begin{proposition}
Let $\y^\ast$ be a solution of the relaxed dual \eqref{eq:rD2} and let $\underbar{b}$ be a lower bound for \eqref{eq.LO}. Then, $\lb_{\relx}  \deq \ub_{\relx}  + \underbar{b}$ is a  lower bound on $\opt$.
\end{proposition}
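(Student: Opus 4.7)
The plan is to derive the bound via a relaxed weak-duality identity in which $\underbar{b}$ absorbs the slack caused by $\y^*$ potentially failing to lie in $\SOST^*_{\F}(\vd)$. I would fix a primal optimum $(\z^*, \y_p^*)$ of \eqref{eq:P}, so that $\z^* \in \cZ$, $\y_p^* = A(\z^*) + \a \in \SOST_{\F}(\vd)$, and $\opt = \inner{\bc}{\z^*}$.

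Substituting $\a = \y_p^* - A(\z^*)$ into $\ub_{\relx} = \inner{-\a}{\y^*}$ and moving $A$ to its adjoint yields the identity
\begin{equation*}
    \opt - \ub_{\relx} \;=\; \inner{\bc}{\z^*} + \inner{\a}{\y^*} \;=\; \inner{\bc - A^*(\y^*)}{\z^*} + \inner{\y^*}{\y_p^*}.
\end{equation*}
The first summand on the right is nonnegative: feasibility of $\y^*$ in \eqref{eq:rD2} gives $\bc - A^*(\y^*) \in \cZ^*$, which pairs nonnegatively with $\z^* \in \cZ$. The second summand is bounded below by $\underbar{b}$, since $\y_p^* \in \SOST_{\F}(\vd)$ implies $\inner{\y^*}{\y_p^*} \ge \min_{\y \in \SOST_{\F}(\vd)} \inner{\y^*}{\y} \ge \underbar{b}$ by hypothesis. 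Adding the two bounds gives $\opt \ge \ub_{\relx} + \underbar{b} = \lb_{\relx}$.

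The only delicate point is that $\SOST_{\F}(\vd)$ is a cone: read literally, $\min_{\y \in \SOST_{\F}(\vd)} \inner{\y^*}{\y}$ equals $0$ when $\y^* \in \SOST^*_{\F}(\vd)$ and $-\infty$ otherwise, so the bound is quantitatively informative only when \eqref{eq.LO} is handled over a normalized compact slice of the cone (e.g.\ extreme rays with $\|\x_{ij}\|=1$, or the unit-trace slice $\sep(\vd)$ for the threshold application). The inequality $\inner{\y^*}{\y_p^*} \ge \underbar{b}$ is then applied after the matching rescaling of $\y_p^*$, a step I would absorb into the convention that the LMO returns a valid finite $\underbar{b}$ for the normalized problem.
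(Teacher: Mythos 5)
Your proof is correct, but it takes a different route from the paper's. The paper argues entirely on the dual side: it invokes the no-duality-gap assumption to write $\opt$ as a Lagrangian (exact-penalty) reformulation of the semi-infinite dual, $\opt = \max\{\inner{-\a}{\y^*} + \min_{\y \in \ext(\SOST_{\F}(\vd))}\inner{\y^\ast}{\y} \mid -A^*(\y^*)+\bc \in \cZ^*\}$, and then plugs the relaxed-dual solution in as a feasible point of that max. You instead work from the primal side: fixing a primal optimum $(\z^*,\y_p^*)$ and expanding $\opt - \ub_{\relx} = \inner{\bc - A^*(\y^*)}{\z^*} + \inner{\y^*}{\y_p^*}$, with the first term nonnegative by cone duality and the second bounded below by $\underbar{b}$. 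Your decomposition is an explicit weak-duality computation that does not need the strong-duality (Slater) assumption the paper's identity rests on — it only needs attainment of the primal optimum (and even that can be relaxed to a minimizing sequence) — so it is the more elementary and slightly more general argument; the paper's version buys a cleaner one-line statement once the Lagrangian identity is accepted. Your closing caveat about the conic versus normalized reading of \eqref{eq.LO} is a real ambiguity in the paper (the oracle in \Cref{sec.sbb} works on the unit-trace slice $\sep(\vd)$, while \eqref{eq.LO} is stated over the cone), and your resolution — requiring that $\underbar{b}$ be a valid finite lower bound after matching the normalization of $\y_p^*$ — is the right way to make the inequality $\inner{\y^*}{\y_p^*}\ge\underbar{b}$ precise; the paper's own proof silently identifies $\min_{\y\in\SOST_{\F}(\vd)}\inner{\y^\ast}{\y}$ with the normalized minimum and skips this point.
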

\begin{proof}
    By Lagrangian duality of \eqref{eq:rD2}, we have that
    \begin{equation*}
      \opt = \max  \{\inner{-\a}{\y^*} + \min_{\y \in \ext(\SOST_{\F}(\vd)) } \inner{\y^\ast}{\y} \mid   - A^*(\y^*) + \bc\in \mathcal \cZ^* \}
    \end{equation*}
Since $\underbar{b} \le  \min_{\y \in \SOST_{\F}(\vd) } \inner{\y^\ast}{\y}  = \min_{\y \in \ext(\SOST_{\F}(\vd)) } \inner{\y^\ast}{\y}$, we have that $\opt \ge \inner{-\a}{\y^*} +  \underbar{b} = \ub_{\relx}  + \underbar{b}$.
\end{proof}

The following \Cref{tab.bounds} summarizes the different types of bounds for SDT optimization problems discussed above. For each bound, we indicate the computation required and the conditions under which it matches the true optimal value $\opt$.

\begin{table}[h!]
\centering
\begin{tabular}{l|ccc}
\toprule
Bound &   Computation & When equal $\opt$? & Remark\\
\midrule
$\ub_{\heur}$ & nonconvex \eqref{eq.LSDT} & \eqref{eq.LSDT}  solved & Often heuristic \\
$\ub_{\relx}$  & convex \eqref{eq:rD2} & \eqref{eq:rD2} solved & - \\
$\lb_{\relx}$  & \eqref{eq:rD2}/\eqref{eq.LO} lower bound & Unknown & Lagrangian bound  \\
\bottomrule
\end{tabular}
\caption{Summary of bounds for SDTO.}
\label{tab.bounds}
\end{table}

 These bounds can be computed numerically under our assumptions of smooth parameterization and the existence of an LMO,  but the underlying optimization problems remain challenging. In particular, both $\lb_{\relx}$ and $\ub_{\relx}$ rely on solving the linear minimization subproblem, which is  computationally difficult (recall the \NPh{}ness of the BSS problem). We note that only $\ub_{\relx}$ and $\lb_{\relx}$ are available during the optimization iteration. Computing  $\ub_{\heur}$  requires finding a feasible solution to the nonconvex constraint in \eqref{eq.LSDT}, however, often a heuristic solution is available (thus the constraint can be violated).

\begin{example}
    \label{exp.ref3}
        Following \Cref{exp.ref1}, we derive explicit formulations for the problems that compute bounds in \Cref{tab.bounds}. \eqref{eq.LSDT} becomes:
    \begin{equation}
      \ub_{\heur} \approx  \min \left\{ \z \;\middle|\z \ge 0,\, \x \in \C^{r\bar{d}},\; (\Id / \bar{d} - \phi)\z + \phi = \Psi(\x)\right\}.
    \end{equation}
\eqref{eq:rD2} becomes:
    \begin{equation}
     \ub_{\relx} =   \max \left\{ \inner{\y^*}{-\phi} \;\middle|\y^* \in \herm^{\bar{d}},\; 1 +\inner{\y^*}{\phi - \Id}   \ge 0\;, \forall \p \in \cP,\; \inner{\y^*}{\p} \ge 0  \right\}.
    \end{equation}
\end{example}

\section{Algorithms for SDTO}
\label{sec.algo}

In this section, we introduce our solution framework: (i) LADMM (\Cref{sec.ladmm}) computes high-quality heuristic primal solutions via a smooth factorization of PSD tensors; (ii) CP algorithm (\Cref{sec.sbb}) refines the solutions and provides numerically reliable upper and lower bounds. These two modules are coordinated by an iterative-refinement loop (\Cref{sec.implem}).

\subsection{LADMM for nonconvex lifted problems}
\label{sec.ladmm}

Smooth reparameterization is already used for a simpler constrained optimization problem:
\begin{equation}
    \label{eq:UP} \tag{P}
 \min_{\y \in \cY} f(\y),
\end{equation}
where \(f\) is a smooth function and \(\cY\) is a ``difficult'' convex set in a vector space \(\cE\) (one can think of \(\cY\) as \(\SOST_{\F}(\vd)\)). If we consider  \(\cY\) only as a convex set in \(\cE\), then we do not have efficient algorithms to solve \eqref{eq:UP}.

The idea is to reformulate \eqref{eq:UP} as a smooth optimization problem over a ``simple'' set, i.e., a Riemannian manifold. Recall that a smooth manifold \(\cM\) is Riemannian if it is equipped with an inner product \(\langle \cdot, \cdot \rangle_{\x}\) on each tangent space \(\tang_{\x}\cM\), which varies smoothly with the point \(\x\in\cM\). The definition of a lift is as follows.
 \begin{definition}[\cite{levin2025effect}]
     \label{def.lift}
     A (smooth) lift (parameterization) of a set \(\cY\subset\cE\) is a Riemannian manifold \(\cM\) in an ambient vector space \(\Xi\) together with a smooth map \(\Psi:\Xi\to\cE\) such that \(\Psi(\cM)=\cY\).
 \end{definition}
We call \(\Psi\) the lifting map. The dimension of the lifted space $\Xi$  is usually larger than the dimension of \(\cE\) (overparameterization). Given a lift \(\cM\) of the set \(\cY\), we can solve the following lifted problem:
\begin{equation}
    \label{eq:R} \tag{LiftedP}
    \min_{\x \in \cM} g(\x)\deq f(\Psi(\x)).
\end{equation}
The manifold constraint \(\x\in\cM\) is often easy to handle in practice. For optimization on Riemannian manifolds, Riemannian optimization algorithms maintain feasibility throughout the iterations, and stationary points of \eqref{eq:R} can be found efficiently using off-the-shelf tools; see \cite{bergmann2022manopt}. Note that lifting can introduce spurious local minima, i.e., the image of a local minimum of \eqref{eq:R} under the lifting map may not be a local minimum of \eqref{eq:UP}.

\begin{remark}
    \label{rmk.sdp}
    The factorization lifting map \(\Psi\) in \eqref{eq.lift} yields the Burer–Monteiro factorization \(\Psi(\x)=\sum_{i\in[r]}\x_i\x_i^\dagger\) for SDP matrices. It has been shown that, under appropriate conditions, the corresponding lifted problem \eqref{eq:R} has no spurious local minima \cite{burer2005local,journee2010low}. In practice, one can adopt a low-rank approximation for SDP matrices and increase the factorization size \(r\) until it becomes compact; see \cite{burer2003nonlinear,boumal2016non,journee2010low,tang2024feasible2,tang2024feasible}. As discussed in \cite{levin2025effect}, when \(\cY\) is a set of tensors, lifting can introduce spurious local minima for the lifted problem.
\end{remark}

To address problems of the form \eqref{eq:UP} in the presence of additional complicating constraints, we consider a more general problem:
\begin{equation}
    \label{eq:C} \tag{C}
     \min\{f(\y) \mid \y \in \cY,\; \z \in \cZ,\; A(\z) + \a = \y\}.
\end{equation}
When \(f(\y)=\langle\bc,\y\rangle\), \(\cY=\SOST_{\F}(\vd)\), and \(\cZ\) is a tractable convex cone, then \eqref{eq:C} reduces to \eqref{eq:P}.

 Lifting and ALM methods were used for tackling the SDP with complicating constraints $A(\z) + \a = \y$  similar to \eqref{eq:C}; see
\cite{wang2023decomposition,wang2025solving,tang2024feasible}. Inspired by these results,
 we  design a lifted alternating direction method of
multipliers (LADMM) for solving \eqref{eq:C} that further exploits separable structures $\cY$ and $\cZ$.

Define the Lagrangian of \eqref{eq:C}:
 \begin{equation}
    \label{eq.L} \tag{L}
    L(\z,\y,\chi) = f(\z) + \inner{A(\z) +  \a - \y}{\chi},
 \end{equation}
and  its augmented Lagrangian:
 \begin{equation}
    \label{eq.AL} \tag{AL}
    L_\zeta(\z,\y,\chi) = L(\z,\y,\chi) + \frac{\zeta}{2}\norm{A(\z) +  \a - \y}_2^2,
 \end{equation}
where $\zeta > 0$ is a penalty parameter.

The conventional ALM method will minimize the augmented Lagrangian $ L_{\zeta}(\z,\y,\chi) $ w.r.t. $\z,\y$ for a fixed dual $\chi$ and update $\chi$ accordingly.  In ADMM, the subproblem is decomposed into two steps, in which the augmented Lagrangian is minimized w.r.t. $\y$ and $\z$ separately. The subproblem w.r.t. $\z$ is a tractable convex optimization, so we can deploy off-the-shelf convex optimization solvers. The subproblem w.r.t. $\y$ is the same as the format \eqref{eq:P}, although it is not tractable, we can reformulate it  to the following lifted problem:
\begin{equation}
    \label{eq.single}
     \min_{\x \in \cM}L_{\zeta}(\z,\Psi(\x), \chi).
\end{equation}

\begin{algorithm}
\caption{Lifted Alternating Direction Method of Multipliers (LADMM)}
\label{algo.alm}
\KwIn{ $\z_1 \in \cZ$, $\x_1 \in \cM$, $\chi_1 \in \cE'$, and parameters $\zeta_1 > 0$, $\zeta_{\min} > 0$, $\zeta_{\max} > 0$}
\KwOut{$(\z_{k+1},\x_{k+1},\chi_{k+1})$}
$k \leftarrow 1$\;
\While{stopping criteria are not met}{
    Solve the  subproblem w.r.t. $\x$  approximately:
    \begin{equation}
        \label{eq.alm_subx}
     \x_{k+1} \leftarrow \argmin_{ \x \in \cM} L_{\zeta_k}(\z_k,\Psi(\x),\chi_k)
    \end{equation}\;
    Solve the  subproblem w.r.t. $\z$  exactly:
    \begin{equation}
        \label{eq.alm_subz}
     \z_{k+1} \leftarrow \argmin_{\z \in \cZ} L_{\zeta_k}(\z,\Psi(\x_{k+1}),\chi_k)
    \end{equation}\;
    $\chi_{k+1} \leftarrow \chi_k + \zeta_k(A(\z_{k+1}) + \a - \Psi(\x_{k+1}) )$\;
     update the penalty parameter $\zeta_{k+1}$\;
    $k \leftarrow k + 1$\;
}

\Return{$(\z_{k+1},\x_{k+1},\chi_{k+1})$}\;

\end{algorithm}

The LADMM in \Cref{algo.alm} is closely related to the approach combining ALM with Burer-Monteiro factorization for constrained low-rank SDPs; see \cite{wang2025solving}. The subproblem w.r.t. $\x$ is a standard unconstrained nonconvex optimization in the lifted space $\F^{r\bar{d}}$ given in  \eqref{eq.lift}, so advanced manifold concepts (e.g., retraction, Riemannian gradient) are unnecessary. If the subproblems are solved to global optimality and standard regularity conditions hold, one can invoke ADMM/ALM convergence results for convex components, see \cite{boyd2011distributed,deng2016global,he20121}.

In general, the subproblem \eqref{eq.alm_subx} cannot be solved to optimality. Thus, we can use LADMM  as a heuristic to find heuristic solutions for the problem \eqref{eq:P}.

 \begin{example}
    \label{exp.ref2}
Consider \eqref{eq:P}  in  \Cref{exp.ref3}. Note that the subproblem of minimization of $L_{\zeta}(\z,\y,\chi)$ w.r.t. $\z$ is a univariate quadratic problem, so the closed-form solution exists. Available off-the-shelf solvers can be used to solve the subproblem w.r.t. $\x$ inexactly, \eg Riemannian trust region (\cite{absil2007trust}) and Riemannian quasi-Newton (\cite{huang2015broyden}). In the case of \Cref{exp.ref3}, the manifold $\cM$ is simply the complex vector space $\C^{r\bar{d}}$ equipped with the standard Hermitian inner product.
 \end{example}

There are additional practical issues of  LADMM that inherit from ADMM. First, the constraint $A(\z) +  \a =  \y$ in \eqref{eq:C} can be violated by the solution. Second, it is known that ADMM can be very slow to converge to
high-accuracy solutions \cite{boyd2011distributed}.

\subsection{Iterative refinement algorithm}

We present the IR algorithm that improves solutions produced by the LADMM.

The IR procedure exploits a natural pairing between relaxations and factorizations. The bounds in \Cref{tab.bounds} can be interpreted via this pairing (see also \Cref{rmk.pair}). From any $\x\in\F^{r\bar{d}}$ one can reconstruct a finite set of extreme rays
\[
R(\x) \deq \{\p_j\}_{j\in[r]},\qquad
\p_j = \x_{j1}\mtens \x_{j1}^\dagger\mtens\cdots\mtens\x_{jm}\mtens\x_{jm}^\dagger,
\]
so that $\Psi(\x)=\sum_{\p\in R(\x)}\p$. Thus $R(\x)$ is a finite subset of $\ext(\SOST_{\F}(\vd))$.

To relate the upper bounds produced by the heuristic and by the relaxation, consider \eqref{eq.LSDT} and \eqref{eq:rD2}. The dual of \eqref{eq:rD2} is
\begin{equation}
    \label{eq:rD2D} \tag{DRelDSDTO}
   \ub_{\relx}= \min\{\inner{\bc}{\y}: \z\in\cZ,\; A(\z)+\a=\sum_{\p\in\cP}\lambda_\p\p,\;\lambda_\p\ge0\}.
\end{equation}

\begin{lemma}
    \label{lem.pair}
    Let $\x\in\F^{r\bar{d}}$ be feasible for \eqref{eq.LSDT}, and set $\cP=R(\x)$. Denote by $\ub_{\heur}=\inner{\bc}{\Psi(\x)}$ the heuristic upper bound and by $\ub_{\relx}$ the upper bound from \eqref{eq:rD2}. Then $\ub_{\heur}\ge\ub_{\relx}$. Moreover, if the elements of $R(\x)$ are affinely independent and $A$ is injective, then $\ub_{\heur}=\ub_{\relx}$.
\end{lemma}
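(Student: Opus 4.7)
The plan is to handle the two claims separately: the inequality by exhibiting an explicit feasible point of \eqref{eq:rD2D} attaining $\ub_{\heur}$, and the equality by a uniqueness argument on the feasible set of \eqref{eq:rD2D}. For the inequality, I would take $\cP = R(\x)$, reuse the $\z$ from the \eqref{eq.LSDT}-feasible pair $(\x,\z)$, and set $\lambda_\p = 1$ for every $\p \in R(\x)$. By the definition of $R(\x)$, $\sum_{\p\in\cP}\lambda_\p\p = \sum_{j\in[r]}\p_j = \Psi(\x)$, and by feasibility of $\x$ in \eqref{eq.LSDT} this equals $A(\z)+\a$; nonnegativity of $\lambda$ and $\z\in\cZ$ are automatic. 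The objective value at this feasible point is $\inner{\bc}{\Psi(\x)} = \ub_{\heur}$, and since \eqref{eq:rD2D} is a minimization with optimal value $\ub_{\relx}$, this gives $\ub_{\relx} \le \ub_{\heur}$.

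For the equality under the additional hypotheses, let $(\z',\lambda')$ be any feasible point of \eqref{eq:rD2D}. Subtracting the feasibility relation at the constructed point $(\z,\mathbf{1})$ yields
\[
A(\z'-\z) \;=\; \sum_{j\in[r]}(\lambda'_j-1)\,\p_j.
\]
Writing $c_j \deq \lambda'_j - 1$, the plan is to argue that affine independence of $\{\p_j\}_{j\in[r]}$ combined with injectivity of $A$ forces $c_j=0$ for all $j$: then $\lambda' = \mathbf{1}$ and $\z' = \z$ by injectivity. Every feasible point of \eqref{eq:rD2D} thus coincides with the constructed one, its objective equals $\inner{\bc}{\Psi(\x)} = \ub_{\heur}$, and therefore $\ub_{\relx} = \ub_{\heur}$.

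The main obstacle is the implication ``affine independence and $A$ injective $\Rightarrow c_j = 0$ for all $j$.'' Affine independence by itself only gives uniqueness of affine-combination representations, i.e., it forces $c_j = 0$ provided the normalization $\sum_j c_j = 0$ is available. I expect this extra normalization to come from applying a suitable linear functional to both sides of the displayed equation that annihilates $\im A$ while capturing the common scaling of the extreme rays $\p_j$ --- the trace in the running example of \Cref{exp.ref1}, where $\tr\circ A \equiv 0$ since $\tr(\Id/\bar{d} - \phi) = 0$, and each $\p_j$ has equal trace. Once the normalization is in hand, affine independence closes the combinatorial argument, and injectivity of $A$ propagates the uniqueness from $\lambda'$ to $\z'$, completing the proof.
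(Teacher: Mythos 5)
Your proof of the inequality $\ub_{\heur}\ge\ub_{\relx}$ is correct and is exactly the paper's argument: take $\cP=R(\x)$, reuse the $\z$ from the \eqref{eq.LSDT}-feasible pair, set all $\lambda_\p=1$, and observe this is feasible for the minimization \eqref{eq:rD2D} with objective value $\inner{\bc}{\Psi(\x)}$.

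For the equality, you follow the same route as the paper (uniqueness of the representation under affine independence plus injectivity of $A$), but your write-up stops short of a proof: the key implication ``$A(\z'-\z)=\sum_j c_j\p_j$ with $\{\p_j\}$ affinely independent and $A$ injective $\Rightarrow c=0$'' is exactly the step you flag as an obstacle, and you only \emph{conjecture} where the missing normalization $\sum_j c_j=0$ comes from rather than establishing it. Your diagnosis is accurate: affine independence of $\{\p_j\}$ is equivalent to the statement that $\sum_j c_j\p_j=0$ \emph{together with} $\sum_j c_j=0$ forces $c=0$; without the normalization it gives nothing, and injectivity of $A$ does not help because the left-hand side $A(\z'-\z)$ need not vanish a priori. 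So as written your proof has a genuine gap. It is worth saying, though, that the paper's own proof has the same gap --- it simply asserts ``the coefficients $\lambda_j$ are unique and hence equal to one'' without supplying the normalization. Two clean ways to close it: (i) strengthen the hypothesis to \emph{linear} independence of $R(\x)$, which kills $\sum_j c_j\p_j\in\im A$ only when combined with $\im A\cap\mathrm{span}(R(\x))=\{0\}$, or more simply note that linear independence plus a functional annihilating $\im A$ suffices; or (ii) in the setting of \Cref{exp.ref1}, apply the trace to both sides as you suggest: $\tr(A(\z'-\z))=0$ and $\tr(\p_j)$ is constant across $j$ (after normalization), so $\sum_j c_j=0$, affine independence gives $c=0$, and injectivity of $A$ then gives $\z'=\z$. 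Making one of these explicit would turn your sketch into a complete proof and would in fact be an improvement on the paper's one-liner.
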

\begin{proof}
By construction $\Psi(\x)=\sum_{j=1}^r\p_j$ provides a feasible representation for \eqref{eq:rD2D} (with $\lambda_j=1$), so $\ub_{\heur}=\inner{\bc}{\Psi(\x)}\ge\ub_{\relx}$. If the $\p_j$ are affinely independent, then the coefficients $\lambda_j$ are unique and hence equal to one; injectivity of $A$ makes $\z$ unique, yielding equality.
\end{proof}

Thus the factorization maintained by LADMM supplies a natural finite candidate set of extreme points for the dual relaxation \eqref{eq:rD2}.

We implement this idea using a CP procedure that maintains a finite inner approximation $\cP\subseteq\ext(\SOST_{\F}(\vd))$. Because $\SOST_{\F}(\vd)$ is intractable, we assume access to an LMO for \eqref{eq.LO} that, on input $\y$, returns a lower bound $\underbar{b}$ and an upper bound $\bar b$ for the separation problem; the oracle also returns an extreme point $\p$ stored as the form $ v_{1}\mtens v_{1}^\dagger\mtens\cdots\mtens v_{m}\mtens v_{m}^\dagger$ with the same value as the upper bound $\underbar{b}$.

Each CP iteration solves the LP \eqref{eq:rD2} over the current set $\cP$ to obtain $\ub_{\relx}$ and a corresponding dual solution $(\{\lambda_\p\}_{\p\in\cP},\z)$ in $\eqref{eq:rD2D}$. The LMO is then called to search for a violated cut. If the LMO returns $\underbar b\ge0$, no violated cut exists and the current $\y$ is optimal for the original SDT problem, so $\ub_{\relx}= \lb_{\relx} = \opt$. If $\bar b<0$, the extreme point $\p$ is added to $\cP$ and the algorithm continues. If the oracle cannot produce a valid extreme point (generation fails), the procedure terminates without any further improvement of the bounds. We also maintain a numerically reliable (Lagrangian) lower bound $\lb_{\relx}=\ub_{\relx}+\underbar b$ throughout the iterations.

The standard CP algorithm is slow \cite{kortanek1993central}. Thus, the IR algorithm alternates between (i)  solving \eqref{eq.LSDT} using LADMM and (ii) solving or refining the dual relaxation \eqref{eq:rD2} using the CP procedure. In practice, this alternating refinement often improves both feasibility and objective quality compared with LADMM alone.

\begin{algorithm}
\caption{Cutting-plane (CP) algorithm}
\label{algo.dual}
\KwIn{Initial set $\cP_1 \subseteq \ext(\SOST_{\F}(\vd))$}
\KwOut{Feasible solution (or certificate) for \eqref{eq:P}}
$k \leftarrow 1$\;
\Repeat{stopping criteria are met}{
    solve \eqref{eq:rD2} over $\cP_{k}$ and obtain $\ub_{\relx}$\;
    recover a corresponding dual solution $(\{\lambda_\p\}_{\p \in \cP_k},\z_k)$ for \eqref{eq:rD2D}\;
    call the LMO for \eqref{eq.LO}; obtain a lower bound $\underbar{b}$, an upper bound $\bar{b}$, and an extreme point $\p'$\;
    update $\lb_{\relx} \leftarrow \ub_{\relx}  + \underbar{b}$\;
    \eIf{$\underbar{b} \ge 0$}{
        The current solution $\sum_{\p\in\cP_k}\lambda_\p\p$ is optimal for \eqref{eq:P}; \textbf{break}\;
    }{
        \eIf{$\bar{b} < 0$}{
            add $\p'$ to $\cP_k$\;
        }{
            generation failed (no violated cut found); \textbf{break}\;
        }
    }
    $k \leftarrow k + 1$\;
}
\Return{$(\{\lambda_\p\}_{\p \in \cP_k}, \cP_k, \z_k)$}\;
\end{algorithm}

The overall IR procedure is given in Algorithm \ref{algo.lift}. The algorithm repeatedly calls the CP routine after the LADMM terminates. The LADMM output is used to warm-start the CP, and conversely the CP output can be used to warm-start LADMM. This interplay helps avoid infeasible solutions and poor local minima returned by LADMM alone, and slow convergence of CP. In step 4, let us write $\cP_k$ as $ \{\p_j\}_{j \in [r]}$, as $\p_j$ is stored as $ v_{j1}\mtens v_{j1}^\dagger\mtens\cdots\mtens v_{jm}\mtens v_{jm}^\dagger$ and set $\x_k$  as $(v_{j1}\lambda^{1/{2m}}_j,\ldots,v_{jm}\lambda^{1/{2m}}_j)_{j \in [r]}$ such that $\Psi(\x_k) = \sum_{\p_j\in\cP_k}\lambda_j \p_j$.

\begin{algorithm}
    \caption{Iterative refinement (IR) algorithm}
    \label{algo.lift}
    \KwIn{Objective $f$, set $\SOST_{\F}(\vd)$ with lift $\cM$, cone $\cZ$}
    \KwOut{A feasible solution for \eqref{eq:P} (with bounds)}
    construct initial $\z_1\in\cZ$, initial active set $\cP_1\subseteq\ext(\SOST_{\F}(\vd))$, and weights $\{\lambda_{\p}\}_{\p\in\cP_1}$\;
    $k \leftarrow 1$\;
    \Repeat{stopping criteria are met}{
        form $\x_k \in \F^{rm\bar d}$ corresponding to $\sum_{\p\in\cP_k}\lambda_{\p}\p \in \SOST_{\F}(\vd)$\;
        call \Cref{algo.alm} (LADMM) with warm start $(\z_k,\x_k)$ to obtain $(\z'_k, \x'_k)$\;
        let $\cP'_k \leftarrow \cP_k \cup R(\x'_k)$ (union of previous active set and active components from $\x'_k$)\;
        call \Cref{algo.dual} (CP) with warm start $\cP'_k$ to obtain $(\lambda_{\cP''_k},\cP''_k,\z_k)$\;
        update global bounds $\lb_{\relx}$ and $\ub_{\relx}$\;
        set $\cP_{k+1} \leftarrow \{\p\in\cP''_k\mid \lambda_{\p}>0\}$ (keep active components)\;
        $k \leftarrow k+1$\;
    }
    \Return{$\sum_{\p\in\cP_k}\lambda_{\p}\p $ as a final solution (and recorded bounds)}\;
\end{algorithm}

The CP subroutine (\Cref{algo.dual}) ensures that the upper bound is non-increasing, and the IR loop keeps the active set size manageable.

\begin{proposition}
    Assume $\dim(\SOST_{\F}(\vd)) = D$. The IR loop guarantees that the upper bound $\ub_{\relx}$ is non-increasing across iterations, and the cardinality $|\cP'_k|$ of the intermediate active set is at most $ 4D+2$.
\end{proposition}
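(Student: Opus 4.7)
My plan is to prove the two assertions separately: monotonicity of $\ub_{\relx}$ across IR iterations, and the cardinality bound $|\cP'_k|\le 4D+2$.

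For monotonicity I would argue in three stages. First, inside a single call to the CP subroutine (\Cref{algo.dual}), each iteration either terminates or augments the active set $\cP$ with a new extreme ray $\p'$ returned by the LMO. Because \eqref{eq:rD2} is a maximization problem subject to the constraints $\inner{\y^*}{\p}\ge 0$ for $\p\in\cP$, enlarging $\cP$ shrinks the feasible region, so $\ub_{\relx}$ is non-increasing within a CP run. Second, the pruning step in line 9 of the IR loop removes only those $\p$ with $\lambda_\p=0$ in an optimal solution of \eqref{eq:rD2D}; by LP strong duality and complementary slackness, the corresponding primal constraints of \eqref{eq:rD2} are not binding at the optimum, so pruning preserves $\ub_{\relx}$ (the current $\y^*$ remains feasible, and the pruned dual solution has the same objective). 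Third, line 6 sets $\cP'_{k+1}=\cP_{k+1}\cup R(\x'_{k+1})\supseteq\cP_{k+1}$, which again only tightens the relaxation and hence cannot increase $\ub_{\relx}$. Chaining these three facts yields monotonicity of $\ub_{\relx}$ across the IR iterations.

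For the cardinality bound I would decompose $|\cP'_k|\le |\cP_k|+|R(\x'_k)|$ using $\cP'_k=\cP_k\cup R(\x'_k)$. The second term counts the components of the factorization $\x'_k\in\F^{r\bar d}$ produced by LADMM; by \Cref{prop.compact} the lift $\cM$ may be chosen with compact size $r\le 2D+1$, so $|R(\x'_k)|\le 2D+1$. For the first term, $\cP_k=\{\p:\lambda_\p>0\}$ is the support of an optimal solution of the LP \eqref{eq:rD2D}. Its equality constraint expresses a tensor of $\SOST_{\F}(\vd)$ as a conic sum of extreme rays, so by the Carathéodory-type argument underlying \Cref{prop.compact}, an optimal solution exists with support at most $2D+1$ (a basic feasible solution, which a simplex-type LP subroutine returns by default). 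Summing the two bounds gives $|\cP'_k|\le (2D+1)+(2D+1)=4D+2$.

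The step I would be most careful about is the pruning argument in the monotonicity proof, since it rests on the CP subroutine returning a primal–dual optimal pair satisfying complementary slackness; this is a standard assumption on the LP solver but should be stated explicitly. A minor secondary point is that the bound $|\cP_k|\le 2D+1$ presumes the LP subroutine returns a basic (minimum-support) optimum—automatic for simplex methods, but otherwise requiring a purification step.
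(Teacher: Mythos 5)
Your proof is correct and takes essentially the same route as the paper's: the cardinality bound uses the identical decomposition $|\cP'_k|\le|\cP_k|+|R(\x'_k)|$ with each term bounded by $2D+1$ via \Cref{prop.compact}, and your monotonicity argument is the dual (cutting-plane) view of the paper's primal observation that the warm-started active set still contains a representation of the previous optimum. One small nit: $\lambda_\p=0$ does not imply the corresponding constraint of \eqref{eq:rD2} is non-binding (degenerate LPs can have tight constraints with zero multipliers), but your parenthetical weak-duality argument for why pruning preserves the value is the correct one and does not need that claim.
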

\begin{proof}
    In each iteration, the CP routine returns an optimal solution over the current active set; when this solution is reused to form the warm start for the next iteration, the convex hull of the new active set must contain at least one previously computed optimal solution. Hence, the upper bound cannot increase and will either decrease or remain the same. For the cardinality bound, note $|\cP'_k|\le |R(\x'_k)| + |\cP_k|$. By Proposition \ref{prop.compact}, any tensor in $\SOST_{\F}(\vd)$ admits a representation with at most $2D+1$ components, so both $|R(\x'_k)|$ and $|\cP_k|$ are bounded by $2D+1$, implying $|\cP'_k|\le 4D+2$.
\end{proof}

\begin{remark}
    \label{rmk.crossover}
A practical crossover strategy for the LADMM is to set the iteration limits of both \Cref{algo.lift} and \Cref{algo.dual} to one, then construct the convex combination of the active set returned by CP together with $R(\x)$ from LADMM. Finding a feasible point in that convex hull provides a simple and effective way to turn the LADMM output into a certified feasible solution.
\end{remark}

We defer the implementation details and parameter choices for these algorithms to \Cref{sec.implem}.

\section{Branch-and-bound LMO}

\label{sec.sbb}

For \NPh{} problems, a common approach is to employ oracle-based optimization algorithms that decompose the original problem into simpler—though typically still \NPh{}—subproblems. This idea underlies methods such as Dantzig–Wolfe decomposition for integer programming \cite{Vanderbeck2010} and CG methods \cite{cgbook}. Due to the \NPh{} nature of the subproblems and performance considerations, oracles are often exhaustive enumeration with time limits or heuristics. However, an oracle capable of exploring the entire search space is generally required when heuristics fail to find a solution. Similarly, the CP procedure \Cref{algo.dual} is based on an LMO.

In \cite{liu2025unified}, the LMO constructs a simplicial partition of the pure product states and searches for the best solution within this a priori partition. Below, we describe an sBB-based LMO, in which the partition is realized lazily through the branching rule. We assume that readers are familiar with sBB; see \cite{belotti2009branching} for the general approach and \cite{chen2017spatial} for a similar pipeline in complex nonconvex second-order polynomial optimization. We summarize only the modules essential in our implementation: convex relaxations, branching rules, and heuristics.

W.l.o.g., we can normalize the tensor in the separation subproblem \eqref{eq.LO} to unit trace. We focus on the complex field $\F=\C$ (the real case $\F=\R$ is analogous and simpler). With this normalization, the separation problem \eqref{eq.LO} becomes the BSS problem:
\begin{equation}
    \label{eq.sep} \tag{BSS}
        \min_{\y \in \sep(\vd)}\inner{\chi}{\y}.
\end{equation}
Omitting the constraint $\tr(\y)=1$ in \eqref{eq.sepstate}  defining $\sep(\vd)$ yields the unnormalized BSS (which is unbounded); the two formulations are equivalent up to scaling, and we focus on the normalized one.

\subsection{Convex relaxations}
\label{sec.rel}

The sBB algorithm requires a convex outer approximation $\cC$ of separable states for constructing convex relaxations of the BSS problem. In addition, the set $\cC$ yields a convex relaxation of  \eqref{eq:P}:
\begin{equation}
\label{eq:PR}
     \min_{\z \in \cZ,\; A(\z) + \a \in \cC} \langle\bc,\z\rangle,
\end{equation}
   whose optimal value is a valid lower bound for \eqref{eq:P}.

In definition \eqref{eq.sepstate}, separable states are expressed as convex combinations of rank-1 tensor products.
Our convex outer approximations are based on an equivalent, often more convenient, lifted representation of separable states.

\begin{proposition}
\label{prop.state}
    \[
        \sep(\vd) \;=\; \conv\bigl\{ \bmtens_{k\in[m]}\X_k \;:\; \X_k\in\dmat^{d_k}\ \text{for all }k\in[m]\bigr\}.
    \]
\end{proposition}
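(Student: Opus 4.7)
The plan is to prove the two inclusions separately, using the spectral decomposition of density matrices in one direction and straightforward normalization in the other. Let $\cC \deq \conv\{\bmtens_{k\in[m]}\X_k : \X_k \in \dmat^{d_k}\}$ denote the right-hand set.

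For the inclusion $\cC \subseteq \sep(\vd)$, I would take a generator $\bmtens_{k \in [m]}\X_k$ and use the spectral decomposition of each density matrix $\X_k = \sum_{j \in [d_k]} p_{kj} \v_{kj}\v_{kj}^\dagger$ with $p_{kj}\ge 0$, $\sum_j p_{kj}=1$, and unit vectors $\v_{kj}$. Distributing the Kronecker product gives
\[
  \bmtens_{k \in [m]} \X_k = \sum_{j_1,\dots,j_m} \Bigl(\prod_{k\in[m]} p_{k j_k}\Bigr)\,\v_{1j_1}\mtens\v_{1j_1}^\dagger \cdots \v_{m j_m}\mtens\v_{m j_m}^\dagger,
\]
which is a convex combination of rank-one product tensors since the weights are nonnegative and sum to one. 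Hence each generator of $\cC$ lies in $\SOST_{\C}(\vd)$ and has trace one, so by convexity $\cC \subseteq \sep(\vd)$.

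For the reverse inclusion $\sep(\vd) \subseteq \cC$, I would take any $\y \in \sep(\vd)$. By definition \eqref{eq.psdt} and the trace-one constraint, write $\y = \sum_{i\in[r]} \x_{i1}\mtens \x_{i1}^\dagger \cdots \x_{im}\mtens \x_{im}^\dagger$. Set $\mu_i \deq \prod_{k\in[m]} \|\x_{ik}\|^2$ and (for $\mu_i > 0$) define unit vectors $\tilde\x_{ik} \deq \x_{ik}/\|\x_{ik}\|$; terms with $\mu_i = 0$ vanish and can be discarded. Then $\y = \sum_{i} \mu_i \bmtens_{k} \tilde\x_{ik}\tilde\x_{ik}^\dagger$, where each $\tilde\x_{ik}\tilde\x_{ik}^\dagger \in \dmat^{d_k}$. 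Taking the trace and using $\tr(\bmtens_k \tilde\x_{ik}\tilde\x_{ik}^\dagger) = \prod_k \tr(\tilde\x_{ik}\tilde\x_{ik}^\dagger)=1$ gives $\sum_i \mu_i = \tr(\y) = 1$, so $\y$ is a convex combination of generators of $\cC$.

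The two directions together establish the equality. The only subtlety — and what I would treat carefully — is the bookkeeping between unnormalized rank-one product components and the convex coefficients: absorbing the factor $\prod_k \|\x_{ik}\|^2$ into the mixture weights is exactly what the unit-trace condition permits, and conversely the spectral expansion in the other direction automatically produces a probability distribution over product pure states. No infinite-dimensional or closure issues arise because every convex combination in both expressions is finite.
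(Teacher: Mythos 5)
Your proof is correct and follows essentially the same route as the paper: the inclusion $\conv\{\bmtens_k\X_k\}\subseteq\sep(\vd)$ via the spectral decomposition of each $\X_k$ and distributing the tensor product, and the converse by normalizing the rank-one components and absorbing the norms into convex weights (a step the paper simply declares ``immediate from the definition''). The extra bookkeeping you supply for the normalization direction is a harmless elaboration, not a different argument.
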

\begin{proof}
    The inclusion $\sep(\vd)\subseteq\conv\{\bmtens_{k}\X_k\}$ is immediate from the definition. For the reverse inclusion, let
    $\rho=\sum_{i=1}^n\alpha_i \rho_i$ be any convex combination with $\rho_i=\bmtens_{k}\X_{ik}$ and $\alpha_i\ge0$, $\sum_i\alpha_i=1$.
    By the spectral theorem, each density matrix $\X_{ik}$ admits a decomposition into rank-1 density matrices:
    \[
        \X_{ik} \;=\; \sum_{j=1}^{d_k} \beta_{ikj}\, v_{ikj}v_{ikj}^\dagger,
        \qquad \beta_{ikj}\ge0,\ \sum_j\beta_{ikj}=1,
    \]
    where $v_{ikj}$ are unit vectors. Taking tensor products of these decompositions yields that each $\rho_i$ is a convex combination of pure product states (rank-1 tensor products). Hence $\rho$ itself is a convex combination of pure product states, proving the claim.
\end{proof}

Thus,
in this section, we lift the tensor product $\x_k\x_k^\dagger$ to $\X_k$.
This proposition implies that any convex constraints satisfied by tensors of the form $\bmtens_{k}\X_k$ yield a convex outer approximation of $\sep(\vd)$. Working with the subsystems' density matrices $\X_k$ is often more tractable than manipulating rank-1 product states directly. After reviewing the DPS outer approximations of  separable bipartite states in \Cref{sec.rel1}, we extend them to construct SDP  outer approximations for separable multipartite states in \Cref{sec.rel2}. We also present linear inequalities useful for spatial branching in \Cref{sec.rel3}, and in \Cref{sec.rel4} we derive additional valid SDP inequalities via a tensor RLT that unifies these constructions.

\subsubsection{DPS outer approximations  of bipartite systems}
\label{sec.rel1}

The DPS hierarchy \cite{doherty2004complete} was proposed for separable bipartite states. Consider the dimension vector $d_{AB}=(d_A,d_B)$ of a bipartite system and the tensor product
\begin{align}\label{sep state}
    \rho_{AB} = \rho_A  \mtens \rho_B,
\end{align}
of two density matrices $\rho_A \in \dmat^{d_A}$ and $\rho_B \in \dmat^{d_B}$. The DPS hierarchy is based on an extension of $\rho_{AB}$ on the $B$ subsystem:
\begin{equation}
\label{eq:stateextension}
    \rho_{AB_{[\ell]}} =  \rho_A  \mtens  (\rho_{B})^{\mtens \ell},
\end{equation}
for $\ell \ge 1$.

The extended state $\rho_{AB_{[\ell]}}$ satisfies the following constraints:
\begin{itemize}
\item[(a)] Positivity: $\rho_{AB_{[\ell]}} \succeq 0$.
\item[(b)] Partial-trace projection: tracing out the systems $B_2,\ldots,B_\ell$ from $\rho_{AB_{[\ell]}}$ recovers the original bipartite state:
\begin{equation}
\label{eq:extensionpartialtracecond}
\tr_{B_{[2:\ell]}} (\rho_{AB_{[\ell]}}) =   \rho_A  \mtens \rho_{B} = \rho_{AB}.
\end{equation}
Here $\tr_{B_{[2:\ell]}}$ denotes the partial trace over subsystems $B_2,\dots,B_\ell$ and this operator is linear.
\item[(c)] Symmetry: the extension is symmetric under permutations of the $\ell$ copies of subsystem $B$. Equivalently, if $\Pi$ is the orthogonal projector onto the symmetric subspace of ${\herm^{d_B}}^{\otimes \ell}$, then
\begin{equation}
\label{eq:extensionsymmetrycond}
(\Id \otimes \Pi)\,\rho_{AB_{[\ell]}}\,(\Id\otimes \Pi) = \rho_{AB_{[\ell]}}.
\end{equation}
\item[(d)] PPT: for any subset of the $B$-copies, the partial transpose with respect to that subset yields a PSD matrix. In particular, for any $s=1,\ldots,\ell$ and any choice of $s$ copies,
\begin{equation}
\label{eq:extensionpptcond}
\Tr_{B_{[s]}}(\rho_{AB_{[\ell]}}) \succeq 0,
\end{equation}
where $\Tr_{B_{[s]}}$ denotes the partial transpose on the selected $B$-subsystems; see \eqref{eq.transpose} for the elementwise definition.
\end{itemize}

The PSD variable $\rho_{AB_{[\ell]}}$ lives in an extended space of dimension $(d_A d^\ell_B)^2$. Define
\begin{equation}
 \dps_\ell(d_{AB}) \;=\; \{\,w_{AB} \in \dmat^{d_{AB}}:\ \exists\ \rho_{AB_{[\ell]}}\ \text{satisfying } \eqref{eq:extensionpartialtracecond},\ \eqref{eq:extensionsymmetrycond},\ \eqref{eq:extensionpptcond}\,\}.
\end{equation}
We refer to the collection of conditions above as the level-$\ell$ DPS constraints, which are completely determined by the configuration $(d_A,d_B, \ell)$. The DPS hierarchy yields a nested family of convex outer approximations of $\sep(d_{AB})$:
\begin{equation}
    \sep(d_{AB}) \subseteq \cdots \subseteq \dps_\ell(d_{AB})  \subseteq  \cdots   \subseteq \dps_1(d_{AB}).
\end{equation}

The DPS constraints are often stated without explicitly enforcing unit-trace constraints on the extended variables;  unit-trace constraints on the extended variables are often added in practice.

\begin{remark}
    \label{rmk.dps}
 The DPS hierarchy can also be used to construct convex outer approximations for separable multipartite states by grouping subsystems into a bipartition. For example, writing a tripartite state as the bipartition $\rho_{ABC}=\rho_A\otimes\rho_{BC}$ allows one to relax it as a bipartite system for the pair $(A'=A,B'=BC)$.  Thus, the DPS constraints corresponding to the configuration $(d_{A'},d_{B'}, \ell)$  give a convex outer approximation of $\cC$ for separable multipartite states. This approach is available in Ket.jl.
\end{remark}

\subsubsection{Factorizable DPS outer approximations for multipartite systems}
\label{sec.rel2}
We build convex outer approximations of separable multipartite states via factorization.
 Because the tensor product is associative, there are several ways to evaluate the expression $\y = \bmtens_{k\in[m]}\X_k$. One natural choice is to evaluate from left to right. Introduce auxiliary variables
\[
\Z_h \deq \bmtens_{k \in [h]} \X_k,
\]
with $\Z_1=\X_1$, so that $\Z_m=\y$ and the recursion $\Z_k = \Z_{k-1} \mtens \X_k$ holds. This factorization yields the following recursive DPS outer approximation for the multipartite states in $\sep(\vd)$:
\begin{subequations}
\label{eq.rmmr}
\begin{align}
&     &     \Z_{1} = & \X_1, \\
& \forall k \in[m],        &  \X_k \succeq & 0, \\
& \forall k \in[m],      &  \tr(\X_k) = & 1,\\
& \forall k \in[2:m],     &     \tr(\Z_{k}) = & 1, \\
& \forall k \in[2:m],      &     \Z_{k} \in  & \dps_{\ell}((d_{[k-1]},d_k)), \\
& \forall k \in[2:m],      &  \tr_{\X_k}(\Z_{k}) = & \Z_{k-1}, \\
& \forall k \in[2:m],      &  \tr_{\Z_{k-1}}(\Z_{k}) = & \X_k,
\end{align}
\end{subequations}
where the linear maps $\tr_{\X_k}$ and $\tr_{\Z_{k-1}}$ trace out the subsystems corresponding to $\X_k$ and $\Z_{k-1}$ as in \eqref{eq:extensionpartialtracecond}. These projection constraints link each extended state $\Z_k$ with the projected states $\X_k$ and $\Z_{k-1}$, enabling recovery of $\X_k$ from a relaxation solution.

To further reduce the number of PSD variables, we present a dichotomy DPS (DDPS) outer approximation by exploiting the associativity of tensor product. Represent the tensor product evaluation by a binary tree $T=(S,E)$ whose nodes $S$ correspond to intermediate states and states $\X_k$; each node $k\in S$ is associated with a variable $\Z_k$. Leaf nodes $S'=[m]\subseteq S$ correspond to the subsystem variables $\X_k$, and the root node $k_0$ corresponds to $\y$. Each internal node has two subnodes $k_A,k_B\in S$ and a PSD variable $\Z_k$ that equals the tensor product of its subnodes' variables, i.e., $\Z_k=\Z_{k_A}\mtens\Z_{k_B}$. The DDPS relaxation is given by:
\begin{subequations}
\label{eq.dmmr}
\begin{align}
&        &   \Z_{k_0} = & \y,\\
& \forall k \in S',     &   \Z_{k} = & \X_k, \\
& \forall k \in S,        &  \Z_k \succeq & 0, \\
& \forall k \in S,      &  \tr(\Z_k) = & 1,\\
& \forall k \in S\setminus S',      &     \Z_{k} \in  & \dps_{\ell}((d_{k_A},d_{k_B})), \\
& \forall k \in S\setminus S',     &     \tr_{\Z_{k_A}}(\Z_{k}) = & \Z_{k_B}, \\
& \forall k \in S\setminus S',     &   \tr_{\Z_{k_B}}(\Z_{k}) = & \Z_{k_{A}},
\end{align}
\end{subequations}
where $d^2_{k_A}$ and $d^2_{k_B}$ denote the dimensions of subsystems $\Z_{k_A}$ and $\Z_{k_B}$, respectively.

The recursive system \eqref{eq.rmmr} is a special case of the dichotomy system \eqref{eq.dmmr}, corresponding to a path-shaped tree in which each internal node has at least one subnode that is a leaf. If the tree is balanced, its depth can be reduced to $\mathcal{O}(\log m)$ and the number of additional PSD variables is minimized.

\begin{proposition}
    Both \eqref{eq.rmmr} and \eqref{eq.dmmr} define convex outer approximations of the separable multipartite set.
\end{proposition}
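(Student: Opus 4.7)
The plan is to establish convexity and the outer-approximation inclusion separately. The key reduction comes from \Cref{prop.state}, which characterizes $\sep(\vd)$ as the convex hull of tensor products of subsystem density matrices; together with convexity of the relaxation, this will let me restrict the inclusion check to pure product states.

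For convexity, I would classify the defining constraints into three groups: linear equalities (including the identifications $\Z_{k_0}=\y$ and $\Z_k=\X_k$ at leaves, the unit-trace conditions, and the partial-trace projections), PSD cone constraints $\Z_k\succeq 0$ and $\X_k\succeq 0$, and bipartite DPS memberships $\Z_k\in\dps_\ell((d_{k_A},d_{k_B}))$. The first two classes are convex by definition, while $\dps_\ell$ is itself the projection of a spectrahedron built from PSD variables subject to linear symmetry, partial-trace and PPT conditions, hence convex. Intersection preserves convexity in the lifted variables, and projecting onto the $\y$-coordinate preserves convexity of the feasible region.

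For the outer-approximation direction, by the convexity just established it suffices to show that every pure product state $\y=\bmtens_{k\in[m]}\X_k$ with $\X_k\in\dmat^{d_k}$ admits feasible auxiliary variables in \eqref{eq.dmmr}; the recursive system \eqref{eq.rmmr} is then the special case of a path-shaped tree. For each node $k\in S$ let $L_k\subseteq[m]$ denote the leaves of its subtree, and set $\Z_k\deq\bmtens_{k'\in L_k}\X_{k'}$. Then $\Z_{k_0}=\y$ at the root, $\Z_k=\X_k$ at each leaf, the PSD and unit-trace conditions are preserved by tensor products of density matrices, and for every internal node with children $(k_A,k_B)$ the equality $\Z_k=\Z_{k_A}\mtens\Z_{k_B}$ makes $\tr_{\Z_{k_A}}(\Z_k)=\tr(\Z_{k_A})\,\Z_{k_B}=\Z_{k_B}$ (and the symmetric identity) immediate from $\tr(\Z_{k_A})=\tr(\Z_{k_B})=1$.

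The only step needing a short verification is the DPS membership $\Z_k\in\dps_\ell((d_{k_A},d_{k_B}))$. I would exhibit the explicit extension $\Z_{k_A}\mtens(\Z_{k_B})^{\mtens\ell}$, which is PSD, permutation-symmetric on the $\ell$ copies of the $B$-system, reproduces $\Z_k=\Z_{k_A}\mtens\Z_{k_B}$ under $\tr_{B_{[2:\ell]}}$, and remains PSD under any partial transpose on the $B$-copies since partial transpose of a product of PSD matrices factors as a product of PSD matrices. I do not foresee a real obstacle: the proof is essentially a bookkeeping argument on the tree together with a clean lift from \Cref{prop.state}, and the only subtlety is checking the DPS axioms on product extensions, which is routine.
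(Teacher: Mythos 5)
Your proof follows essentially the same route as the paper's: the paper's (one-line) argument is that \eqref{eq.dmmr} relaxes the exact lifted system in which $\Z_k=\Z_{k_A}\mtens\Z_{k_B}$ holds at every internal node, and you are simply spelling out that every generator $\bmtens_{k\in[m]}\X_k$ of $\sep(\vd)$ (via \Cref{prop.state}) yields feasible auxiliary variables, then invoking convexity. The tree bookkeeping, the trace/PSD checks, and the observation that \eqref{eq.rmmr} is the path-shaped special case of \eqref{eq.dmmr} are all correct and match the paper's intent.

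The one step that does not go through as written is the DPS membership. Your witness $\Z_{k_A}\mtens(\Z_{k_B})^{\mtens\ell}$ commutes with permutations of the $\ell$ copies of the $B$-system, but the symmetry condition \eqref{eq:extensionsymmetrycond} as stated requires the extension to be \emph{supported on} the symmetric subspace, i.e.\ $(\Id\otimes\Pi)\rho(\Id\otimes\Pi)=\rho$, and this fails for $(\Z_{k_B})^{\mtens\ell}$ whenever $\Z_{k_B}$ is mixed (e.g.\ $(\Id/2)^{\mtens 2}$ has weight on the antisymmetric singlet). In the DDPS tree the internal-node factors $\Z_{k_B}=\bmtens_{k'\in L_{k_B}}\X_{k'}$ are mixed in general, so this is not a vacuous concern. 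The fix is one line: spectrally decompose $\Z_{k_B}=\sum_j q_j\, v_j v_j^\dagger$ and use the extension $\sum_j q_j\,\Z_{k_A}\mtens(v_j v_j^\dagger)^{\mtens\ell}$, which is Bose-symmetric, PSD, PPT under any partial transpose, and has the correct partial trace. (To be fair, the paper's own presentation of the DPS extension writes $\rho_A\mtens(\rho_B)^{\mtens\ell}$ and calls permutation-invariance and the projector condition ``equivalent'', which they are not; your step inherits that imprecision rather than introducing a new error.)
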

\begin{proof}
    It suffices to prove for \eqref{eq.dmmr}. It is easy to show that \eqref{eq.dmmr} relaxes the lifted reformulation
    \begin{subequations}
\begin{align}
&        &   \Z_{k_0} = & \y,\\
& \forall k \in S',     &   \Z_{k} = & \X_k, \\
& \forall k \in S,        &  \Z_k \succeq & 0, \\
& \forall k \in S,      &  \tr(\Z_k) = & 1,\\
& \forall k \in S\setminus S',      &     \Z_{k}=  &  \Z_{k_A} \mtens \Z_{k_B}.
\end{align}
of $\y = \bmtens_{k \in [m]} \X_k$.
\end{subequations}
\end{proof}

The level of the DPS outer approximation is determined by the level $\ell$ used in each $\dps_{\ell}$ constraint.   The experiments in \Cref{sec.numerical} show that, even for relatively small bipartite systems such as $d_{AB}=(8,4)$, taking $\ell=3$ already produces SDPs that exceed the memory limits of some solvers. Hence, high-level DPS constraints are not scalable for use inside an sBB algorithm.

In our implementation, we use the first-level DDPS outer approximation (implemented via a balanced binary tree) for the SDP relaxation inside the sBB LMO.  In this way, the lower bounds and convex relaxations of \eqref{eq.sep} are tightened by partitioning the variable space.

\subsubsection{Complex McCormick inequalities for multipartite systems}
\label{subsec.complex}
\label{sec.rel3}
The conventional sBB method typically branches on univariate or bilinear expressions so that the relaxations of these expressions tighten as variable ranges shrink \cite{belotti2009branching}. The DPS constraints derived from unnormalized separable states do not depend on variable bounds. Therefore, we choose to work with the normalized BSS formulation, which allows us to derive variable bounds and LP relaxations that depend on those bounds.

 The bilinear equation $\Z_k = \Z_{k_A} \mtens \Z_{k_B}$ generally does not hold in the DDPS system \eqref{eq.dmmr}, and  we propose valid LP inequalities that relax the entry-wise bilinear equation in $\Z_k = \Z_{k_A} \mtens \Z_{k_B}$. These linear inequalities can be tightened as variable bounds are reduced by the branching rule that enforces the bilinear equation.

For any Hermitian matrix $\Z$, we separate real and imaginary parts and write $\Z = \U + \vi \V$, where $\U$ is real symmetric and $\V$ is real skew-symmetric. We first state a useful implication about density matrices.

\begin{theorem}
\label{thm.implication}
Let $\Z = \U + \vi \V \in \herm^d$. Consider the following statements:
\begin{enumerate}
    \item $\Z \succeq 0$ and $\tr(\Z) = 1$;
    \item $\Z \succeq 0$ and $\Z \preceq \Id$;
    \item every $2\times2$ principal submatrix $\Z' = \U' + \vi \V'$ of $\Z$ satisfies $\Z' \succeq 0$ and $\Z' \preceq \Id$;
    \item for every $2\times2$ principal submatrix $\Z' = \U' + \vi \V'$ of $\Z$ we have $0 \le U'_{ii} \le 1$, $V'_{ii} = 0$ for $i\in\{1,2\}$, and $-1 \le V'_{ij} \le 1$, $-1 \le U'_{ij} \le 1$ for $i\ne j$.
\end{enumerate}
Then the implications $1 \implies 2 \implies 3 \implies 4$ hold.
\end{theorem}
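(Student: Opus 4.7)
The plan is to prove the three implications in order, each relying on elementary linear-algebra facts: the spectral theorem, the principal-submatrix property of PSD matrices, and a direct calculation on $2\times 2$ Hermitian blocks.

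For $1\Rightarrow 2$, I would invoke the spectral theorem to diagonalize the Hermitian matrix $\Z$. Since $\Z\succeq 0$, all eigenvalues are nonnegative; since $\tr(\Z)=1$, they sum to one, hence each eigenvalue lies in $[0,1]$. This immediately gives $\Id-\Z\succeq 0$, i.e.\ $\Z\preceq\Id$.

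For $2\Rightarrow 3$, I would use the standard fact that any principal submatrix of a PSD matrix is PSD, since it corresponds to $E^\dagger \Z E$ for a coordinate-selection matrix $E$. Applying this to both $\Z$ and $\Id-\Z$ gives $\Z'\succeq 0$ and $\Id-\Z'\succeq 0$, where $\Id$ on the right now denotes the $2\times 2$ identity (this is the only notational subtlety to flag).

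For $3\Rightarrow 4$, I would write out the $2\times 2$ block explicitly. Hermiticity forces the diagonal entries of $\Z'$ to be real, so $V'_{ii}=0$; combined with $\Z'\succeq 0$ and $\Z'\preceq \Id$ it gives $0\le U'_{ii}\le 1$. For the off-diagonal, write
\begin{equation*}
\Z' = \begin{pmatrix} U'_{11} & U'_{12}+\vi V'_{12} \\ U'_{12}-\vi V'_{12} & U'_{22} \end{pmatrix},
\end{equation*}
and take the determinant: $\det(\Z')=U'_{11}U'_{22}-(U'_{12})^2-(V'_{12})^2\ge 0$ because $\Z'\succeq 0$ is a $2\times 2$ matrix with nonnegative trace. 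Hence $(U'_{12})^2+(V'_{12})^2\le U'_{11}U'_{22}\le 1$, which yields $|U'_{12}|\le 1$ and $|V'_{12}|\le 1$.

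The argument is essentially routine; the only place that warrants care is item~4, where one must keep track of the conventions for real/skew-symmetric parts of a Hermitian matrix and note that the determinant bound is tightened by $U'_{ii}\le 1$ inherited from $\Z'\preceq\Id$. No deep obstacle arises, and the proof is short.
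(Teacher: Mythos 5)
Your proof is correct. The first two implications coincide with the paper's argument (spectral theorem for $1\Rightarrow 2$; principal submatrices of $\Z$ and $\Id-\Z$ for $2\Rightarrow 3$). For $3\Rightarrow 4$ you take a genuinely different route: you bound the off-diagonal entries via the determinant condition $\det(\Z')=U'_{11}U'_{22}-(U'_{12})^2-(V'_{12})^2\ge 0$ combined with $U'_{ii}\le 1$, which in fact yields the stronger disc constraint $(U'_{12})^2+(V'_{12})^2\le 1$ (note the determinant of a PSD matrix is nonnegative outright, as the product of its eigenvalues; the ``nonnegative trace'' clause in your justification is superfluous). The paper instead derives each bound in item 4 as a \emph{linear} trace inequality, testing $0\preceq\Z'\preceq\Id$ against the matrices $E_i$, the all-ones matrix $E$, and $E'=[1,1;-1,1]$, obtaining for instance $-(U'_{11}+U'_{22})/2\le U'_{12}\le (2-(U'_{11}+U'_{22}))/2$. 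The distinction matters downstream: the subsequent theorem showing that the tensor McCormick inequalities imply the aggregated McCormick inequalities explicitly reuses the fact that each scalar bound in item 4 arises as a trace functional applied to $0\preceq\Z\preceq\Id$, so that products of such bounds can be rewritten via tensor products of the matrix constraints. Your quadratic determinant certificate, while shorter and slightly stronger for the theorem at hand, does not expose that linear structure, so it could not be substituted into the later argument without reworking it.
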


\begin{proof}
If $\Z\succeq0$ and $\tr(\Z)=1$, diagonalize $\Z$ by a unitary $T$ so that $T\Z T^\dagger$ is diagonal with entries in $[0,1]$. Hence $T\Z T^\dagger \preceq \Id$, and by congruence with a unitary we get $\Z \preceq \Id$. This proves $1\!\implies\!2$.

If $\Z \succeq 0$ and $\Id-\Z\succeq0$, then every principal $2\times2$ submatrix of both matrices is PSD, so $2\!\implies\!3$.

Assume $3$ holds.  Note that  $U'_{ii} \ge 0, V'_{ii} = 0$ ($i \in \{1,2\}$) follows from the Hermitian PSD of $\U'+\vi \V'$. Let $E_i$ be the diagonal matrix with only $i$-th diagonal being one, then $ U'_{ii} = \tr[\Z' E_i ] \le \tr[\Id E_i] = 1$. Let the all-ones matrix be $E$, then $0  = \tr[0  E] \le  \tr[\Z' E]   \le \tr[\Id  E]  =  2 $. Since $\tr[\V'  E] = 0$, $ \tr[\Z' E]   = \tr[\U' E]   =  U'_{11} + U'_{22} + 2 U'_{12}  $. Then, we have $-(U'_{11} + U'_{22}) /2 \le   U'_{12} \le (2 - (U'_{11} + U'_{22}))/2$. Let  $E'=[1, 1; -1, 1]$, then $0  = \tr[0  E'] \le  \tr[\Z' E']   \le \tr[\Id  E']  =  2 $. Then, $ \tr[\Z' E]   = \tr[\U' E]   =  U'_{11} + U'_{22} + 2 V'_{12}  $. Then, we have $-(U'_{11} + U'_{22}) /2 \le   V'_{12} \le (2 - (U'_{11} + U'_{22}))/2$. Then, the assumption that $\Z' \succeq 0$ and $\Z' \preceq \Id$ implies that
    \begin{equation}
    \label{eq.implieq}
            \begin{split}
            Z' \in \herm^d & \implies 0 \le V'_{ii} \le 0, \\
                \tr[0  E_i] \le \tr[\Z' E_i]   \le   \tr[\Id  E_i]  & \implies   0 \le   U'_{ii} \le 1, \\
    \tr[0  E] \le \tr[\Z' E]   \le   \tr[\Id  E]  & \implies  -1 \le  \frac{-U'_{11} - U'_{22}}{2} \le   U'_{12} \le \frac{2 - (U'_{11} + U'_{22})}{2} \le 1, \\
  \tr[0  E'] \le \tr[\Z' E']   \le \tr[\Id  E']   & \implies -1 \le \frac{-U'_{11} - U'_{22}}{2} \le   V'_{12} \le \frac{2 - (U'_{11} + U'_{22})}{2}  \le 1.
    \end{split}
    \end{equation}
     Thus, $3 \implies 4$.
\end{proof}

We now consider entries of $\Z_k=\U_k+\vi\V_k$, which satisfy condition 1 in \Cref{thm.implication}. Let $\overline{\U}_k,\overline{\V}_k$ (resp. $\underline{\U}_k,\underline{\V}_k$) denote entry-wise upper (resp. lower) bounds implied by condition 4. Define the index mapping $\delta(i_A,i_B)=i_A d_{k_B}+i_B$. The entries of $\Z_k$ can be written as bilinear products of entries of $\Z_{k_A}$ and $\Z_{k_B}$:
\begin{equation}
\label{eq.zk}
\begin{split}
& Z_{k,\delta(i_A,i_B),\delta(j_A,j_B)} \\
&\qquad = (U_{k_A,i_A j_A} + \vi V_{k_A,i_A j_A})(U_{k_B,i_B j_B} + \vi V_{k_B,i_B j_B}) \\
&\qquad = U_{k_A,i_A j_A}\,U_{k_B,i_B j_B} - V_{k_A,i_A j_A}\,V_{k_B,i_B j_B} \\
&\qquad\quad + \vi\bigl(V_{k_A,i_A j_A}\,U_{k_B,i_B j_B} + U_{k_A,i_A j_A}\,V_{k_B,i_B j_B}\bigr).
\end{split}
\end{equation}

The convex envelope of a real bilinear term $W_A W_B$ over the box $[\underline{W}_A,\overline{W}_A]\times[\underline{W}_B,\overline{W}_B]$ is given by McCormick inequalities \cite{mccormick1976computability}:
\begin{subequations}
\label{eq.mcccs}
\begin{align}
W_A W_B  &\ge  \overline{W}_B W_A + \overline{W}_A W_B - \overline{W}_A \overline{W}_B, \\
W_A W_B  &\ge  \underline{W}_B W_A + \underline{W}_A W_B - \underline{W}_A \underline{W}_B, \\
W_A W_B  &\le  \underline{W}_B W_A + \overline{W}_A W_B - \overline{W}_A \underline{W}_B, \\
W_A W_B  &\le  \overline{W}_B W_A + \underline{W}_A W_B - \underline{W}_A \overline{W}_B.
\end{align}
\end{subequations}
These inequalities can also be obtained via the scalar RLT procedure in \cite{sherali1992global}. Denote the sets of affine under- and over-estimators of $W_A W_B$ by $\underline{W_A W_B}$ and $\overline{W_A W_B}$, respectively.

For brevity set
\[
u_A = U_{k_A,i_A j_A},\quad u_B = U_{k_B,i_B j_B},\quad
v_A = V_{k_A,i_A j_A},\quad v_B = V_{k_B,i_B j_B},
\]
and
\[
\mu_{AB} = U_{k,\delta(i_A,i_B),\delta(j_A,j_B)},\quad
\nu_{AB} = V_{k,\delta(i_A,i_B),\delta(j_A,j_B)}.
\]
Applying McCormick to the four real bilinear terms $u_A u_B$, $v_A v_B$, $v_A u_B$, and $u_A v_B$ and combining them yields a collection of linear inequalities (aggregated McCormick inequalities):
\begin{subequations}
\label{eq.mccons}
\begin{align}
&\forall a\in\underline{u_A u_B},\ \forall a'\in\overline{v_A v_B}:\quad
\mu_{AB} \ge a(u_A,u_B) - a'(v_A,v_B), \label{eq.mca}\\
&\forall a\in\underline{u_A u_B},\ \forall a'\in\overline{v_A v_B}:\quad
\mu_{AB} \le a(u_A,u_B) - a'(v_A,v_B),\\
&\forall a\in\underline{u_A v_B},\ \forall a'\in\underline{v_A u_B}:\quad
\nu_{AB} \ge a(u_A,v_B) + a'(v_A,u_B),\\
&\forall a\in\overline{u_A v_B},\ \forall a'\in\overline{v_A u_B}:\quad
\nu_{AB} \le a(u_A,v_B) + a'(v_A,u_B).
\end{align}
\end{subequations}

Relaxing the entry-wise equations in $\Z_k=\Z_{k_A}\mtens\Z_{k_B}$ by these inequalities yields an LP relaxation for the BSS problem.

\subsubsection{Tensor RLT procedure and inequalities}
\label{sec.rel4}
We now introduce valid SDP constraints derived from a tensor RLT procedure for PSD variables. The construction generalizes the scalar RLT and is based on tensor products.

The tensor RLT on two SDP constraints appears in \cite{anstreicher2017kronecker,sturm2003cones}. We first review and apply it to the proposed relaxations, and then extend it for multiple SDP constraints. Let $F(\Z_{k_A})$ and $H(\Z_{k_B})$ be two linear matrix-valued maps acting on PSD variables $\Z_{k_A}$ and $\Z_{k_B}$. If
$F(\Z_{k_A}) \succeq 0$ and $H(\Z_{k_B}) \succeq 0$, then by Proposition 1.1 of \cite{anstreicher2017kronecker} the tensor product
$F(\Z_{k_A}) \mtens H(\Z_{k_B}) \succeq 0$ also holds. The linearization step rewrites
$F(\Z_{k_A}) \mtens H(\Z_{k_B})$ as a linear map in the variables $\Z_{k_A}$, $\Z_{k_B}$ and the auxiliary variable $\Z_k$ representing $\Z_{k_A}\mtens\Z_{k_B}$, yielding a valid SDP inequality. The tensor product $\Z_{k_A}\mtens\Z_{k_B}$ is then replaced by the auxiliary PSD variable $\Z_k$ (the so-called RLT variable).

Applying this construction to the PSD variables appearing in the dichotomy DPS relaxation leads to the following tensor McCormick inequalities:
\begin{subequations}
\label{eq.psd}
\begin{align}
    \Z_{k} &\preceq  \Id \mtens \Z_{k_B}, \\
     \Z_{k} &\preceq \Z_{k_A} \mtens \Id, \\
     \Z_{k} &\succeq \Id \mtens \Z_{k_B} + \Z_{k_A} \mtens \Id - \Id,
\end{align}
\end{subequations}
which are valid whenever $\Z_k=\Z_{k_A}\mtens\Z_{k_B}$ and the factors are density matrices.

\begin{proposition}
Given density matrices $\Z_k, \Z_{k_A}, \Z_{k_B}$, the tensor McCormick inequalities \eqref{eq.psd} are valid consequences of the equality $\Z_k=\Z_{k_A}\mtens\Z_{k_B}$.
\end{proposition}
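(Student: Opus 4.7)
The plan is to apply the tensor RLT construction described immediately above the proposition. The key observation is that since $\Z_{k_A}$ and $\Z_{k_B}$ are density matrices, they satisfy $\Z_{k_A}, \Z_{k_B}\succeq 0$ \emph{and} $\Id-\Z_{k_A}, \Id-\Z_{k_B}\succeq 0$; the latter is exactly the $1\Rightarrow 2$ implication of \Cref{thm.implication}, namely that a unit-trace PSD matrix is bounded above by the identity. Given these four PSD matrices, I form three Kronecker products whose positive semidefiniteness follows from Proposition~1.1 of \cite{anstreicher2017kronecker} (PSD-ness is preserved under tensor products).

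First, I would expand $(\Id-\Z_{k_A})\mtens\Z_{k_B}\succeq 0$ using bilinearity of $\mtens$ to obtain $\Id\mtens\Z_{k_B}-\Z_{k_A}\mtens\Z_{k_B}\succeq 0$, and then linearize by substituting the RLT variable $\Z_k$ for $\Z_{k_A}\mtens\Z_{k_B}$ (which is legitimate precisely because the hypothesis asserts $\Z_k=\Z_{k_A}\mtens\Z_{k_B}$). This yields the first inequality $\Z_k\preceq\Id\mtens\Z_{k_B}$. Symmetrically, $\Z_{k_A}\mtens(\Id-\Z_{k_B})\succeq 0$ gives $\Z_k\preceq\Z_{k_A}\mtens\Id$.

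For the lower bound, I would consider the product of the two ``complementary'' factors $(\Id-\Z_{k_A})\mtens(\Id-\Z_{k_B})\succeq 0$. Distributing yields $\Id-\Id\mtens\Z_{k_B}-\Z_{k_A}\mtens\Id+\Z_{k_A}\mtens\Z_{k_B}\succeq 0$, and substituting $\Z_k$ for the last term produces $\Z_k\succeq\Id\mtens\Z_{k_B}+\Z_{k_A}\mtens\Id-\Id$, which is the third inequality in \eqref{eq.psd}.

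Because each step either invokes Proposition~1.1 of \cite{anstreicher2017kronecker} for a tensor product of PSD matrices or uses the hypothesis $\Z_k=\Z_{k_A}\mtens\Z_{k_B}$ to linearize, there is no real obstacle. The only point worth double-checking is that the Kronecker product ordering implicit in the definition of $\Z_k$ matches the ordering in the expressions $\Id\mtens\Z_{k_B}$ and $\Z_{k_A}\mtens\Id$, so that the substitution step is syntactically correct; this is purely a notational matter fixed by the index mapping $\delta$ introduced in \Cref{sec.rel3}.
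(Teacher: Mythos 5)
Your proof is correct and follows essentially the same route as the paper: both derive $\Id-\Z_{k_A}\succeq 0$ and $\Id-\Z_{k_B}\succeq 0$ from \Cref{thm.implication}, take tensor products of the four factor constraints (PSD-ness being preserved under Kronecker products), expand by bilinearity, and linearize $\Z_{k_A}\mtens\Z_{k_B}$ to $\Z_k$ using the hypothesis. If anything, you are more explicit than the paper, which only lists the first two factor combinations and writes ``and so on'' where you spell out that $(\Id-\Z_{k_A})\mtens(\Id-\Z_{k_B})\succeq 0$ yields the third inequality.
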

\begin{proof}
By \Cref{thm.implication} we may use the factor constraints
$\Z_{k_A}\succeq0,\ \Z_{k_B}\succeq0,\ \Id-\Z_{k_A}\succeq0,\ \Id-\Z_{k_B}\succeq0$.
Taking tensor products of suitable factor combinations yields PSD constraints such as
\[
\Z_{k_A}\mtens\Z_{k_B}\succeq0,\qquad
(\Id-\Z_{k_A})\mtens\Z_{k_B}\succeq0,\qquad
\Z_{k_A}\mtens(\Id-\Z_{k_B})\succeq0,
\]
and so on. Expanding these tensor-factor constraints and then linearizing the terms that involve $\Z_{k_A}\mtens\Z_{k_B}$ produces matrix inequalities of the form \eqref{eq.psd}. Thus \eqref{eq.psd} follows from the factor constraints and the identity $\Z_k=\Z_{k_A}\mtens\Z_{k_B}$.
\end{proof}

 One can show that the tensor McCormick inequalities are stronger than the aggregated (entrywise) McCormick inequalities \eqref{eq.mccons}.

\begin{theorem}
If density matrices $\Z_k,\Z_{k_A},\Z_{k_B}$ satisfy the tensor McCormick inequalities \eqref{eq.psd}, then the aggregated McCormick inequalities \eqref{eq.mccons} also hold.
\end{theorem}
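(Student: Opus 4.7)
My plan is to reduce the claim to a $4\times 4$ bipartite subproblem indexed by any fixed off-diagonal coordinate pair, and then derive the scalar McCormick inequalities as linear scalar consequences of the reduced matrix PSD inequalities. Fix indices $(i_A,j_A)$ and $(i_B,j_B)$ with $i_A\ne j_A$ and $i_B\ne j_B$, and restrict each inequality in \eqref{eq.psd} to the $4\times 4$ principal submatrix at rows/columns $\{\delta(i_A,i_B),\delta(i_A,j_B),\delta(j_A,i_B),\delta(j_A,j_B)\}$. By the Kronecker-product structure, the corresponding submatrices of $\Id \mtens \Z_{k_B}$ and $\Z_{k_A} \mtens \Id$ factor as $\Id_2 \mtens \tilde B$ and $\tilde A \mtens \Id_2$, where $\tilde A$ and $\tilde B$ are the $2\times 2$ principal submatrices of $\Z_{k_A}$ and $\Z_{k_B}$ at the chosen indices. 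Since $\Z_{k_A}$ and $\Z_{k_B}$ are density matrices, \Cref{thm.implication} gives $\tilde A, \tilde B \succeq 0$ and $\tilde A, \tilde B \preceq \Id_2$, from which the entrywise box bounds $\underline{u}_A \le u_A \le \overline{u}_A$, $\underline{v}_A \le v_A \le \overline{v}_A$, etc., that appear in \eqref{eq.mccons} follow.

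Next, I would extract scalar inequalities by testing each reduced matrix PSD inequality against rank-one PSD witnesses $\ket{w}\bra{w}$ with $\ket{w}$ supported on the four basis vectors. A non-product witness of the form $\ket{w} = e_{\delta(i_A,i_B)} + \alpha\, e_{\delta(j_A,j_B)}$ with $\alpha\in\{1,-1,\vi,-\vi\}$ cleanly isolates $\mu_{AB}$ or $\nu_{AB}$ on the $\Z_k$ side, since the off-diagonal Kronecker entries $[\Id_2\mtens\tilde B]$ and $[\tilde A\mtens\Id_2]$ at the index pair $(\delta(i_A,i_B),\delta(j_A,j_B))$ both vanish when $i_A\ne j_A$ and $i_B\ne j_B$. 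Complementary product witnesses $\ket{w}=\ket{\phi}\mtens\ket{\psi}$ with $\ket{\phi}\in\mathrm{span}\{e_{i_A},e_{j_A}\}$ and $\ket{\psi}\in\mathrm{span}\{e_{i_B},e_{j_B}\}$ inject the off-diagonal scalars $u_A,v_A,u_B,v_B$ through the simplified tests $\bra{\phi}\tilde A\ket{\phi}$ and $\bra{\psi}\tilde B\ket{\psi}$. A suitable linear combination of the scalar inequalities obtained from the three matrix PSDs, with phases and amplitudes in $\ket{\phi},\ket{\psi}$ chosen so that the factor evaluations reduce to the extremal values appearing in the McCormick affine under- and over-estimators, gives scalar relations of the form $\mu_{AB}\gtrless a(u_A,u_B)-a'(v_A,v_B)$ and $\nu_{AB}\gtrless a(u_A,v_B)+a'(v_A,u_B)$.

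The main obstacle is the careful bookkeeping needed to match the scalar combinations obtained from the three tensor McCormick PSDs with the four specific affine forms prescribed in \eqref{eq.mccons}. Each of the four aggregated inequalities corresponds to a choice of extremal entries (e.g., $\{\underline{u}_A,\overline{u}_A\}\times\{\underline{u}_B,\overline{u}_B\}$), which in turn corresponds to a specific phase–amplitude selection in $\ket{\phi}$ and $\ket{\psi}$. The two lower bounds on $\mu_{AB}$ and on $\nu_{AB}$ arise from the inequality $\Z_k\succeq\Id\mtens\Z_{k_B}+\Z_{k_A}\mtens\Id-\Id$, while the two upper bounds arise from $\Z_k\preceq\Id\mtens\Z_{k_B}$ and $\Z_k\preceq\Z_{k_A}\mtens\Id$. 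Once the correspondence is established for one pair $(a,a')$, the remaining three follow by evident symmetries of the witness parameterization (swapping $i_A\leftrightarrow j_A$, conjugation $\alpha\mapsto\bar\alpha$, and sign flips), yielding the full family \eqref{eq.mccons}.
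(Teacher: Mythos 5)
Your setup is sound as far as it goes: restricting \eqref{eq.psd} to the $4\times 4$ principal block, the factorizations $\Id_2\mtens\tilde B$ and $\tilde A\mtens\Id_2$, and the box bounds via \Cref{thm.implication} are all correct, and testing the matrix inequalities against PSD witnesses is the right dual picture. The gap is that the step carrying all the content --- exhibiting the four families in \eqref{eq.mccons} as explicit nonnegative combinations --- is asserted rather than executed, and the structural claims you make about how it would go do not survive the bookkeeping. First, the non-product witnesses $e_{\delta(i_A,i_B)}+\alpha\,e_{\delta(j_A,j_B)}$ do not ``cleanly isolate'' $\mu_{AB}$ or $\nu_{AB}$: besides the vanishing off-diagonal Kronecker entries they pick up the diagonal entries $[Z_k]_{\delta(i_A,i_B),\delta(i_A,i_B)}$ and $[Z_k]_{\delta(j_A,j_B),\delta(j_A,j_B)}$ of $\Z_k$ together with diagonal entries of $\tilde A,\tilde B$; none of these quantities occur in \eqref{eq.mccons}, and your proposal gives no mechanism for cancelling them against what the product witnesses contribute. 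Second, the correspondence claimed in your last paragraph (lower bounds from $\Z_k\succeq\Id\mtens\Z_{k_B}+\Z_{k_A}\mtens\Id-\Id$, upper bounds from the two $\preceq$ inequalities) is not correct: the entrywise bounds on $u_A,v_A,u_B,v_B$ are two-sided on $[-1,1]$, and each one-sided bound such as $1-u_A\ge0$ is certified only by a mixture, e.g.\ $2(1-u_A)=\tr[(\Id-\Z_{k_A})\1\1^\dagger]+\tr[\Z_{k_A}P_A]$ with $\1$ the indicator of $\{i_A,j_A\}$ and $P_A$ the coordinate projector; consequently the natural certificate for \emph{each} aggregated inequality draws on all three inequalities in \eqref{eq.psd} together with $\Z_k\succeq0$, not on a single one. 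You also never account for the partial-transpose entry $[Z_k]_{\delta(i_A,j_B),\delta(j_A,i_B)}$ (the linearization of $u_Au_B+v_Av_B$), which unavoidably enters any witness combination producing $u_Au_B$ or $v_Av_B$ separately and must cancel between the real-part and imaginary-part certificates.

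The paper sidesteps these issues by arguing at the level of trace certificates rather than individual witnesses: each aggregated inequality is the linearization of a product of two scalar factor constraints such as $(u_A-\underline{u}_A)(u_B-\underline{u}_B)\ge0$; each scalar bound is a nonnegative trace functional of $(\Z_{k_A},\Id-\Z_{k_A})$, resp.\ $(\Z_{k_B},\Id-\Z_{k_B})$, taken from the proof of \Cref{thm.implication}; and the identity $\tr[D_1\Z_{k_A}]\,\tr[D_2\Z_{k_B}]=\tr[(D_1\mtens D_2)(\Z_{k_A}\mtens\Z_{k_B})]$ turns the product of the two certificates into a sum of traces of PSD Kronecker test matrices against the four tensor factors, whose linearizations are exactly \eqref{eq.psd} and $\Z_k\succeq0$. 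The extraneous entries cancel automatically because the product of certificates equals, by construction, the desired bilinear expression. If you wish to keep a witness-based presentation, you should discard the non-product witnesses, take instead the rank-one decompositions of these Kronecker test matrices (all of which are product witnesses $\phi\mtens\psi$ with $\phi$ supported on $\{e_{i_A},e_{j_A}\}$ of the form $e_{i_A}$, $e_{j_A}$, $(1,\pm1)$, or $(1,\pm\vi)$, and similarly for $\psi$), handle the diagonal cases $i_A=j_A$ or $i_B=j_B$ separately, and write out the combination explicitly for at least one of the four inequality families.
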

\begin{proof}
By symmetry it suffices to derive \eqref{eq.mca} from \eqref{eq.psd}. The aggregated McCormick inequalities are linearizations of bilinear factor constraints such as
$(u_A-\underline{u}_A)(u_B-\underline{u}_B)\ge0$ and similar expressions for the imaginary parts. According to the proof in \Cref{thm.implication}, each scalar bound like $u_A\ge\underline{u}_A$ is implied by trace inequalities applying suitable projections against $0 \preceq \Z_{k_A} \preceq \Id$ and $0 \preceq \Z_{k_B} \preceq \Id$; likewise for $u_B$. Suppose   $u_A\ge\underline{u}_A$ is implied by $\tr[E_{1}\Z_{k_A}] + \tr[E_2(\Id - \Z_{k_A})] + \tr[E_{2}\Z_{k_B}] + \tr[E_4(\Id - \Z_{k_B})] \ge 0$ and $u_B\ge\underline{u}_B$ is implied by  $\tr[E_{5}\Z_{k_A}] + \tr[E_6(\Id - \Z_{k_A})] + \tr[E_{7}\Z_{k_B}] + \tr[E_8(\Id - \Z_{k_B})] \ge 0$. Thus, $(u_A-\underline{u}_A)(u_B-\underline{u}_B)\ge0$ is implied by the product inequality $(\tr[E_{1}\Z_{k_A}] + \tr[E_2(\Id - \Z_{k_A})] + \tr[E_{2}\Z_{k_B}] + \tr[E_4(\Id - \Z_{k_B})] ) (\tr[E_{5}\Z_{k_A}] + \tr[E_6(\Id - \Z_{k_A})] + \tr[E_{7}\Z_{k_B}] + \tr[E_8(\Id - \Z_{k_B})] ) \ge 0$. Using rules $\tr[D_1 \Z_1] \tr[D_2 \Z_2] = \tr[(D_1 \mtens D_2)(\Z_1 \mtens \Z_2)]$, we can rewrite the  product inequality as the sum of trace inequalities applying suitable projections against tensor McCormick inequalities in \eqref{eq.psd}.
\end{proof}

Note that scalar bounds implied by $\Id\succeq\Z\succeq0$ do not yield RLT inequalities stronger than those produced by the tensor RLT.  Therefore, the convex outer approximation $\cC$ is defined by constraints in the DDPS system \eqref{eq.rmmr} and the scalar and tensor McCormick inequalities.

We now present the tensor RLT procedure on multiple SDP constraints, thus fully generalizing the Sherali-Adams RLT for scalar polynomials \cite{sherali2007rlt}. Consider the alphabet of matrix symbols
\[
\fZ \deq \{\Z_k,\Id_k\}_{k\in N},
\]
and its extension
\[
\bar{\fZ}\deq\{\Z_k,\Z_k^\top,\Id_k,\tr(\Z_k),\tr(\Z_k^\top)\}_{k\in N},
\]
where $\Z_k^\top$ denotes the transpose. Let $\langle\fZ\rangle$ be the family of finite ordered multisets over $\fZ$. Each multiset $\fS\in\langle\fZ\rangle$ gives rise to a tensor monomial $\bmtens_{\Z\in\fS}\Z$, and the monomials are in general noncommutative.

We extend the partial trace and partial transpose maps to act on tensor monomials. For $M\in\bmtens_{k\in[m]}\C^{d_k\times d_k}$ and a subset $S\subseteq[m]$, the partial trace $\tr_S$ and partial transpose $\Tr_S$ are defined entry-wise by the usual index contractions and index swaps, respectively:
\begin{align}
\label{eq.trace}
\tr_S\bigl([M_{i_1,j_1,\dots,i_m,j_m}]\bigr)
&= [\sum_{i_k,j_k:k\in S} M_{i_1,j_1,\dots,i_m,j_m}],\\
\label{eq.transpose}
\Tr_S\bigl([M_{i_1,j_1,\dots,i_m,j_m}]\bigr)
&= [M_{\bar i_1,\bar j_1,\dots,\bar i_m,\bar j_m}],
\end{align}
where $\bar{i}_k = \begin{cases}
    i_k\quad k \notin S\\
    j_k\quad k \in S
\end{cases}$, and $\bar{j}_k = \begin{cases}
    j_k\quad k \notin S\\
    i_k\quad k \in S
\end{cases}$ (indices are swapped on the subsystems in $S$).

The following rewriting lemma is useful.

\begin{theorem}
\label{thm.rewrite}
Any tensor monomial $\bar{\fS}\in\langle\bar{\fZ}\rangle$ can be written as $\tr_{S_A}\!\bigl(\Tr_{S_B}(\fS)\bigr)$ for some $\fS\in\langle\fZ\rangle$ and suitable index sets $S_A,S_B$.
\end{theorem}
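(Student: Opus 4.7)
The plan is to establish the claim first for single-symbol monomials and then extend to monomials of arbitrary length by induction on the number of factors, exploiting the distributivity of $\tr_S$ and $\Tr_S$ over the tensor product on disjoint subsystems.

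For the atomic cases, observe that when $\fS$ is a single-factor monomial, $\Tr_{\{1\}}$ coincides with the ordinary matrix transpose and $\tr_{\{1\}}$ with the ordinary trace. Therefore each symbol in $\bar{\fZ}$ can be handled individually: $\Z_k$ and $\Id_k$ already lie in $\langle\fZ\rangle$ with $S_A = S_B = \emptyset$; $\Z_k^\top = \Tr_{\{1\}}(\Z_k)$ corresponds to $S_A=\emptyset$, $S_B=\{1\}$; $\tr(\Z_k) = \tr_{\{1\}}(\Z_k)$ corresponds to $S_A=\{1\}$, $S_B=\emptyset$; and $\tr(\Z_k^\top)$ reduces to $\tr(\Z_k)$ by invariance of the trace under transposition, so it is handled by the previous case.

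For the inductive step, suppose $\bar{\fS}_1$ and $\bar{\fS}_2$ have already been rewritten as $\tr_{S_A^1}(\Tr_{S_B^1}(\fS_1))$ and $\tr_{S_A^2}(\Tr_{S_B^2}(\fS_2))$, where $\fS_i$ occupies positions $[n_i]$. Because $\tr_S$ and $\Tr_S$ act only on the subsystems indexed by $S$, they distribute over the tensor product on complementary index sets, so that
\[
\bar{\fS}_1 \mtens \bar{\fS}_2 \;=\; \tr_{S_A^1 \cup (n_1 + S_A^2)}\!\Bigl(\Tr_{S_B^1 \cup (n_1 + S_B^2)}(\fS_1 \mtens \fS_2)\Bigr),
\]
where $n_1 + S$ denotes the shift of $S$ by $n_1$. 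Induction on the length of $\bar{\fS}$ then yields the claim.

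The main bookkeeping obstacle is the index relabeling: one must shift the trace and transpose index sets by the length of the preceding monomial so that they act on the correct subsystems of the concatenated tensor. One should also verify that when a single position belongs to both $S_A$ and $S_B$---as happens when $\tr(\Z_k^\top)$ is rewritten via $\tr(\Z_k^\top)=\tr(\Z_k)$---the order in which the partial trace and partial transpose are applied is immaterial, which follows directly from $\tr(M)=\tr(M^\top)$ applied to the affected subsystem.
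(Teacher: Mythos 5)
Your proposal is correct and follows essentially the same approach as the paper's proof: the paper simply replaces each trace symbol by a partial trace over its subsystem (collecting $S_A$) and each transpose symbol by a partial transpose (collecting $S_B$), which is exactly the symbol-by-symbol replacement you formalize via induction on the length of the monomial. Your version is merely more explicit about the index-shifting bookkeeping and the commutation of $\tr_{S_A}$ with $\Tr_{S_B}$, which the paper leaves implicit.
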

\begin{proof}
Replace every occurrence of a trace symbol $\tr(\cdot)$ by an explicit partial trace over the corresponding subsystem (this produces $\tr_{S_A}$). Then, replace every transpose symbol by a corresponding partial transpose $\Tr_{S_B}$. The remaining object is a tensor monomial in the original alphabet $\fZ$; thus the original monomial equals $\tr_{S_A}\bigl(\Tr_{S_B}(\fS)\bigr)$ for some $\fS\in\langle\fZ\rangle$.
\end{proof}

The tensor RLT constructs valid SDP inequalities by the following steps, applied to a finite multiset $\cF'$ of factor constraints drawn from
\[
\cF=\{\Z,\Id-\Z,\Z^\top,\Id-\Z^\top,\tr(\Z)-1,\tr(\Z^\top)-1:\Z\in\fZ\}.
\]

\begin{enumerate}
    \item (Constraint-factor) Form the tensor product $\bmtens_{f\in\cF'} f$, which is PSD (or zero) by construction.
    \item (Product expansion) Expand the tensor product as a linear combination of monomials in $\bar{\fZ}$:
    \[
      \bmtens_{f\in\cF'} f \;=\; \sum_{i=1}^\ell a_i\,\bar{\fS}_i.
    \]
    \item (Monomial rewriting) Apply Theorem \ref{thm.rewrite} to write each $\bar{\fS}_i$ as $\tr_{S_{A_i}}\bigl(\Tr_{S_{B_i}}(\fS_i)\bigr)$ with $\fS_i\in\langle\fZ\rangle$.
    \item (Linearization) Introduce an auxiliary matrix variable $X_{\fS_i}$ for each monomial $\fS_i$ and replace $\fS_i$ by $X_{\fS_i}$. This yields SDP constraints of the form
    \[
      \sum_{i=1}^\ell a_i\,\tr_{S_{A_i}}\!\bigl(\Tr_{S_{B_i}}(X_{\fS_i})\bigr)\succeq(\,=)\ 0,\qquad X_{\fS_i}\succeq0.
    \]
\end{enumerate}

These matrix inequalities are tensor RLT inequalities. The procedure generates many useful constraints; we illustrate it with a short example.

\begin{example}
Take $\cF'=(\Z_1,\Id-\Z_2,\tr(\Z_3^\top)-1)$. Expanding yields
\[
\Z_1\mtens\Id\mtens\tr(\Z_3^\top)-\Z_1\mtens\Z_2\mtens\tr(\Z_3^\top)-\Z_1\mtens\Id+\Z_1\mtens\Z_2.
\]
Rewriting and linearizing produce, for instance,
\[
\tr_{\{3\}}\Tr_{\{3\}}(X_{\Z_1,\Id,\Z_3})
-\tr_{\{3\}}\Tr_{\{3\}}(X_{\Z_1,\Z_2,\Z_3})
- X_{\Z_1,\Id} + X_{\Z_1,\Z_2} = 0,
\]
a valid tensor RLT equality in the lifted variables $X_{\cdot}$.
\end{example}

Finally, many DPS constraints can be recovered from tensor RLT constructions. For instance:
\begin{itemize}
    \item[(a)] \textit{Positivity}: Let $\cF'=(\rho_A) \cup (\rho_B)_{k \in [\ell]}$, then the corresponding RLT inequality is $\rho_{AB_{[\ell]}} \succeq 0$.
    \item[(b)] \textit{Partial Trace Projection}: Let $\cF'=(\rho_A,\rho_B) \cup (\tr(\rho_B) - 1)_{k \in [2:\ell]}$, then the corresponding RLT equality is $   \tr_{B_{[2:\ell]}} (\rho_{AB_{[\ell]}}) - \rho_{AB} = 0$.

    \item[(c)] \textit{PPT}: Let $\cF'=(\rho_A) \cup (\rho^\top_B)_{k \in [s]} \cup (\rho_B)_{k \in [s+1:\ell]}$, then the corresponding RLT inequality is $ \Tr_{B_{[s]}}(\rho_{AB_{[\ell]}}) \succeq 0$.
\end{itemize}

Hence the tensor RLT provides a unified framework that subsumes DPS-type constraints and generates additional structured SDP inequalities useful for relaxations of multipartite separability.

\subsection{Branching}
We now describe the branching rule used in our sBB algorithm.
At an open node of the sBB search tree, the branching rule selects an entry of some intermediate variable $\Z_k$ and tightens its bounds when creating the subnodes. A good branching choice should tighten the relaxations in the subnodes so that the parent node's relaxation solution becomes infeasible in at least one subnode; otherwise the subnodes will inherit the same lower bound and the tree search will not make progress. Such a branching rule is called violating, and it is important for the sBB algorithm to make consistent progress.

We propose a violating branching rule that targets the relaxed equation $\Z_k = \Z_{k_A}\mtens\Z_{k_B}$. Let the values of $\Z_k,\Z_{k_A},\Z_{k_B}$ in the parent node's relaxation solution be the reference values. The rule evaluates a violation score for each entry and then selects the entry with the highest score as the branching candidate. For instance, suppose we branch on the real entry $u_A = U_{k_A,i_A j_A}$ of $\Z_{k_A}$. Given its current bounds $[\underline{u_A},\overline{u_A}]$ and its reference value $u'_A\in[\underline{u_A},\overline{u_A}]$, we create two subnodes with bounds $[\underline{u_A},u'_A]$ and $[u'_A,\overline{u_A}]$. The McCormick inequalities \eqref{eq.mccons} are updated in each subnode using the new bounds.

The proposed rule is similar in spirit to strong branching for mixed-integer programs: we simulate splitting at the reference value for each candidate entry and estimate how much the reference values are violated by the updated McCormick inequalities \eqref{eq.mccons}. Concretely, consider the scalar variables that appear in a single entry relation \eqref{eq.zk}:
the real and imaginary parts $u_A,v_A,u_B,v_B$ and the linearized variables $\mu_{AB},\nu_{AB}$, with reference values $u'_A,v'_A,u'_B,v'_B,\mu'_{AB},\nu'_{AB}$. Splitting at $u'_A$ produces two subnodes; for each subnode we compute the amount by which the reference point violates the McCormick constraints. The per-constraint violations are
\begin{subequations}
\label{eq.mcconsviolate}
\begin{align}
\max\Bigl(\max_{a\in\underline{u_A u_B},\,a'\in\overline{v_A v_B}}
\bigl(a(u'_A,u'_B)-a'(v'_A,v'_B)\bigr)-\mu'_{AB},\,0\Bigr),\\[2pt]
\max\Bigl(\mu'_{AB}-\max_{a\in\underline{u_A u_B},\,a'\in\overline{v_A v_B}}
\bigl(a(u'_A,u'_B)-a'(v'_A,v'_B)\bigr),\,0\Bigr),\\[2pt]
\max\Bigl(\max_{a\in\underline{u_A v_B},\,a'\in\underline{v_A u_B}}
\bigl(a(u'_A,v'_B)+a'(v'_A,u'_B)\bigr)-\nu'_{AB},\,0\Bigr),\\[2pt]
\max\Bigl(\nu'_{AB}-\max_{a\in\overline{u_A v_B},\,a'\in\overline{v_A u_B}}
\bigl(a(u'_A,v'_B)+a'(v'_A,u'_B)\bigr),\,0\Bigr).
\end{align}
\end{subequations}
Summing these four terms yields the total violation for a given subnode. Let the sums for the two subnodes be $s_1$ and $s_2$, respectively.

\begin{proposition}
    If $s_1>0$ and $s_2>0$, then branching on $u_A$ renders the reference point infeasible in both subnodes.
\end{proposition}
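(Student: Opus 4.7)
The plan is to verify the claim by unpacking the definitions of the violation scores $s_1$ and $s_2$, so the argument is short and essentially bookkeeping. First I would note that by \eqref{eq.mcconsviolate}, each $s_i$ is a sum of four nonnegative terms, each wrapped in a $\max(\cdot,0)$. Each of these four terms measures the shortfall of the reference point $(u'_A,v'_A,u'_B,v'_B,\mu'_{AB},\nu'_{AB})$ with respect to one of the four aggregated McCormick inequalities in \eqref{eq.mccons}, evaluated with the envelopes $\underline{u_A u_B}$, $\overline{u_A u_B}$, $\underline{u_A v_B}$, $\overline{u_A v_B}$, $\underline{v_A u_B}$, $\overline{v_A u_B}$, $\underline{v_A v_B}$, $\overline{v_A v_B}$ that correspond to the tightened bounds on $u_A$ in subnode $i$.

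Next, since a sum of nonnegative reals is strictly positive only when at least one summand is strictly positive, the hypothesis $s_i>0$ forces at least one of the four quantities in \eqref{eq.mcconsviolate} to be strictly positive in subnode $i$. By the way these quantities are defined, a strictly positive value is precisely the amount by which the reference point violates the corresponding affine inequality from \eqref{eq.mccons} under the subnode bounds. Since that inequality is among the constraints defining the relaxation of subnode $i$, the reference point is infeasible there. Applying the same reasoning to both $i\in\{1,2\}$ yields the conclusion.

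The only aspect that requires any care, which I would flag but not regard as a real obstacle, is to be explicit that each $s_i$ in \eqref{eq.mcconsviolate} is formed using the McCormick envelopes derived from the subnode bounds rather than the parent bounds. Under the parent bounds the reference point satisfies every McCormick inequality, since it came from a feasible solution of the parent relaxation, so the violation can only emerge after the envelopes are recomputed against the new interval on $u_A$. The definitions in \eqref{eq.mcconsviolate} already do this, and once this is noted the equivalence between ``$s_i>0$'' and ``at least one updated McCormick inequality is violated at the reference point'' is immediate, which is all the proof needs.
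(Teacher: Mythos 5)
Your proof is correct and follows essentially the same route as the paper's: $s_i>0$ forces at least one of the four nonnegative violation terms in \eqref{eq.mcconsviolate} to be strictly positive, which means the reference point violates the corresponding updated McCormick inequality and is therefore infeasible in that subnode. Your explicit remark that the envelopes must be recomputed with the subnode bounds (since the reference point satisfies the parent's McCormick inequalities) is a useful clarification that the paper leaves implicit in the phrase ``under the subnode bounds.''
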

\begin{proof}
     By construction, $s_i>0$ implies that at least one McCormick inequality from \eqref{eq.mccons} is violated at the reference point under the subnode bounds; hence, the reference point is infeasible in that subnode. If both $s_1>0$ and $s_2>0$, infeasibility holds in both subnodes.
\end{proof}

We combine the two subnode scores into a single violation score to prioritize branching candidates. Empirically we use
\[
s \;=\; \tfrac{5}{6}\min(s_1,s_2) \;+\; \tfrac{1}{6}\max(s_1,s_2),
\]
 so the violating branching rule selects the variable with the largest $s$.

This violating rule applies generally to bilinear programs where McCormick relaxations are used. Other branching strategies exist for complex variables and PSD constraints (e.g., rules that force the rank of a PSD matrix to one) \cite{chen2017spatial}, but those typically estimate objective change and are not directly tailored to bilinear McCormick structures as used here.

\subsection{Heuristics}

The sBB algorithm also needs heuristics for finding solutions that provide upper bounds for the BSS problem; we use a simple alternating heuristic.

The alternating heuristic iteratively updates rank-1 density matrices $\{\X_k\}_{k \in [m]}$ that provide a solution $\y = \bmtens_{k \in [m]} \X_k$ for \eqref{eq.sep}. In each iteration, the heuristic fixes $m-1$ density matrices and maximizes the objective $\inner{\chi}{\bmtens_{k \in [m]}\X_k}$ with respect to the remaining free density matrix. Given $\{\X'_k\}_{k \in [m]}$ from the previous iteration and assuming the index $k'$ is free, the restricted BSS problem is equivalent to the following SDP problem:
\begin{equation}
     \max_{\forall k \in [m]/ \{k'\}, \X_k = \X'_k, \X_{k'}  \in \dmat^{d_{k'}}} \inner{\chi}{\bmtens_{k \in [m]}\X_k} =  \max_{\X_{k'}  \in \dmat^{d_{k'}}} \inner{\chi_{k'}}{\X_{k'}},
\end{equation}
where $\chi_{k'}$ is a tensor contraction of $\chi$ on the $k'$-th mode.
Note that the optimal solution $\X_{k'}$ must be a rank-1 matrix $\x  \mtens \x^\dagger$, where $\x$ is the unit eigenvector corresponding to the most negative eigenvalue of the matrix $\chi_{k'}$.

We initialize the alternating heuristic with solutions of the convex relaxation at the search-tree nodes. We choose the free index $k'$ cyclically and stop the alternating heuristic when it converges or when it reaches the iteration limit. This heuristic was first introduced in \cite{Shang2018convex} for separability certification and applied in \cite{liu2025unified} as well.

\section{Numerical experiments}
\label{sec.numerical}
In this section, we present the numerical experiments of the existing and proposed algorithms on the white-noise mixing
threshold problem. The data, detailed results, and source code are available on the online repository: \url{https://github.com/ZIB-IOL/TensorOpt4Entanglement}.

\subsection{Development environment.}
All experiments were conducted on a cluster equipped with Intel Xeon Gold 5122 CPUs, operating at 3.60 GHz with 96 GB of RAM. Each test was executed on a single thread with a memory limit of 10GB RAM. All algorithms are implemented in the Julia language and JuMP modelling language. The following solvers and software packages were used: Mosek (for solving LPs and SDPs), Manopt \cite{bergmann2022manopt} (for using quasi-Newton method), and Ket.jl \cite{araujo_2025_15837771} (for generating quantum states and computing lower bounds on white-noise mixing
threshold).

\subsection{Problem instances.}
 Our benchmark consists of the following instances:
\begin{itemize}
    \item Greenberger–Horne–Zeilinger (GHZ) states \cite{greenberger1990bell}: GHZ states are typical examples of maximally entangled multipartite  states. Its white-noise threshold is analytically known as $1 - 1 / (1 + 2^{m-1})$ for $m\ge 2$ \cite{Dur2000classification}.
    \item Dicke states \cite{Dicke1954coherence}: Dicke states describe the highly entangled state of an ensemble with fixed energy excitation.
    Their entanglement is maximally persistent and robust under particle losses \cite{Guhne2008multiparticle}.
        \item Cluster states  \cite{Briegel2001persistent}: Cluster states describe a type of entangled states in lattices with Ising-type interactions. Their entanglement is more robust to noise and local operations. For $m=2$ and $m=3$, the cluster states are equivalent to the Bell state and GHZ state (up to local transformations), respectively.
\end{itemize}
Note that all the instances are states of $m$-partite systems, where the dimension of each subsystem is $2 \times 2$.

\subsection{Algorithms implementations and details}
\label{sec.implem}

We implement the following algorithms:

\begin{itemize}
    \item SDP-based alternating optimization (Alt-SDP): our implementation follows \cite{Ohst2024certifying}. The method represents separable states as a sum of $r$ tensor components, each a Kronecker product of subsystem density matrices (see \Cref{prop.state}). In each iteration, one density matrix per component is left free and updated, so each update reduces to an SDP and yields an upper bound on the white-noise mixing threshold. Our implementation sets $r$ the same as the factorization size in the other algorithms (the same baseline).
    \item Primal-dual geometric reconstruction (PDGR) method: We also report the best available upper and lower bounds computed by PDGR \cite{liu2025unified}, which is backed by the advanced CG algorithms implemented in FrankWolfe.jl \cite{besanccon2022frankwolfe}. In the trial step, the method applies a line search with the CG-based entanglement detection to find entangled and interpolated states $\rho(\z; \phi,\Id)$ for $ 1 \ge \z \ge 0$ (defined in \eqref{eq.thre}). The corresponding  $\z$ establishes lower bounds on the white-noise mixing threshold. During each line-search iteration, the PDGR method also tries to reconstruct a separable state $\rho(\z^*; \phi,\Id)$ between $\rho(\z; \phi,\Id)$ and $\Id/\bar{d}$ using the separability certification of GR. The corresponding $\z^*$ are upper bounds on the white-noise mixing threshold.

    \item LADMM (\Cref{algo.alm}): the quasi-Newton solver from Manopt is used for solving lifted nonconvex subproblems. LADMM typically returns a heuristic solution of \eqref{eq.thre} and therefore a heuristic upper bound together with a feasibility residual $\| \y - A(\z) - \a\|_2$. Following \Cref{rmk.crossover}, we optionally run one CP iteration to refine the LADMM output and obtain a nearby feasible point with a certified upper bound.
    \item Cutting-plane (CP \Cref{algo.dual}): see \Cref{exp.ref3} for the LPs solved inside the CP routine. The CP algorithm uses the tailored sBB method as the LMO for the BSS problem and provides both upper and lower bounds on the white-noise mixing threshold.
    \item Iterative refinement (IR \Cref{algo.lift}): the IR alternates between LADMM and CP to produce tightened upper and (Lagrangian) lower bounds on the white-noise mixing threshold.
    \item DPS: we use  Ket.jl to obtain lower bounds provided by \eqref{eq:PR} with the DPS hierarchy (see \Cref{rmk.dps}).
    \item DDPS+: we construct the relaxation \eqref{eq:PR} by the proposed (first-level) DDPS outer approximation \eqref{eq.dmmr}  with  scalar  McCormick inequalities  \eqref{eq.mccons} and tensor McCormick inequalities \eqref{eq.psd}, which provides lower bounds.
\end{itemize}

Experimental settings and parameters are as follows.

\begin{itemize}
    \item Time limits and tolerances: time limits are set to 1/2/3 hours for $m=3/4/5$, respectively. Numerical tolerances are $10^{-6}$ unless stated otherwise.
    \item Factorization size and initialization: the factorization size $r$ is set to 256/512/652 for $m=3/4/5$, respectively. We generate $r$ random rank-1 tensors of the form $\x_{1}\mtens\x_{1}^\dagger\cdots\mtens\x_{m}\mtens\x_{m}^\dagger$ (properly normalized) to initialize the algorithms that use factorizations.
    \item CP and sBB LMO: within the IR algorithm, the CP subroutine iteration limit is set to the factorization size; the standalone CP routine has no iteration limit and terminates when upper and lower bounds coincide. For the sBB LMO we set the node limit to 120/100/1 for $m=3/4/5$, respectively. Before 90\% of the global time limit is reached, the sBB stops as soon as a violated cut is found; if no solution is found within the node limit, we fall back to the alternating heuristic with several random starts. After 90\% of the time limit, the CP routine focuses on improving dual lower bounds by running the sBB up to a tighter node limit of 70/50/15 for $m=3/4/5$ (gap-closing CP iterations), respectively.
    \item LADMM settings: in the IR algorithm, the LADMM iteration limits are 10/10/4 for $m=3/4/5$, respectively; for the standalone LADMM runs we allow unlimited iterations. The quasi-Newton solver is configured with a maximum of 150 iterations, objective tolerance $10^{-5}$, and step tolerance $10^{-4}$. The initial penalty parameter $\zeta$ is set to 1/5/10 for $m=3/4/5$, respectively. We adapt $\zeta$ using the scheme suggested in \cite{wang2025solving}:
    \begin{equation}
        \zeta = \begin{cases}
            2.5 \zeta  & \text{if } \norm{\y - A(\z) - \a}_2 > 0.8 \norm{\frac{\partial L}{\partial \x}(\z,\Psi(\x),\chi)}_2 \\
            0.4 \zeta  & \text{if } \norm{\y - A(\z) - \a}_2     \geq 0.8 \norm{\frac{\partial L}{\partial \x}(\z,\Psi(\x),\chi)}_2.
        \end{cases}
    \end{equation}
    The LADMM terminates if the feasibility residual and the norm of the gradient is below the tolerance, i.e., $\| \y - A(\z) - \a\|_2 < 10^{-6}$ and $ \norm{\frac{\partial L}{\partial \x}(\z,\Psi(\x),\chi)}_2 < 10^{-6}$.
    \item DPS parameters: we set the number of extended copies as large as allowed by  Ket.jl. Concretely, for $m=3/4/5$, we set the DPS configuration $(d_{A'},d_{B'},\ell)$ in \Cref{rmk.dps} to (4,2,7)/(4,4,3)/(8,4,2), respectively. These DPS-based relaxations provide lower bounds on the white-noise mixing threshold.
\end{itemize}

 \subsection{Results and analysis}

The experimental results are shown in \Cref{tab.m3,tab.m4,tab.m5} for states of $m=3,4,5$ subsystems, respectively. The columns show the upper bound and the lower bound that are considered numerically reliable, the heuristic upper bound, and the feasibility residual of the heuristic solution given by the algorithms. The final column shows the total runtime in seconds. A dash indicates that the metric corresponding to the column and row is not available. Boldface indicates the best upper and lower bounds for an instance. The PDGR method yields the best lower bounds for  many instances of $m=3,4$. Excluding this method, our DDPS+ relaxation consistently attains the strongest lower bounds.
 IR finds the best upper bounds for all instances. Moreover, for instances of $m=3$, the LADMM algorithm has found solutions with feasibility residual less than $10^{-5}$, but the crossover method in \Cref{rmk.crossover} does not lead to a non-trivial upper bound (a failed crossover).
\begin{table}[htb]
\centering
\begin{tabular}{l|l|ccccc}
\toprule
    \textbf{State} & \textbf{Algorithm} & $\ub_{\relx}$ & $\lb_{\relx}$ & $\ub_{\heur}$ & $\feas_{\heur}$ & \textbf{Time (s)} \\
\toprule
 \multirow{4}{*}{GHZ\_3}
& Alt-SDP & 0.80070 & - & - & - & 48 \\
& PDGR & 0.80002  & 0.79659 & - & - & - \\
 & LADMM & 1.00000 & - & 0.80000 & 0.00001 & 3606 \\
 & CP & 0.80007 & 0.79584 & - & - & 3602 \\
 & IR & \textbf{0.80000} & \textbf{0.80000} & 0.80011 & 0.00072 & 289 \\
 & DPS & - & 0.06250 & - & - & 3571 \\
  & DDPS+ & - & \textbf{0.80000} & - & - & 15 \\

\midrule
 \multirow{4}{*}{Dicke\_3\_1}
& Alt-SDP & 0.82263 & - & - & - & 29 \\
& PDGR &  0.82208 & \textbf{0.81856} & - & - & - \\
 & LADMM & 1.00000 & - & 0.82203 & 0.00009 & 3606 \\
 & CP & 0.82268 & 0.61398 & - & - & 3611 \\
 & IR & \textbf{0.82203} & 0.61661 & 0.82180 & 0.00143 & 3604 \\
 & DPS & - & 0.05893 & - & - & 4507 \\
 & DDPS+ & - & 0.79041 & - & - & 15 \\

\midrule
 \multirow{4}{*}{Dicke\_3\_2}
& Alt-SDP & 0.82244 & - & - & - & 37 \\
 & LADMM & 1.00000 & - & 0.82202 & 0.00007 & 3606 \\
 & CP & 0.82294 & 0.61283 & - & - & 3603 \\
 & IR & \textbf{0.82203} & 0.61624 & 0.82209 & 0.00172 & 3610 \\
 & DPS & - & 0.05893 & - & - & 4512 \\
  & DDPS+ & - & \textbf{0.79041} & - & - & 15 \\
\bottomrule

\end{tabular}
\caption{Experimental results for $m=3$.}
\label{tab.m3}
\end{table}

\begin{table}[htb]
\centering
\begin{tabular}{l|l|ccccc}
        \textbf{State} & \textbf{Algorithm} & $\ub_{\relx}$ & $\lb_{\relx}$ & $\ub_{\heur}$ & $\feas_{\heur}$ & \textbf{Time (s)} \\
\toprule
 \multirow{4}{*}{Dicke\_4\_2}
& Alt-SDP & \textbf{0.91430} & - & - & - & 266 \\
& PDGR & 0.91439  & \textbf{0.90211} & - & - & 266 \\
 & LADMM & 1.00000 & - & 0.91433 & 0.00008 & 7206 \\
 & CP & 0.91442 & 0.56803 & - & - & 7205 \\
 & IR & \textbf{0.91430} & 0.57071 & 0.91492 & 0.00203 & 7223 \\
 & DPS & - & 0.02083 & - & - & 3084 \\
& DDPS+ & - & 0.74985 & - & - & 15 \\

\midrule
 \multirow{4}{*}{Cluster\_4}
& Alt-SDP & 0.89635 & - & - & - & 326 \\
 & LADMM & 0.88889 & - & 0.88889 & 0.00001 & 7206 \\
 & CP & 0.90164 & 0.60550 & - & - & 7220 \\
 & IR & \textbf{0.88890} & 0.62595 & 0.92246 & 0.28444 & 7220 \\
 & DPS & - & 0.03125 & - & - & 3433 \\
  & DDPS+ & - & \textbf{0.72727} & - & - & 15 \\

\midrule
 \multirow{4}{*}{GHZ\_4}
& Alt-SDP & 0.88933 & - & - & - & 1049 \\
& PDGR & 0.88901 &  \textbf{0.87751} & - & - & - \\
 & LADMM & 1.00000 & - & 0.88889 & 0.00005 & 7206 \\
 & CP & 0.89138 & 0.59891 & - & - & 7205 \\
 & IR & \textbf{0.88890} & 0.57630 & 0.94108 & 0.96656 & 7222 \\
 & DPS & - & 0.03125 & - & - & 3411 \\
 & DDPS+ & - & 0.72727 & - & - & 15 \\

\midrule
 \multirow{4}{*}{Dicke\_4\_1}
& Alt-SDP & 0.90889 & - & - & - & 533 \\
& PDGR &  0.90757 & \textbf{0.89450} & - & - & - \\
 & LADMM & 1.00000 & - & 0.91158 & 0.05048 & 7232 \\
 & CP & 0.90779 & 0.64677 & - & - & 7234 \\
 & IR & \textbf{0.90748} & 0.65802 & 0.69942 & 0.59578 & 7240 \\
 & DPS & - & 0.03125 & - & - & 3392 \\
 & DDPS+ & - &  0.72727 & - & - & 15 \\
\bottomrule

\end{tabular}
\caption{Experimental results for $m=4$.}
\label{tab.m4}
\end{table}

\begin{table}[htb]
\centering
\begin{tabular}{l|l|ccccc}
\toprule
    \textbf{State} & \textbf{Algorithm} & $\ub_{\relx}$ & $\lb_{\relx}$ & $\ub_{\heur}$ & $\feas_{\heur}$ & \textbf{Time (s)} \\
\toprule
 \multirow{4}{*}{Cluster\_5}
& Alt-SDP & 0.94712 & - & - & - & 10852 \\
 & LADMM & 1.00000 & - & 0.96488 & 0.04053 & 10935 \\
 & CP & 0.95594 & 0.58307 & - & - & 10948 \\
 & IR & \textbf{0.94334} & 0.41951 & 0.94054 & 0.00303 & 10820 \\
 & DPS & - & 0.01562 & - & - & 160 \\
 & DDPS+ & - & \textbf{0.84211} & - & - & 18 \\

\midrule
 \multirow{4}{*}{Dicke\_5\_1}
& Alt-SDP & 0.95520 & - & - & - & 7623 \\
 & LADMM & 1.00000 & - & 0.97327 & 0.00957 & 11887 \\
 & CP & 0.95685 & 0.60589 & - & - & 10811 \\
 & IR & \textbf{0.95403} & 0.59546 & 0.94929 & 0.00454 & 10920 \\
 & DPS & - & 0.01531 & - & - & 160 \\
 & DDPS+ & - & \textbf{0.83937} & - & - & 18 \\

\midrule
 \multirow{4}{*}{Dicke\_5\_2}
& Alt-SDP & 0.95962 & - & - & - & 7585 \\
 & LADMM & 1.00000 & - & 0.97267 & 0.02212 & 11646 \\
 & CP & 0.96430 & 0.56383 & - & - & 10979 \\
 & IR & \textbf{0.95894} & 0.51509 & 0.95099 & 0.00764 & 10973 \\
 & DPS & - & 0.01326 & - & - & 157 \\
 & DDPS+ & - & \textbf{0.86014} & - & - & 19 \\

\midrule
 \multirow{4}{*}{GHZ\_5}
& Alt-SDP & 0.94327 & - & - & - & 10824 \\
 & LADMM & 1.00000 & - & 0.96762 & 0.28485 & 10839 \\
 & CP & 0.95829 & 0.55766 & - & - & 10895 \\
 & IR & \textbf{0.94163} & 0.37147 & 0.92536 & 0.02518 & 10855 \\
 & DPS & - & 0.01562 & - & - & 155 \\
  & DDPS+ & - &\textbf{0.84211} & - & - & 18 \\
\bottomrule

\end{tabular}
\caption{Experimental results for $m=5$.}
\label{tab.m5}
\end{table}

\subsubsection{Detailed analysis}
The experimental tables show consistent patterns across the three problem sizes.

The upper bound results given by the PDGR method are close to the best results given by the IR method for $m=3,4$. However, the  lower bounds by the PDGR method are better, and the gaps of the PDGR method are also smaller. The simplicial partition can cover the separable states with an approximation error less than 1\% with a payoff of an exponential number (in $m$) of simplices, so the associated LMO can return almost optimal solutions to low-dimensional BSS problems. However, due to the extremely large size of the simplicial partition, the PDGR method can not scale up for large instances and fails to provide the results for $m=5$.

The bipartite DPS lower bounds are cheap to obtain but quite weak compared to those obtained by the DDPS+. In addition, the Lagrangian bound given by CP is weaker than the relaxation bound given by DDPS+.

For $m=3$, the Alt-SDP heuristic quickly produces reasonable upper bounds (on the other hand, it tends to converge rapidly but may stagnate at local minima); LADMM produces good heuristic objectives but sometimes violates the constraint with feasibility residuals below $10^{-5}$. Nevertheless, the crossover procedure described in \Cref{rmk.crossover} does not always yield a certified feasible solution  (notably  $\ub_{\relx}=1.0$ returned after crossover). In practice, this failure is partly attributable to the LP solver's tight feasibility tolerance (Mosek uses $10^{-8}$), so a single CP iteration can be insufficient to convert an almost-feasible solution into a provably feasible one.  While the IR algorithm refines LADMM outputs.

For $m=4$, runtimes increase substantially: Alt-SDP still gives fast decent upper bounds, LADMM continues to improve objectives but typically requires CP to obtain feasibility and numerically reliable bounds; the CP stage becomes more time-consuming as SDP relaxations grow.  This justifies the need for a full CP algorithm after the IR algorithm is run.

\begin{table}[htb]
\centering
\begin{tabular}{l|l|ccccc}
\toprule
    \textbf{State} & \textbf{Algorithm} & $\ub_{\relx}$ & $\lb_{\relx}$ & $\ub_{\heur}$ & $\feas_{\heur}$ & \textbf{Time (s)} \\
\toprule

 \multirow{4}{*}{Cluster\_5}
& LADMM\_400 & 1.00000 & - & 0.95027 & 0.04640 & 10854 \\
 & LADMM\_500 & 1.00000 & - & 0.94483 & 0.01052 & 10816 \\
 & LADMM\_600 & 1.00000 & - & 0.94818 & 0.01407 & 11637 \\
 & LADMM\_700 & 1.00000 & - & 0.96047 & 0.04056 & 11025 \\
 & LADMM\_800 & 1.00000 & - & 0.93631 & 0.01491 & 10832 \\
 & LADMM\_900 & 1.00000 & - & 0.96534 & 0.24192 & 10918 \\

\midrule
 \multirow{4}{*}{Dicke\_5\_1}
& LADMM\_400 & 1.00000 & - & 1.00000 & 0.98425 & 10834 \\
 & LADMM\_500 & 1.00000 & - & 0.95322 & 0.00674 & 10873 \\
 & LADMM\_600 & 1.00000 & - & 0.98213 & 0.02546 & 11546 \\
 & LADMM\_700 & 1.00000 & - & 0.95160 & 0.01705 & 11155 \\
 & LADMM\_800 & 1.00000 & - & 0.91595 & 0.03606 & 10863 \\
 & LADMM\_900 & 1.00000 & - & 0.87148 & 0.08047 & 11453 \\

\midrule
 \multirow{4}{*}{Dicke\_5\_2}
& LADMM\_400 & 1.00000 & - & 0.95992 & 0.01926 & 10828 \\
 & LADMM\_500 & 1.00000 & - & 0.96941 & 0.04906 & 10845 \\
 & LADMM\_600 & 1.00000 & - & 0.96265 & 0.05245 & 11191 \\
 & LADMM\_700 & 1.00000 & - & 0.96002 & 0.01333 & 11273 \\
 & LADMM\_800 & 1.00000 & - & 0.95767 & 0.00023 & 10861 \\
 & LADMM\_900 & 1.00000 & - & 0.95767 & 0.00055 & 11160 \\

\midrule
 \multirow{4}{*}{GHZ\_5}
& LADMM\_400 & 1.00000 & - & 0.94117 & 0.00003 & 10812 \\
 & LADMM\_500 & 1.00000 & - & 0.94116 & 0.00003 & 10812 \\
 & LADMM\_600 & 1.00000 & - & 0.94119 & 0.00001 & 10813 \\
 & LADMM\_700 & 1.00000 & - & 0.94117 & 0.00109 & 10940 \\
 & LADMM\_800 & 1.00000 & - & 0.94117 & 0.00012 & 10840 \\
 & LADMM\_900 & 1.00000 & - & 0.94115 & 0.00004 & 10813 \\
\bottomrule

\end{tabular}
\caption{Results of low-rank approximations for LADMM for $m=5$.}
\label{tab.m5low}
\end{table}

\subsubsection{Analysis of low-rank approximations}

\Cref{prop.compact} gives a trivial bound on the compact factorization size (one may take \(r\le 2D+1\) when \(\dim(\SOST_{\F}(\vd))=D\)). For $m=3/4$, we already set the factorization size compact. For $m=5$, we use a low-rank approximation for these large-scale instances; \Cref{tab.m5low} summarizes the results of the LADMM with factorization size $r \in \{400, 500, 600, 700, 800, 900\}$.
Empirically, a larger factorization size $r$ tends to produce better heuristic objectives from LADMM, but the improvement is not strictly monotone. Different parameterizations can lead the nonconvex solver to different local minima, so objective and feasibility behaviour may vary with $r$. Feasibility residuals also vary: some LADMM outputs are nearly feasible while others require substantial post‑processing. Some states (e.g., GHZ\_5) are robust to $r$, while others (e.g., Dicke\_5\_1, Cluster\_5) are sensitive. These observations suggest  that low-rank assumptions in general do not hold (except for GHZ states).
Regardless of the chosen \(r\), the IR pipeline remains necessary: high-quality warm starts from LADMM should be converted and certified via CP (using the sBB LMO) to obtain feasible, provably valid upper and lower bounds. This balanced, resource-aware method yields the best trade-off between solution quality and computational cost.

\begin{table}[htb]
\centering
\begin{tabular}{l|lccc}
\toprule
    \textbf{State} & \textbf{Algorithm} & $\ub_{\relx}$ & $\lb_{\relx}$ & $\underbar{b}$   \\
\midrule
\multirow{2}{*}{Cluster\_5}
 & CP & 0.95600 & 0.57313 & -0.38429 \\
 & IR & 0.94336 & 0.41713 & -0.53011 \\
\midrule
\multirow{2}{*}{Dicke\_5\_1}
 & CP & 0.95690 & 0.59113 & -0.35414 \\
 & IR & 0.95405 & 0.59113 & -0.36641 \\
\midrule
\multirow{2}{*}{Dicke\_5\_2}
 & CP & 0.96435 & 0.56113 & -0.40058 \\
 & IR & 0.95895 & 0.51509 & -0.44928 \\
\midrule
\multirow{2}{*}{GHZ\_5}
 & CP & 0.95835 & 0.55721 & -0.40034 \\
 & IR & 0.94164 & 0.43013 & -0.58542 \\
\bottomrule

\end{tabular}
\caption{Average results on gap-closing CP iterations for $m=5$.}
\label{tab.m5CP}
\end{table}

\subsubsection{Analysis of hard instances}

For $m=5$, the scalability limitations become apparent: we must restrict the factorization size and rely on low-rank approximations. The IR algorithm remains the most effective at decreasing the upper bound (particularly relative to Alt-SDP), but it is less effective at improving the (Lagrangian) lower bound than the pure CP procedure. Recall that the Lagrangian lower bound is computed as $\lb_{\relx}=\ub_{\relx}+\underbar{b}$, where $\underbar{b}$ is the sBB LMO's lower bound for the BSS separation subproblem. Averaged values of these quantities (the gap-closing CP iteration statistics: $\ub_{\relx}$, $\lb_{\relx}$, and $\underbar{b}$) are reported in \Cref{tab.m5CP}. Although IR yields a smaller $\ub_{\relx}$, its associated $\underbar{b}$ is substantially more negative than that obtained by the CP-only approach, leading to a worse (smaller) $\lb_{\relx}$.

This lower‑bound deterioration  highlights a shortcoming of the IR for large-scale problems: we reuse  LADMM's tensor factorization of $\x$ and the corresponding extreme components $R(\x)$ to warm-start the CP. However,  compared to the standalone CP, the CP subroutine in the IR is not run for a sufficient number of iterations, and the CP is still far from convergence. In our CP implementation, termination occurs when the LMO returns a nonnegative optimal value (no violated cuts). Otherwise, a violated cut (negative value) is added.  When the CP subroutine is not converging, the LMO's optimal solution can be unstable and the optimal value $b$ as well as its lower bound $\underbar{b}$ can be very negative. These observations suggest that, for large-scale problems, CP subroutines need many iterations to converge.

\subsubsection{Summary}

The Alt-SDP heuristic is easy to implement and computationally fast, but it provides no lower bounds and may miss small objective improvements that more expensive methods can capture. LADMM (lifted ADMM) often attains slightly better objectives than simple alternating updates and typically yields near-feasible solutions with small residuals (around $10^{-4}$). However, achieving high-accuracy convergence is extremely difficult (taking hours for larger instances). In addition, LADMM is sensitive to parameter tuning, can violate coupling constraints, and usually requires a CP crossover or refinement step to turn heuristic solutions into certified feasible solutions.

The PDGR method is more problem-specific, since it is designed for the line-search formulation. It can provide the best lower bounds and competitive upper bounds for small instances ($m < 5$). For larger-scale cases ($m = 5$), the CP/IR pipeline with the sBB LMO offers a reliable approach to numerically certified upper bounds and feasible solutions, while DDPS+ improves the SDP-based lower bounds produced by the bipartite DPS hierarchy. The principal computational bottleneck for these methods lies in the LMO: as $m$ increases, solution quality degrades and running times grow substantially. In addition, the general-purpose algorithms (CP/LADMM/IR) take more running time than the problem-specific algorithms (Alt-SDP/PDGR).

\section{Conclusion}

In this work, we study convex optimization over the cone of PSD tensors, a natural generalization of SDP with broad examples in quantum information theory. We leverage strategies inspired by modern SDP algorithms and incorporate new nonconvex structures arising in PSD tensors. Our work bridges the gaps between general-purpose algorithms in computational optimization and problem-specific algorithms in computational physics on a standard benchmark problem.

We develop the LADMM algorithm to tackle the lifted nonconvex formulation of SDTO. In addition, we design a CP algorithm that iteratively refines solutions of the LADMM algorithm, together with a tailored sBB method serving as the LMO. We then integrate these two approaches into an IR framework that alternates between primal and dual updates to generate upper and lower bounds on the optimal value.

Our numerical experiments on benchmark white-noise threshold instances show that the IR method consistently yields the best upper bounds and scales to larger problem sizes. However, SDTO problems remain substantially more challenging than standard SDPs, as the dimension grows exponentially with the number of subsystems (modes). The tailored PDGR method, for example, does not scale to these larger instances. This demonstrates the need for further research on more efficient algorithms and implementations for large-scale SDTO problems, particularly general-purpose approaches that can extend beyond the white-noise threshold setting.

Since many convex optimization problems in quantum information involving density matrices can be reformulated in terms of separable states, a broad class of such problems can be expressed as SDTOs. Thus, our framework is not restricted to the white-noise threshold problem. In particular, it can be extended to incorporate operator convex/concave functions acting on PSD tensors, such as quantum (relative) entropy \cite{hiai2014introduction}.

\subsection*{Acknowledgments}
This research was partially supported by Research Campus MODAL,
funded by the Federal Ministry of Research, Technology and Space (BMFTR)
(fund numbers 05M14ZAM, 05M20ZBM) and the DFG
Cluster of Excellence MATH+ (EXC-2046/1, Project No. 390685689) funded
by the Deutsche Forschungsgemeinschaft (DFG).

\ifthenelse {\boolean{mpc}}
{
\bibliographystyle{spmpsci}
}
{
\bibliographystyle{plainnat}
}
\bibliography{reference}

@Article{gharibian2008strong,
  author    = {Gharibian, S.},
  journal   = {Quantum Information and Computation},
  title     = {Strong NP-hardness of the quantum separability problem},
  year      = {2010},
  issn      = {1533-7146},
  month     = mar,
  number    = {3},
  pages     = {343--360},
  volume    = {10},
  doi       = {10.26421/qic10.3-4-11},
  publisher = {Rinton Press},
}

@Article{fang2021sum,
  author    = {Fang, Kun and Fawzi, Hamza},
  journal   = {Mathematical Programming},
  title     = {The sum-of-squares hierarchy on the sphere and applications in quantum information theory},
  year      = {2020},
  issn      = {1436-4646},
  month     = jul,
  number    = {1–2},
  pages     = {331--360},
  volume    = {190},
  doi       = {10.1007/s10107-020-01537-7},
  publisher = {Springer Science and Business Media LLC},
}

@Article{doherty2004complete,
  author    = {Doherty, Andrew C. and Parrilo, Pablo A. and Spedalieri, Federico M.},
  journal   = {Physical Review A},
  title     = {Complete family of separability criteria},
  year      = {2004},
  issn      = {1094-1622},
  month     = feb,
  number    = {2},
  pages     = {022308},
  volume    = {69},
  doi       = {10.1103/physreva.69.022308},
  publisher = {American Physical Society (APS)},
}

@Article{al1983jointly,
  author    = {Al-Khayyal, Faiz A. and Falk, James E.},
  journal   = {Mathematics of Operations Research},
  title     = {Jointly Constrained Biconvex Programming},
  year      = {1983},
  issn      = {1526-5471},
  month     = may,
  number    = {2},
  pages     = {273--286},
  volume    = {8},
  doi       = {10.1287/moor.8.2.273},
  publisher = {Institute for Operations Research and the Management Sciences (INFORMS)},
}

@Article{anstreicher2017kronecker,
  author    = {Anstreicher, Kurt M.},
  journal   = {SIAM Journal on Optimization},
  title     = {Kronecker Product Constraints with an Application to the Two-Trust-Region Subproblem},
  year      = {2017},
  issn      = {1095-7189},
  month     = jan,
  number    = {1},
  pages     = {368--378},
  volume    = {27},
  doi       = {10.1137/16m1078859},
  publisher = {Society for Industrial & Applied Mathematics (SIAM)},
}

@Article{sherali1992global,
  author    = {Hanif D. Sherali and Cihan H. Tuncbilek},
  journal   = {Journal of Global Optimization},
  title     = {A global optimization algorithm for polynomial programming problems using a Reformulation-Linearization Technique},
  year      = {1992},
  issn      = {0925-5001},
  number    = {1},
  pages     = {101-112},
  volume    = {2},
  doi       = {10.1007/bf00121304},
  publisher = {Springer Science and Business Media LLC},
}

@article{Vidal1999robustness,
  author =        {Vidal, Guifr\'e and Tarrach, Rolf},
  journal =       {Phys. Rev. A},
  month =         {Jan},
  pages =         {141--155},
  publisher =     {American Physical Society},
  title =         {Robustness of entanglement},
  volume =        {59},
  year =          {1999},
  doi =           {10.1103/PhysRevA.59.141},
}

@article{Horodecki1997separability,
  author =        {Pawel Horodecki},
  journal =       {Phys. Lett. A},
  number =        {5},
  pages =         {333-339},
  title =         {Separability criterion and inseparable mixed states
                   with positive partial transposition},
  volume =        {232},
  year =          {1997},
  doi =           {https://doi.org/10.1016/S0375-9601(97)00416-7},
  issn =          {0375-9601},
}

@article{Navascues2009complete,
  author =        {Navascu\'es, Miguel and Owari, Masaki and
                   Plenio, Martin B.},
  journal =       {Phys. Rev. Lett.},
  month =         {Oct},
  pages =         {160404},
  publisher =     {American Physical Society},
  title =         {Complete criterion for separability detection},
  volume =        {103},
  year =          {2009},
  doi =           {10.1103/PhysRevLett.103.160404},
}

@Article{fawzi2021set,
  author    = {Fawzi, Hamza},
  journal   = {Communications in Mathematical Physics},
  title     = {The Set of Separable States has no Finite Semidefinite Representation Except in Dimension 3$\times$2},
  year      = {2021},
  issn      = {1432-0916},
  month     = jul,
  number    = {3},
  pages     = {1319--1335},
  volume    = {386},
  doi       = {10.1007/s00220-021-04163-2},
  publisher = {Springer Science and Business Media LLC},
}

@article{araujo_2025_15837771,
  author =        {Araújo, Mateus and Brown, Peter and
                   Designolle, Sébastien and de Gois, Carlos and
                   Liu, Ye-Chao and Porto, Lucas and
                   Quintino, Marco Túlio},
  month =         jul,
  publisher =     {Zenodo},
  title =         {Ket.jl: a Julia toolbox for quantum information,
                   nonlocality, and entanglement},
  year =          {2025},
  doi =           {10.5281/zenodo.15837771},
  url =           {https://doi.org/10.5281/zenodo.15837771},
}

@article{Ohst2024certifying,
  author =        {Ohst, Ties-A. and Yu, Xiao-Dong and G\"uhne, Otfried and
                   Nguyen, H. Chau},
  journal =       {SciPost Phys.},
  pages =         {063},
  publisher =     {SciPost},
  title =         {Certifying quantum separability with adaptive
                   polytopes},
  volume =        {16},
  year =          {2024},
  doi =           {10.21468/SciPostPhys.16.3.063},
}

@article{Frank1956algorithm,
  author =        {Frank, Marguerite and Wolfe, Philip},
  journal =       {Nav. Res. Logist. Q.},
  number =        {1-2},
  pages =         {95--110},
  title =         {An algorithm for quadratic programming},
  volume =        {3},
  year =          {1956},
  doi =           {https://doi.org/10.1002/nav.3800030109},
}

@article{Shang2018convex,
  author =        {Shang, Jiangwei and G\"uhne, Otfried},
  journal =       {Phys. Rev. Lett.},
  month =         {Feb},
  pages =         {050506},
  publisher =     {American Physical Society},
  title =         {Convex optimization over classes of multiparticle
                   entanglement},
  volume =        {120},
  year =          {2018},
  doi =           {10.1103/PhysRevLett.120.050506},
}

@article{Hu_2023_algorithm,
  author =        {Hu, Yixuan and Liu, Ye-Chao and Shang, Jiangwei},
  journal =       {Chinese Phys. B},
  month =         {Aug},
  number =        {8},
  pages =         {080307},
  publisher =     {IOP Publishing},
  title =         {Algorithm for evaluating distance-based entanglement
                   measures},
  volume =        {32},
  year =          {2023},
  doi =           {10.1088/1674-1056/acd5c5},
  issn =          {1674-1056},
}

@article{liu2025unified,
  title={A unified toolbox for multipartite entanglement certification},
  author={Liu, Ye-Chao and Halbey, Jannis and Pokutta, Sebastian and Designolle, S{\'e}bastien},
  journal={arXiv preprint arXiv:2507.17435},
  year={2025}
}

@article{pokutta2024frank,
  title={The Frank-Wolfe algorithm: a short introduction},
  author={Pokutta, Sebastian},
  journal={Jahresbericht der Deutschen Mathematiker-Vereinigung},
  volume={126},
  number={1},
  pages={3--35},
  year={2024},
  publisher={Springer}
}

@article{besanccon2022frankwolfe,
  title={FrankWolfe. jl: A high-performance and flexible toolbox for Frank--Wolfe algorithms and conditional gradients},
  author={Besan{\c{c}}on, Mathieu and Carderera, Alejandro and Pokutta, Sebastian},
  journal={INFORMS Journal on Computing},
  volume={34},
  number={5},
  pages={2611--2620},
  year={2022},
  publisher={INFORMS}
}

@book{fwbook,
author = {Braun, Gábor and Carderera, Alejandro and Combettes, Cyrille W. and Hassani, Hamed and Karbasi, Amin and Mokhtari, Aryan and Pokutta, Sebastian},
title = {Conditional Gradient Methods},
publisher = {Society for Industrial and Applied Mathematics},
year = {2025},
doi = {10.1137/1.9781611978568},
address = {Philadelphia, PA}
}

@Article{qi2013symmetric,
  author    = {Qi, Liqun},
  journal   = {Linear Algebra and its Applications},
  title     = {Symmetric nonnegative tensors and copositive tensors},
  year      = {2013},
  issn      = {0024-3795},
  month     = jul,
  number    = {1},
  pages     = {228--238},
  volume    = {439},
  doi       = {10.1016/j.laa.2013.03.015},
  publisher = {Elsevier BV},
}

@Article{hu2013determinants,
  author    = {Hu, Shenglong and Huang, Zheng-Hai and Ling, Chen and Qi, Liqun},
  journal   = {Journal of Symbolic Computation},
  title     = {On determinants and eigenvalue theory of tensors},
  year      = {2013},
  issn      = {0747-7171},
  month     = mar,
  pages     = {508--531},
  volume    = {50},
  doi       = {10.1016/j.jsc.2012.10.001},
  publisher = {Elsevier BV},
}

@Article{hitchcock1927expression,
  author    = {Hitchcock, Frank L.},
  journal   = {Journal of Mathematics and Physics},
  title     = {The Expression of a Tensor or a Polyadic as a Sum of Products},
  year      = {1927},
  issn      = {0097-1421},
  month     = apr,
  number    = {1–4},
  pages     = {164--189},
  volume    = {6},
  doi       = {10.1002/sapm192761164},
  publisher = {Wiley},
}

@Article{hitchcock1928multiple,
  author    = {Hitchcock, Frank L},
  journal   = {Journal of Mathematics and Physics},
  title     = {Multiple Invariants and Generalized Rank of a P‐Way Matrix or Tensor},
  year      = {1928},
  issn      = {0097-1421},
  month     = apr,
  number    = {1–4},
  pages     = {39--79},
  volume    = {7},
  doi       = {10.1002/sapm19287139},
  publisher = {Wiley},
}

@Article{kolda2009tensor,
  author    = {Kolda, Tamara G. and Bader, Brett W.},
  journal   = {SIAM Review},
  title     = {Tensor Decompositions and Applications},
  year      = {2009},
  issn      = {1095-7200},
  month     = aug,
  number    = {3},
  pages     = {455--500},
  volume    = {51},
  doi       = {10.1137/07070111x},
  publisher = {Society for Industrial & Applied Mathematics (SIAM)},
}

@Article{hillar2013,
  author    = {Hillar, Christopher J. and Lim, Lek-Heng},
  journal   = {Journal of the ACM},
  title     = {Most Tensor Problems Are NP-Hard},
  year      = {2013},
  issn      = {1557-735X},
  month     = nov,
  number    = {6},
  pages     = {1--39},
  volume    = {60},
  doi       = {10.1145/2512329},
  publisher = {Association for Computing Machinery (ACM)},
}

@Article{bandeira2018,
  author    = {Bandeira, Afonso S. and Perry, Amelia and Wein, Alexander S.},
  journal   = {Portugaliae Mathematica},
  title     = {Notes on computational-to-statistical gaps: predictions using statistical physics},
  year      = {2018},
  issn      = {1662-2758},
  month     = dec,
  number    = {2},
  pages     = {159--186},
  volume    = {75},
  doi       = {10.4171/pm/2014},
  publisher = {European Mathematical Society - EMS - Publishing House GmbH},
}

@Article{de2004computation,
  author    = {De Lathauwer, Lieven and De Moor, Bart and Vandewalle, Joos},
  journal   = {SIAM Journal on Matrix Analysis and Applications},
  title     = {Computation of the Canonical Decomposition by Means of a Simultaneous Generalized Schur Decomposition},
  year      = {2004},
  issn      = {1095-7162},
  month     = jan,
  number    = {2},
  pages     = {295--327},
  volume    = {26},
  doi       = {10.1137/s089547980139786x},
  publisher = {Society for Industrial & Applied Mathematics (SIAM)},
}

@Article{leurgans1993decomposition,
  author    = {Leurgans, S. E. and Ross, R. T. and Abel, R. B.},
  journal   = {SIAM Journal on Matrix Analysis and Applications},
  title     = {A Decomposition for Three-Way Arrays},
  year      = {1993},
  issn      = {1095-7162},
  month     = oct,
  number    = {4},
  pages     = {1064--1083},
  volume    = {14},
  doi       = {10.1137/0614071},
  publisher = {Society for Industrial & Applied Mathematics (SIAM)},
}

@Article{de2000best,
  author    = {Lieven De Lathauwer and Bart De Moor and Joos Vandewalle},
  journal   = {SIAM Journal on Matrix Analysis and Applications},
  title     = {On the Best Rank-1 and Rank-(<i>R</i><sub>1</sub> ,<i>R</i><sub>2</sub> ,. . .,<i>R<sub>N</sub></i>) Approximation of Higher-Order Tensors},
  year      = {2000},
  issn      = {0895-4798},
  number    = {4},
  pages     = {1324-1342},
  volume    = {21},
  doi       = {10.1137/s0895479898346995},
  publisher = {Society for Industrial & Applied Mathematics (SIAM)},
}

@Article{sorber2013optimization,
  author    = {Sorber, Laurent and Van Barel, Marc and De Lathauwer, Lieven},
  journal   = {SIAM Journal on Optimization},
  title     = {Optimization-Based Algorithms for Tensor Decompositions: Canonical Polyadic Decomposition, Decomposition in Rank-$(L_r,L_r,1)$ Terms, and a New Generalization},
  year      = {2013},
  issn      = {1095-7189},
  month     = jan,
  number    = {2},
  pages     = {695--720},
  volume    = {23},
  doi       = {10.1137/120868323},
  publisher = {Society for Industrial & Applied Mathematics (SIAM)},
}

@inproceedings{ge2015escaping,
  author =        {Ge, Rong and Huang, Furong and Jin, Chi and
                   Yuan, Yang},
  booktitle =     {Conference on learning theory},
  organization =  {PMLR},
  pages =         {797--842},
  title =         {Escaping from saddle points—online stochastic
                   gradient for tensor decomposition},
  year =          {2015},
}

@InBook{absil2009optimization,
  author    = {Absil, P.-A. and Mahony, R. and Sepulchre, R.},
  pages     = {125--144},
  publisher = {Springer Berlin Heidelberg},
  title     = {Optimization On Manifolds: Methods and Applications},
  year      = {2009},
  isbn      = {9783642125980},
  booktitle = {Recent Advances in Optimization and its Applications in Engineering},
  doi       = {10.1007/978-3-642-12598-0_12},
  journal   = {Recent Advances in Optimization and its Applications in Engineering},
}

@Book{boumal2023introduction,
  author    = {Boumal, Nicolas},
  publisher = {Cambridge University Press},
  title     = {An Introduction to Optimization on Smooth Manifolds},
  year      = {2023},
  isbn      = {9781009166157},
  month     = mar,
  doi       = {10.1017/9781009166164},
}

@Article{breiding2018riemannian,
  author    = {Breiding, Paul and Vannieuwenhoven, Nick},
  journal   = {SIAM Journal on Optimization},
  title     = {A Riemannian Trust Region Method for the Canonical Tensor Rank Approximation Problem},
  year      = {2018},
  issn      = {1095-7189},
  month     = jan,
  number    = {3},
  pages     = {2435--2465},
  volume    = {28},
  doi       = {10.1137/17m114618x},
  publisher = {Society for Industrial & Applied Mathematics (SIAM)},
}

@Article{dong2022new,
  author    = {Dong, Shuyu and Gao, Bin and Guan, Yu and Glineur, François},
  journal   = {SIAM Journal on Matrix Analysis and Applications},
  title     = {New Riemannian Preconditioned Algorithms for Tensor Completion via Polyadic Decomposition},
  year      = {2022},
  issn      = {1095-7162},
  month     = may,
  number    = {2},
  pages     = {840--866},
  volume    = {43},
  doi       = {10.1137/21m1394734},
  publisher = {Society for Industrial & Applied Mathematics (SIAM)},
}

@Article{levin2025effect,
  author    = {McRae, Andrew D and Abdalla, Pedro and Bandeira, Afonso S and Boumal, Nicolas},
  journal   = {Information and Inference: A Journal of the IMA},
  title     = {Nonconvex landscapes for Z2 synchronization and graph clustering are benign near exact recovery thresholds},
  year      = {2025},
  issn      = {2049-8772},
  month     = mar,
  number    = {2},
  pages     = {63--111},
  volume    = {14},
  doi       = {10.1093/imaiai/iaaf012},
  publisher = {Oxford University Press (OUP)},
}

@Article{burer2003nonlinear,
  author    = {Burer, Samuel and Monteiro, Renato D.C.},
  journal   = {Mathematical Programming},
  title     = {A nonlinear programming algorithm for solving semidefinite programs via low-rank factorization},
  year      = {2003},
  issn      = {1436-4646},
  month     = feb,
  number    = {2},
  pages     = {329--357},
  volume    = {95},
  doi       = {10.1007/s10107-002-0352-8},
  publisher = {Springer Science and Business Media LLC},
}

@Article{burer2005local,
  author    = {Burer, Samuel and Monteiro, Renato D.C.},
  journal   = {Mathematical Programming},
  title     = {Local Minima and Convergence in Low-Rank Semidefinite Programming},
  year      = {2004},
  issn      = {1436-4646},
  month     = dec,
  number    = {3},
  pages     = {427--444},
  volume    = {103},
  doi       = {10.1007/s10107-004-0564-1},
  publisher = {Springer Science and Business Media LLC},
}

@Article{journee2010low,
  author    = {Journée, M. and Bach, F. and Absil, P.-A. and Sepulchre, R.},
  journal   = {SIAM Journal on Optimization},
  title     = {Low-Rank Optimization on the Cone of Positive Semidefinite Matrices},
  year      = {2010},
  issn      = {1095-7189},
  month     = jan,
  number    = {5},
  pages     = {2327--2351},
  volume    = {20},
  doi       = {10.1137/080731359},
  publisher = {Society for Industrial & Applied Mathematics (SIAM)},
}

@Article{bertsekas2014constrained,
  author    = {Bertsekas, Dimitri P.},
  journal   = {Computational Optimization and Applications},
  title     = {Proximal algorithms and temporal difference methods for solving fixed point problems},
  year      = {2018},
  issn      = {1573-2894},
  month     = mar,
  number    = {3},
  pages     = {709--736},
  volume    = {70},
  doi       = {10.1007/s10589-018-9990-5},
  publisher = {Springer Science and Business Media LLC},
}

@Article{wang2023decomposition,
  author    = {Wang, Yifei and Deng, Kangkang and Liu, Haoyang and Wen, Zaiwen},
  journal   = {SIAM Journal on Optimization},
  title     = {A Decomposition Augmented Lagrangian Method for Low-Rank Semidefinite Programming},
  year      = {2023},
  issn      = {1095-7189},
  month     = jul,
  number    = {3},
  pages     = {1361--1390},
  volume    = {33},
  doi       = {10.1137/22m1474539},
  publisher = {Society for Industrial & Applied Mathematics (SIAM)},
}

@Article{wang2025solving,
  author    = {Wang, Jie and Hu, Liangbing},
  journal   = {Journal of Scientific Computing},
  title     = {Solving Low-Rank Semidefinite Programs via Manifold Optimization},
  year      = {2025},
  issn      = {1573-7691},
  month     = may,
  number    = {1},
  pages     = {1--33},
  volume    = {104},
  doi       = {10.1007/s10915-025-02952-8},
  publisher = {Springer Science and Business Media LLC},
}

@Article{boyd2011distributed,
  author    = {Boyd, Stephen and Parikh, Neal and Chu, Eric and Peleato, Borja and Eckstein, Jonathan and others},
  journal   = {Foundations and Trends{\textregistered} in Machine learning},
  title     = {Distributed optimization and statistical learning via the alternating direction method of multipliers},
  year      = {2011},
  issn      = {0022-3239},
  number    = {1},
  pages     = {1--122},
  volume    = {3},
  doi       = {10.1007/s10957-016-0892-3},
  publisher = {Now Publishers, Inc.},
}

@Article{peres1996separability,
  author    = {Peres, Asher},
  journal   = {Physical Review Letters},
  title     = {Separability Criterion for Density Matrices},
  year      = {1996},
  issn      = {1079-7114},
  month     = aug,
  number    = {8},
  pages     = {1413--1415},
  volume    = {77},
  doi       = {10.1103/physrevlett.77.1413},
  publisher = {American Physical Society (APS)},
}

@article{Bourennane2004experimental,
  author =        {Bourennane, Mohamed and Eibl, Manfred and
                   Kurtsiefer, Christian and Gaertner, Sascha and
                   Weinfurter, Harald and G\"uhne, Otfried and
                   Hyllus, Philipp and Bru\ss{}, Dagmar and
                   Lewenstein, Maciej and Sanpera, Anna},
  journal =       {Phys. Rev. Lett.},
  month =         {Feb},
  pages =         {087902},
  publisher =     {American Physical Society},
  title =         {Experimental detection of multipartite entanglement
                   using witness operators},
  volume =        {92},
  year =          {2004},
  doi =           {10.1103/PhysRevLett.92.087902},
}

@Article{Pittenger2001convexity,
  author    = {Pittenger, Arthur O. and Rubin, Morton H.},
  journal   = {Linear Algebra and its Applications},
  title     = {Convexity and the separability problem of quantum mechanical density matrices},
  year      = {2002},
  issn      = {0024-3795},
  month     = may,
  number    = {1–3},
  pages     = {47--71},
  volume    = {346},
  doi       = {10.1016/s0024-3795(01)00524-9},
  publisher = {Elsevier BV},
}

@article{Bertlmann2002geometric,
  author =        {Bertlmann, R. A. and Narnhofer, H. and Thirring, W.},
  journal =       {Phys. Rev. A},
  month =         {Sep},
  pages =         {032319},
  publisher =     {American Physical Society},
  title =         {Geometric picture of entanglement and {B}ell
                   inequalities},
  volume =        {66},
  year =          {2002},
  doi =           {10.1103/PhysRevA.66.032319},
}

@article{Bertlmann2005optimal,
  author =        {Bertlmann, Reinhold A. and Durstberger, Katharina and
                   Hiesmayr, Beatrix C. and Krammer, Philipp},
  journal =       {Phys. Rev. A},
  month =         {Nov},
  pages =         {052331},
  publisher =     {American Physical Society},
  title =         {Optimal entanglement witnesses for qubits and
                   qutrits},
  volume =        {72},
  year =          {2005},
  doi =           {10.1103/PhysRevA.72.052331},
}

@Article{kortanek1993central,
  author    = {Kortanek, K. O. and No, Hoon},
  journal   = {SIAM Journal on Optimization},
  title     = {A Central Cutting Plane Algorithm for Convex Semi-Infinite Programming Problems},
  year      = {1993},
  issn      = {1095-7189},
  month     = nov,
  number    = {4},
  pages     = {901--918},
  volume    = {3},
  doi       = {10.1137/0803047},
  publisher = {Society for Industrial & Applied Mathematics (SIAM)},
}

@Article{bergmann2022manopt,
  author    = {Bergmann, Ronny},
  journal   = {Journal of Open Source Software},
  title     = {Manopt.jl: Optimization on Manifolds in Julia},
  year      = {2022},
  issn      = {2475-9066},
  month     = feb,
  number    = {70},
  pages     = {3866},
  volume    = {7},
  doi       = {10.21105/joss.03866},
  publisher = {The Open Journal},
}

@Article{boumal2016non,
  author    = {Boumal, Nicolas and Voroninski, Vladislav and Bandeira, Afonso S.},
  journal   = {Communications on Pure and Applied Mathematics},
  title     = {Deterministic Guarantees for Burer‐Monteiro Factorizations of Smooth Semidefinite Programs},
  year      = {2019},
  issn      = {1097-0312},
  month     = may,
  number    = {3},
  pages     = {581--608},
  volume    = {73},
  doi       = {10.1002/cpa.21830},
  publisher = {Wiley},
}

@article{tang2024feasible2,
  title={A feasible method for solving an SDP relaxation of the quadratic knapsack problem},
  author={Tang, Tianyun and Toh, Kim-Chuan},
  journal={Mathematics of Operations Research},
  volume={49},
  number={1},
  pages={19--39},
  year={2024},
  publisher={INFORMS}
}

@Article{tang2024feasible,
  author    = {Tang, Tianyun and Toh, Kim-Chuan},
  journal   = {SIAM Journal on Optimization},
  title     = {A Feasible Method for General Convex Low-Rank SDP Problems},
  year      = {2024},
  issn      = {1095-7189},
  month     = jul,
  number    = {3},
  pages     = {2169--2200},
  volume    = {34},
  doi       = {10.1137/23m1561464},
  publisher = {Society for Industrial & Applied Mathematics (SIAM)},
}

@InBook{deng2016global,
  author    = {Yan, Ming and Yin, Wotao},
  pages     = {165--194},
  publisher = {Springer International Publishing},
  title     = {Self Equivalence of the Alternating Direction Method of Multipliers},
  year      = {2016},
  isbn      = {9783319415895},
  volume    = {66},
  booktitle = {Splitting Methods in Communication, Imaging, Science, and Engineering},
  doi       = {10.1007/978-3-319-41589-5_5},
  issn      = {2198-2589},
  journal   = {Journal of Scientific Computing},
}

@Article{he20121,
  author    = {He, Bingsheng and Yuan, Xiaoming},
  journal   = {Numerische Mathematik},
  title     = {On non-ergodic convergence rate of Douglas–Rachford alternating direction method of multipliers},
  year      = {2014},
  issn      = {0945-3245},
  month     = nov,
  number    = {3},
  pages     = {567--577},
  volume    = {130},
  doi       = {10.1007/s00211-014-0673-6},
  publisher = {Springer Science and Business Media LLC},
}

@Article{absil2007trust,
  author    = {Baker, C. G. and Absil, P.-A. and Gallivan, K. A.},
  journal   = {IMA Journal of Numerical Analysis},
  title     = {An implicit trust-region method on Riemannian manifolds},
  year      = {2008},
  issn      = {1464-3642},
  month     = feb,
  number    = {4},
  pages     = {665--689},
  volume    = {28},
  doi       = {10.1093/imanum/drn029},
  publisher = {Oxford University Press (OUP)},
}

@Article{huang2015broyden,
  author    = {Huang, Wen and Gallivan, K. A. and Absil, P.-A.},
  journal   = {SIAM Journal on Optimization},
  title     = {A Broyden Class of Quasi-Newton Methods for Riemannian Optimization},
  year      = {2015},
  issn      = {1095-7189},
  month     = jan,
  number    = {3},
  pages     = {1660--1685},
  volume    = {25},
  doi       = {10.1137/140955483},
  publisher = {Society for Industrial & Applied Mathematics (SIAM)},
}

@Article{belotti2009branching,
  author    = {Belotti, Pietro and Lee, Jon and Liberti, Leo and Margot, François and Wächter, Andreas},
  journal   = {Optimization Methods and Software},
  title     = {Branching and bounds tighteningtechniques for non-convex MINLP},
  year      = {2009},
  issn      = {1029-4937},
  month     = oct,
  number    = {4–5},
  pages     = {597--634},
  volume    = {24},
  doi       = {10.1080/10556780903087124},
  publisher = {Informa UK Limited},
}

@Article{mccormick1976computability,
  author    = {McCormick, Garth P.},
  journal   = {Mathematical Programming},
  title     = {Computability of global solutions to factorable nonconvex programs: Part I — Convex underestimating problems},
  year      = {1976},
  issn      = {1436-4646},
  month     = dec,
  number    = {1},
  pages     = {147--175},
  volume    = {10},
  doi       = {10.1007/bf01580665},
  publisher = {Springer Science and Business Media LLC},
}

@Article{chen2017spatial,
  author    = {Chen, Chen and Atamtürk, Alper and Oren, Shmuel S.},
  journal   = {Mathematical Programming},
  title     = {A spatial branch-and-cut method for nonconvex QCQP with bounded complex variables},
  year      = {2016},
  issn      = {1436-4646},
  month     = dec,
  number    = {2},
  pages     = {549--577},
  volume    = {165},
  doi       = {10.1007/s10107-016-1095-2},
  publisher = {Springer Science and Business Media LLC},
}

@Article{greenberger1990bell,
  author    = {Greenberger, Daniel M. and Horne, Michael A. and Shimony, Abner and Zeilinger, Anton},
  journal   = {American Journal of Physics},
  title     = {Bell’s theorem without inequalities},
  year      = {1990},
  issn      = {1943-2909},
  month     = dec,
  number    = {12},
  pages     = {1131--1143},
  volume    = {58},
  doi       = {10.1119/1.16243},
  publisher = {American Association of Physics Teachers (AAPT)},
}

@article{Dur2000classification,
  author =        {D\"ur, W. and Cirac, J. I.},
  journal =       {Phys. Rev. A},
  month =         {Mar},
  pages =         {042314},
  publisher =     {American Physical Society},
  title =         {Classification of multiqubit mixed states:
                   Separability and distillability properties},
  volume =        {61},
  year =          {2000},
  doi =           {10.1103/PhysRevA.61.042314},
}

@article{Dicke1954coherence,
  author =        {Dicke, R. H.},
  journal =       {Phys. Rev.},
  month =         {Jan},
  pages =         {99--110},
  publisher =     {American Physical Society},
  title =         {Coherence in Spontaneous Radiation Processes},
  volume =        {93},
  year =          {1954},
  doi =           {10.1103/PhysRev.93.99},
}

@article{Guhne2008multiparticle,
  author =        {G\"uhne, O. and Bodoky, F. and Blaauboer, M.},
  journal =       {Phys. Rev. A},
  month =         {Dec},
  pages =         {060301},
  publisher =     {American Physical Society},
  title =         {Multiparticle entanglement under the influence of
                   decoherence},
  volume =        {78},
  year =          {2008},
  doi =           {10.1103/PhysRevA.78.060301},
}

@article{Briegel2001persistent,
  author =        {Briegel, Hans J. and Raussendorf, Robert},
  journal =       {Phys. Rev. Lett.},
  month =         {Jan},
  pages =         {910--913},
  publisher =     {American Physical Society},
  title =         {Persistent Entanglement in Arrays of Interacting
                   Particles},
  volume =        {86},
  year =          {2001},
  doi =           {10.1103/PhysRevLett.86.910},
}

@InBook{hiai2014introduction,
  author    = {Hiai, Fumio and Petz, Dénes},
  pages     = {137--185},
  publisher = {Springer International Publishing},
  title     = {Matrix Monotone Functions and Convexity},
  year      = {2014},
  isbn      = {9783319041506},
  booktitle = {Introduction to Matrix Analysis and Applications},
  doi       = {10.1007/978-3-319-04150-6_4},
  issn      = {2191-6675},
}

@article{thuerck2023learning,
  title={Learning cuts via enumeration oracles},
  author={Thuerck, Daniel and Sofranac, Boro and Pfetsch, Marc E and Pokutta, Sebastian},
  journal={Advances in Neural Information Processing Systems},
  volume={36},
  pages={79108--79123},
  year={2023}
}

@article{Eisert2010colloqium,
  title = {Colloquium: Area laws for the entanglement entropy},
  author = {Eisert, J. and Cramer, M. and Plenio, M. B.},
  journal = {Rev. Mod. Phys.},
  volume = {82},
  issue = {1},
  pages = {277--306},
  numpages = {0},
  year = {2010},
  month = {Feb},
  publisher = {American Physical Society},
  doi = {10.1103/RevModPhys.82.277},
}

@article{Nishioka2018Entanlgement,
  title = {Entanglement entropy: Holography and renormalization group},
  author = {Nishioka, Tatsuma},
  journal = {Rev. Mod. Phys.},
  volume = {90},
  issue = {3},
  pages = {035007},
  numpages = {56},
  year = {2018},
  month = {Sep},
  publisher = {American Physical Society},
  doi = {10.1103/RevModPhys.90.035007},
}

@Inbook{Vanderbeck2010,
author="Vanderbeck, Fran{\c{c}}ois
and Wolsey, Laurence A.",
editor="J{\"u}nger, Michael
and Liebling, Thomas M.
and Naddef, Denis
and Nemhauser, George L.
and Pulleyblank, William R.
and Reinelt, Gerhard
and Rinaldi, Giovanni
and Wolsey, Laurence A.",
title="Reformulation and Decomposition of Integer Programs",
bookTitle="50 Years of Integer Programming 1958-2008: From the Early Years to the State-of-the-Art",
year="2010",
publisher="Springer Berlin Heidelberg",
address="Berlin, Heidelberg",
pages="431--502",
doi="10.1007/978-3-540-68279-0_13",
}

@book{cgbook,
author = {Braun, Gábor and Carderera, Alejandro and Combettes, Cyrille W. and Hassani, Hamed and Karbasi, Amin and Mokhtari, Aryan and Pokutta, Sebastian},
title = {Conditional Gradient Methods},
publisher = {Society for Industrial and Applied Mathematics},
year = {2025},
doi = {10.1137/1.9781611978568},
address = {Philadelphia, PA},
edition   = {},
URL = {https://epubs.siam.org/doi/abs/10.1137/1.9781611978568},
}

@article{sturm2003cones,
  title={On cones of nonnegative quadratic functions},
  author={Sturm, Jos F and Zhang, Shuzhong},
  journal={Mathematics of Operations research},
volume = {28},
number = {2},
pages = {246-267},
year = {2003},
doi = {10.1287/moor.28.2.246.14485},
}

@article{sherali2007rlt,
  title={RLT: A unified approach for discrete and continuous nonconvex optimization},
  author={Sherali, Hanif D},
  journal={Annals of Operations Research},
  volume={149},
  number={1},
  pages={185},
  year={2007},
  publisher={Springer Nature BV}
}

\end{document}